\tikzstyle{edge}=[very thick]
\definecolor{bostonuniversityred}{rgb}{0.8, 0.0, 0.0}
\definecolor{arsenic}{rgb}{0.23, 0.27, 0.29}
\tikzstyle{diredge}=[postaction={decorate,decoration={markings,
\tikzset{
    arrow/.style={decoration={markings, mark=at position 0.7 with
    {\fill(-0.09*#1,-0.03*#1) -- (0,0) -- (-0.09*#1,0.03*#1) -- cycle;}}, postaction={decorate}},
    arrow/.default=1
}
\tikzset{
    arow/.style={decoration={markings, mark=at position 1 with
    {\fill(-0.09*#1,-0.03*#1) -- (0,0) -- (-0.09*#1,0.03*#1) -- cycle;}}, postaction={decorate}},
    arow/.default=1
}
\tikzset{
    arrrow/.style={decoration={markings, mark=at position 0.9 with
    {\fill(-0.09*#1,-0.03*#1) -- (0,0) -- (-0.09*#1,0.03*#1) -- cycle;}}, postaction={decorate}},
    arow/.default=1
}
\newcommand{\fitellipsis}[2] 
{\draw [fill=white]let \p1=(#1), \p2=(#2), \n1={atan2(\y2-\y1,\x2-\x1)}, \n2={veclen(\y2-\y1,\x2-\x1)}
    in ($ (\p1)!0.5!(\p2) $) ellipse [ x radius=\n2/2+0cm, y radius=1.1cm, rotate=\n1];
}
\newcommand{\Fitellipsis}[2] 
{\draw [fill=white]let \p1=(#1), \p2=(#2), \n1={atan2(\y2-\y1,\x2-\x1)}, \n2={veclen(\y2-\y1,\x2-\x1)}
    in ($ (\p1)!0.5!(\p2) $) ellipse [ x radius=\n2/2+0cm, y radius=1.4cm, rotate=\n1];
}
\theoremstyle{plain}
\newtheorem*{thm*}{Theorem}
\newtheorem{thm}{Theorem}[section]
\Crefname{thm}{Theorem}{Theorems}
\newtheorem*{lem*}{Lemma}
\newtheorem{lem}[thm]{Lemma}
\Crefname{lem}{Lemma}{Lemmas}
\newtheorem*{claim*}{Claim}
\newtheorem{claim}[thm]{Claim}
\crefname{claim}{Claim}{Claims}
\Crefname{claim}{Claim}{Claims}
\newtheorem{prop}[thm]{Proposition}
\Crefname{prop}{Proposition}{Propositions}
\Crefname{remar}{Remark}{Remarks}
\newtheorem{cor}[thm]{Corollary}
\crefname{cor}{Corollary}{Corollaries}
\newtheorem{conj}[thm]{Conjecture}
\crefname{conj}{Conjecture}{Conjectures}
\Crefname{qn}{Question}{Questions}
\Crefname{obs}{Observation}{Observations}
\Crefname{ex}{Example}{Examples}
\theoremstyle{definition}
\Crefname{prob}{Problem}{Problems}
\newtheorem{defn}[thm]{Definition}
\Crefname{defn}{Definition}{Definitions}
\theoremstyle{remark}
\renewenvironment{proof}[1][]{\begin{trivlist}
\item[\hspace{\labelsep}{\bf\noindent Proof#1.\/}] }{\qed\end{trivlist}}
\newcommand{\remove}[1]{}
\newcommand{\eps}{\varepsilon}
\renewcommand{\P}{\mathbb{P}}
\title{\vspace{-1 cm}
Short proofs of rainbow matching results}
\date{}
\author{
David Munh\'a Correia\thanks{
Department of Mathematics, ETH, Z\"urich, Switzerland. Research supported in part by SNSF grant 200021\_196965.
\newline
\emph{Emails}: \textbf{\{david.munhacanascorreia, benjamin.sudakov\}@math.ethz.ch}.
}
\and
Alexey Pokrovskiy\thanks{Department of Mathematics, University College London, Gower Street, London WC1E 6BT, UK. 
\newline 
\emph{Email}: \textbf{a.pokrovskiy@ucl.ac.uk}.}
\and
Benny Sudakov\footnotemark[1]}
\begin{document} 
\maketitle
\begin{abstract}
A subgraph of an edge-coloured graph is called rainbow if all its edges have distinct colours.
The study of rainbow subgraphs goes back 
to the work of Euler on Latin
squares and has been the focus of extensive research ever since. Many conjectures in this area roughly say that ``every edge coloured graph of a certain type contains a rainbow matching using every colour''. In this paper we introduce a versatile  ``sampling trick'', which allows us to obtain short proofs of old results as well as to solve asymptotically some well known conjectures.
\begin{itemize}
\item We give a simple proof of Pokrovskiy's  asymptotic version of the Aharoni-Berger conjecture with greatly improved error term.
\item We give the first asymptotic proof of the ``non-bipartite'' Aharoni-Berger conjecture, solving two conjectures of Aharoni, Berger, Chudnovsky and Zerbib.
\item We give a very short asymptotic proof of Grinblat's  conjecture (first obtained by Clemens, Ehrenm\"uller, and Pokrovskiy). Furthermore, we obtain a new asymptotically tight  
bound for Grinblat's problem as a function of edge multiplicity of the corresponding multigraph.
\item We give the first asymptotic proof of a 30 year old conjecture of Alspach.
\end{itemize}
\end{abstract}

\section{Introduction}\label{main}
Research regarding rainbow matchings in graphs dates back to the work of Euler on various problems about transversals in Latin squares. A Latin square of order $n$ is an $n \times n$ array filled with $n$ different symbols, where no symbol appears in the same row or column more than once. A transversal
in a Latin square of order $n$ is a set of $m$ entries such that no two entries are in the same row, same column, or have the same symbol. A transversal is said to be \textit{full} if $m = n$ and \textit{partial} otherwise. Despite the fact that not every Latin square contains a full transversal, it is plausible to ask whether every Latin square contains a large partial transversal. Indeed, the celebrated conjecture of Ryser, Brualdi and Stein states that every Latin square contains a transversal which uses all but at most one symbol.
\begin{conj} [Ryser-Brualdi-Stein~\cite{ryser1967neuere, brualdi1991combinatorial, stein1975transversals}]\label{ryser}

Every Latin square of order $n$ contains a transversal of size $n-1$.
\end{conj}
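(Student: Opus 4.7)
The plan is a two-phase attack: first obtain a near-full partial transversal using the sampling trick that powers the rest of the paper, then use an absorption argument to close the last $o(n)$ gap. Any purely probabilistic/sampling argument can only give $n-r$ for some $r$ that grows with $n$, so to reach the \emph{exact} bound $n-1$ one has to engineer, in advance, a reserve of entries capable of plugging any small deficiency.

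\textbf{Step 1: Reserve an absorber.} Before doing anything random, identify inside the Latin square $L$ a family $\mathcal{A}$ of vertex-disjoint short alternating cycles in the row/column/symbol tripartite multigraph (``absorber gadgets''), each of which can be rotated so as to swap one entry for another. A simple counting argument shows that for every triple (row $r$, column $c$, symbol $s$) compatible with $L$, there are $\Omega(n^2)$ gadgets witnessing ``$(r,c,s)$ can be made to lie on a partial transversal''. Greedily choose $\mathcal{A}$ sparse enough that its removal perturbs $L$ negligibly, but with many gadgets available for every potential defect.

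\textbf{Step 2: Sample on the complement.} Apply the sampling trick of the paper to $L \setminus \mathcal{A}$, verifying that deleting the $o(n)$ entries used by $\mathcal{A}$ does not spoil the concentration estimates. This gives a partial transversal $T$ missing at most $r$ rows, columns and symbols, where $r = o(n)$ in a straightforward implementation and as small as $O(\log^C n)$ with the sharpened error term the authors claim. \textbf{Step 3: Absorb.} For each uncovered row/column/symbol, rotate in an unused gadget from $\mathcal{A}$ to extend $T$ by one entry, arriving at a partial transversal of size $n-1$.

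The main obstacle is Step~3: the absorbers must function simultaneously for every possible small defect set, not one-at-a-time, and must stop cleanly at $n-1$ — for even-order Latin squares a full transversal genuinely need not exist, so any absorption that insists on absorbing the very last element will fail. Balancing these competing requirements (flexibility of $\mathcal{A}$ against rigidity at the parity boundary) is precisely where the Ryser--Brualdi--Stein conjecture has resisted all attacks in general; even Montgomery's recent proof for sufficiently large $n$ concentrates essentially all of its technical weight there. I would therefore expect an argument of this shape to settle the conjecture only for $n$ large, and the statement for small $n$ would have to be handled by a separate case analysis.
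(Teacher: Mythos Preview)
The statement you are attempting to prove is labelled as a \emph{conjecture} in the paper, and the paper does not prove it. The Ryser--Brualdi--Stein conjecture is cited only as motivation for the Aharoni--Berger conjecture and the other rainbow-matching problems that the paper actually addresses; there is no proof of Conjecture~\ref{ryser} anywhere in the text to compare your proposal against.

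As for the proposal itself: it is not a proof but a research plan, and you say as much in your final paragraph. Steps~1 and~2 are plausible in outline, but Step~3 is exactly the step that carries the entire content of the conjecture. Saying ``rotate in an unused gadget from $\mathcal{A}$'' hides the real problem: you have to build $\mathcal{A}$ so that \emph{any} residual defect set of size $r$ can be absorbed simultaneously, while $\mathcal{A}$ itself occupies only $o(n)$ rows/columns/symbols. Standard absorber constructions need the host structure to be quasirandom or at least to have some expansion property, and an arbitrary Latin square has neither. Your own remark that Montgomery's breakthrough ``concentrates essentially all of its technical weight there'' is an admission that Step~3 is not a step but the whole theorem. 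Nothing in the sampling trick of this paper helps with that step: the trick converts a weak asymptotic into a strong asymptotic, but both of those are statements with $o(n)$ error, and neither reaches the exact bound $n-1$.
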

There is a bijective correspondence between Latin squares of order $n$ and proper edge-colourings of the complete bipartite graph $K_{n,n}$ with $n$ colours. Indeed, let a Latin square $S$ have $\{1,2, \ldots, n\}$ as its set of symbols and let $S_{i,j}$ denote the symbol at the entry $(i,j)$. To $S$ we associate an edge-colouring of $K_{n,n}$ with the colours $\{1,2,\ldots,n\}$ by setting $V(K_{n,n}) = \{x_1, \ldots, x_n,y_1, \ldots, y_n\}$ and letting the edge between $x_i$ and $y_j$ receive colour $S_{i,j}$. Note that this colouring is proper, and moreover, each colour consists of a matching of size $n$. It is now easy to see that transversals of size $m$ in $S$ correspond to rainbow matchings of size $m$ in the coloured $K_{n,n}$. Therefore, the Ryser-Brualdi-Stein conjecture states - every properly edge-colouring of $K_{n,n}$ with $n$ colours has a rainbow matching of size $n-1$. 

The Ryser-Brualdi-Stein conjecture is just one thread of the research on rainbow matchings and rainbow subgraphs more broadly. There are many other interesting conjectures, some of them motivated by strengthening Ryser-Brualdi-Stein, others motivated by other branches of mathematics. In this paper we give improved results on a broad range of such conjectures. Very roughly, the prototypical problem that we study is of the form ``every $n$-edge-coloured graph of a certain type has a rainbow matching using every colour''. As an example, consider the following conjecture of Aharoni-Berger.
\begin{conj}[Aharoni and Berger,~\cite{aharoni2009rainbow}]\label{fullAB}
 Let $G$  be a properly edge-coloured bipartite multigraph with $n$ colours having at least $n+1$ edges of each colour. Then $G$ has a  rainbow matching using every colour. 
\end{conj}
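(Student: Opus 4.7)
The plan is to combine the ``sampling trick'' advertised in the abstract with a deterministic reserve-and-augment scheme, in the spirit of augmenting-path proofs of Hall-type theorems. Argue by contradiction: let $M$ be a maximum rainbow matching and suppose $|M|<n$, so there is a missing colour $c^{\star}$ with at least $n+1$ edges, enough of which (by pigeonhole and properness) must have both endpoints in $V(M)$ so that swapping a single edge is impossible.

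First, I would set aside one \emph{reserve} edge $r_c$ of each colour $c\in[n]$, which is possible because every colour has at least $n+1\ge 2$ edges. Write $G'=G\setminus\{r_1,\dots,r_n\}$; each colour still has at least $n$ edges in $G'$. Sample a random subset $C_1\subseteq[n]$ of colours of carefully chosen size (around $n/2$) and produce a rainbow matching $M_1\subseteq G'$ using precisely the colours of $C_1$; the sampling trick should give this with positive probability together with good expansion of the vertices left unmatched by $M_1$ into the remaining edges of $G'$.

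Second, introduce the remaining colours $C_2=[n]\setminus C_1$ into the matching one at a time. For each new colour $c\in C_2$, use its $\ge n$ edges in $G'$ to search for a short alternating augmenting path among vertices still unused by the current matching. Whenever no such path exists in $G'$, bring in the reserve edge $r_c$ together with the reserves of a small constant number of already-included colours to close the augmentation via a short alternating cycle, re-sampling the reserves in if necessary. The invariant to maintain is that the deficit (number of colours missing from the matching) strictly decreases at each step, and the reserves of any particular colour are used at most once.

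The main obstacle, and the reason the exact $n+1$ version of this conjecture is notoriously difficult, is calibrating the size of the reserve against the inefficiency of sampling. The sampling trick, as used in this paper and its predecessors, intrinsically loses $\omega(1)$ colours at the first stage, whereas the conjecture only allows a single spare edge per colour. Bridging this gap would require either a deterministic structural understanding of the obstructions that force augmentations to fail (so that reserves are invoked only $O(1)$ times \emph{in total}), or an iterated sampling-and-correction scheme of $\Theta(\log n)$ rounds, each halving the deficit while consuming only $O(1/\log n)$ reserve edges per colour on average. The hardest step will be the final round of augmentation, where the slack of a single extra edge per colour must be shown to suffice to clean up the last $o(n)$ missing colours---this is precisely the barrier the present paper falls short of overcoming, and I expect genuinely new ideas beyond the sampling trick will be needed here.
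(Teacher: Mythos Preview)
The statement you are attempting to prove is Conjecture~\ref{fullAB}, and it is \emph{open}: the paper does not prove it, nor does any other paper. What the paper proves is the asymptotic relaxation, Theorem~\ref{bipartiteABthm}, in which each colour has at least $n+n^{3/4}$ edges rather than $n+1$. Since the full conjecture implies (and is stronger than) the Ryser--Brualdi--Stein conjecture, a proof along the lines you sketch would be a major breakthrough.

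Your proposal is not a proof, and in your final paragraph you essentially concede this yourself. Several of the earlier steps are also not well-defined. In the first stage you say ``produce a rainbow matching $M_1\subseteq G'$ using precisely the colours of $C_1$''; but finding a full rainbow matching on a subset of colours, each of which has about $n$ edges while $|C_1|\approx n/2$, is already a non-trivial instance of (a variant of) the very conjecture you are trying to prove. The sampling trick in the paper works the other way round: one samples \emph{vertices}, not colours, applies a known \emph{weak} asymptotic (Proposition~\ref{baratbipAB}) on the large side to get a matching missing $O(\sqrt n)$ colours, and then greedily completes on the small side because each missing colour still has many more edges there than the number of missing colours. That greedy completion requires $\Omega(\sqrt n)$ spare edges per colour, which is exactly why the method yields $n+n^{3/4}$ and cannot reach $n+1$.

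Your augmenting/reserve scheme in the second stage also has a concrete gap: a single reserve edge $r_c$ per colour cannot in general close an augmentation, because $r_c$ will typically have both endpoints already covered by the current matching, and the ``short alternating cycle'' you appeal to need not exist in a multigraph with no degree or multiplicity control. The iterated $\Theta(\log n)$-round idea you float would need each round to produce a full rainbow matching on its share of colours with only $O(1)$ slack, which is again the unproved conjecture on a smaller scale. In short, the obstacle you identify in your last paragraph is genuine and is not overcome by anything earlier in the proposal.
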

The motivation for this conjecture is the Ryser-Brualdi-Stein conjecture which it strengthens (to see this, consider a properly coloured $K_{n,n}$ as in Conjecture~\ref{fullAB}; delete one colour to obtain a graph satisfying the Aharoni-Berger conjecture). Given the difficulty of the Ryser-Brualdi-Stein conjecture, much of the effort has been put into proving asymptotic versions of Conjecture~\ref{fullAB}. There are two natural approaches one can take in proving weakenings of this conjecture, which we will refer to as a \emph{weak asymptotic} and a \emph{strong asymptotic}.

The weak asymptotic asks for rainbow matchings which \emph{uses nearly all colours}.
\begin{quote}\textbf{Weak asymptotic:}  Let $G$  be a properly edge-coloured bipartite multigraph with $n$ colours having at least $n+1$ edges of each colour. Then $G$ has a  rainbow matching of size $n-o(n)$.  
\end{quote}
A weak asymptotic version of the Aharoni-Berger conjecture was proved by Barat-Gy\'arf\'as-Sarkozy who prove the above with error term $o(n)=\sqrt n$.  Their proof was very short and elegant, using the method developed by Woolbright for his result (see \cite{woolbright1978n}) on the Ryser-Brualdi-Stein conjecture. 

Having obtained the weak asymptotic, we would now like to improve the error term, preferably to $o(n)=0$, at which point the conjecture would be proven. Somewhat surprisingly, there has since been no improvement to the error term in Barat-Gy\'arf\'as-Sarkozy's result --- despite close ties to the Ryser-Bruldi-Stein conjecture, none of the progress on that conjecture generalises to the Aharoni-Berger multigraph setting.

Another direction is to prove \emph{qualitatively stronger} asymptotic results. For us ``strong asymptotic'' will mean a result of the following type, which guarantees matchings \emph{using all the colours in the graph}, at the cost of having slightly more edges of each colour. We will usually say that rainbow matchings using all the available colours are \emph{full}.
\begin{quote}
\textbf{Strong asymptotic:}  Let $G$  be a properly edge-coloured bipartite multigraph with $n$ colours having at least $n+o(n)$ edges of each colour. Then $G$ has a  rainbow matching using every colour.  
\end{quote} 
The reason we call the above statement as a ``strong'' asymptotic, is that it implies the previously mentioned weak asymptotic. Indeed suppose we have a properly edge-coloured bipartite multigraph $G$ with $n$ colours having at least $n+1$ edges of each colour. Delete $o(n)$ colours in order to obtain a new graph $G'$ with $n'=n-o(n)$ colours and each colour having $n'+o(n)+1$ edges. The strong asymptotic applies to this to give a rainbow matching using every colour. This gives a rainbow matching of size $n'=n-o(n)$ in the original graph. Moreover, note that we can choose which $o(n)$ colours we want to miss. This simple argument shows that the ``strong asymptotic with error term $o(n)$ implies the weak asymptotic with error term $o(n)$. 

It was believed that the strong asymptotic is fundamentally more difficult than the weak one. Indeed, it took much longer for the strong asymptotic to be proved, and the proof methods involved were considerably more difficult. It is easy to see that if there are $2n$ edges of each colour has a rainbow matching of size $n$. Indeed, if the largest matching $M$ in such a graph had size $\leq n-1$, then one of the $2n$ edges of the unused colour would be disjoint from $M$, and we could get a larger matching by adding it.
This simple bound has been successively improved by many authors.
Aharoni, Charbit, and Howard \cite{aharoni2015generalization} proved first that matchings of size $\lfloor 7n/4\rfloor$ are sufficient to guarantee a rainbow matching of size $n$. Kotlar and Ziv \cite{kotlar2014large} improved this to $\lfloor 5n/3\rfloor$. 
The third author then proved that $\phi n+o(n)$ is sufficient, where $\phi\approx 1.618$ is the Golden Ratio~\cite{pokrovskiy2015rainbow}.
Clemens and Ehrenm\"uller \cite{clemens2015improved}  showed that $3n/2+o(n)$ is sufficient. Aharoni, Kotlar, and Ziv \cite{aharoni2017representation}  showed that having $3n/2+1$ edges of each colour in an $n$-edge-coloured bipartite multigraph guarantees a rainbow matching of size  $n$. Finally, the strong asymptotic, as stated above, was proved by the third author in~\cite{pokrovskiy2018approximate}. This proof was much longer and more difficult than Barat-Gy\'arf\'as-Sarkozy's proof of the weak asymptotic. It also gave a considerably weaker error term.
 
Now, we've already seen that ``if the strong asymptotic is true, then the weak asymptotic is true''.
The main idea of this paper is a very short trick, that we call ``the sampling trick'', which allows one to prove the converse statement. This trick will allow us to prove results like ``suppose the weak asymptotic is true with $o(n)=n/f(n)$; then the strong asymptotic is true with $o(n)=3n/\sqrt{f(n)}$''.
Combining this with the Barat-Gy\'arf\'as-Sarkozy result, we obtain the strong asymptotic version of the Aharoni-Berger conjecture with a much improved error term.
\begin{thm}
\label{bipartiteABthm}
Let $G$  be a properly edge-coloured bipartite multigraph with $n$ colours having at least $n+n^{3/4}$ edges of each colour. Then $G$ has a  rainbow matching using every colour. 
\end{thm}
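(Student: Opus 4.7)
The plan is to exemplify the ``sampling trick'' announced in the introduction, upgrading the Bar\'at--Gy\'arf\'as--S\'ark\"ozy weak asymptotic (a rainbow matching of size $n - O(\sqrt{n})$ whenever each colour has $n+1$ edges) to a strong asymptotic. The extra $n^{3/4}$ edges per colour in the hypothesis should act as slack that allows the $O(\sqrt{n})$ colours missed by BGS to be recovered via reserve edges.

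First, I would split the edges of each colour class randomly into a \emph{primary} part of $n+1$ edges and a \emph{reserve} part of $n^{3/4}-1$ edges. Applying BGS to the primary subgraph yields a rainbow matching $M$ of size $n-O(\sqrt{n})$ with a set $U$ of at most $O(\sqrt{n})$ uncovered colours. Because the primary/reserve split was made independently of the BGS algorithm, the reserve edges are essentially decorrelated from $V(M)$.

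The key step is to extend $M$ to cover $U$ using reserve edges. For each $c \in U$, the $n+n^{3/4}$ edges of $c$ form a matching spanning at least $2(n+n^{3/4})$ vertices of $G$, so a first-moment calculation shows that an expected $\Omega(n^{1/4})$ of $c$'s $n^{3/4}$ reserve edges avoid $V(M)$. Concentration, together with a union bound over the $O(\sqrt{n})$ colours in $U$, then guarantees a realization of the random split on which every $c \in U$ has plenty of disjoint reserve edges available in $G[V \setminus V(M)]$.

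The last step is to pack these available edges into a rainbow matching on $U$ disjoint from $M$---either by a second application of the weak asymptotic to this much smaller residual instance (after checking its hypothesis) or by a direct greedy argument. Concatenating with $M$ produces a rainbow matching using every colour of $G$. The main obstacle will be the joint probabilistic bookkeeping across $U$, combined with calibrating the primary/reserve split so that the BGS hypothesis holds on the primary subgraph while the reserve pool is large enough to yield the $\Omega(n^{1/4})$ ``dilution factor'' needed; the threshold $n^{3/4}$ in the theorem emerges as the natural balance point between BGS's intrinsic error $\sqrt{n}$ and the reserve dilution.
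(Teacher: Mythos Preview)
Your plan has the right shape but the ``decorrelation'' step fails, and this is not a technical wrinkle but a genuine obstruction. After applying Bar\'at--Gy\'arf\'as--S\'ark\"ozy to the primary subgraph you obtain a rainbow matching $M$ of size $n-O(\sqrt n)$, so $|V(M)|\approx 2n$. Now fix a colour $c\in U$. Its colour class is a matching of $n+n^{3/4}$ edges; since it is a matching, a single vertex of $V(M)$ can meet (and thus destroy) at most one $c$-edge, but $|V(M)|\approx 2n$ is already much larger than the total number of $c$-edges. Hence there is no lower bound at all on the number of $c$-edges that avoid $V(M)$: the output of BGS could place $V(M)$ so that \emph{every} $c$-edge is hit in at least one endpoint. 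In that case zero reserve edges of $c$ avoid $V(M)$, regardless of how the primary/reserve split was made. Your first-moment claim ``an expected $\Omega(n^{1/4})$ reserve edges avoid $V(M)$'' therefore has no basis; the excess $2n^{3/4}$ in the vertex count of the $c$-matching over $|V(M)|$ does not translate into any surplus of \emph{whole} $c$-edges outside $V(M)$. Splitting edges cannot manufacture vertex-disjointness from $M$.

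The paper's sampling trick avoids exactly this problem by sampling \emph{vertices} rather than edges. One picks a random set $S\subseteq V(G)$ by including each vertex independently with probability $p=2n^{-1/4}$, and shows via Chernoff and a union bound that with positive probability every colour $c$ has at least $n$ edges inside $G-S$ and at least $3\sqrt n$ edges inside $G[S]$. Then BGS is applied to $G-S$, so the resulting matching $M$ is vertex-disjoint from $S$ \emph{by construction}; the $3\sqrt n>2|C_0|$ edges of each missing colour lying in $G[S]$ are therefore automatically disjoint from $V(M)$, and a trivial greedy extension finishes. The point is that reserving vertices, not edges, is what guarantees the reserve edges can be used.
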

\noindent
As mentioned before, the original proof of the strong asymptotic was quite involved and the corresponding paper was more than 40 pages long. Our approach, in addition to giving a polynomial error term, vastly simplifies it (a full proof will now take less than two pages).

Our ``sampling trick'' is very versatile and applies to many other problems and conjectures. In all our applications, it allows us to either prove a strong asymptotic for the first time, or to greatly simplify an existing proof of the strong asymptotic.
\subsection*{Non-bipartite Aharoni-Berger}
Since the result of Pokrovskiy, several recent papers have considered variants and extensions of the Aharoni-Berger conjecture. Notably, it is natural to ask what happens when we no longer require $G$ to be bipartite in Conjecture \ref{fullAB}. 
\begin{conj} [Gao et. al, \cite{gao2017full}]\label{generalfullAB}
Let $G$ be an edge-coloured multigraph with $n$ colours such that each colour class is a matching of size $n+2$. Then, $G$ contains a rainbow matching of size $n$.
\end{conj}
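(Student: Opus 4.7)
The plan is to proceed by contradiction using a maximum rainbow matching. Let $M=\{e_1,\dots,e_k\}$ be such a matching with $k\le n-1$, where $e_i$ carries colour $c_i$, and let $U=\{c_{k+1},\dots,c_n\}$ be the set of missing colours. Maximality forces every edge of every colour $c\in U$ to meet $V(M)$, and because each colour class is itself a matching, at most $|V(M)|=2k$ edges of colour $c$ can do so; hence $n+2\le 2k$ already yields $k\ge (n+2)/2$ and in particular $|U|\le (n-2)/2$. For any $f=uv$ of colour $c\in U$ with $u\in e_i$ and $v\in e_j$, the swap that removes $e_i,e_j$ and inserts $f$ releases two colours $c_i,c_j$; if one can then realise $c_i$ and $c_j$ by vertex-disjoint edges avoiding the updated partial matching, the rainbow matching grows to size $k+1$, contradicting maximality.

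To actually close the argument, I would set up an auxiliary ``exchange'' (hyper)graph on the set of colours whose hyperedges encode admissible double-swaps of the above form, and look inside it for a suitable alternating structure that simultaneously reassigns every colour of $U$. In the bipartite setting of the original Aharoni-Berger conjecture this exchange graph is essentially bipartite and a Hall-type argument suffices, which is what underlies the ``$+1$'' surplus there. In the non-bipartite case one instead needs a Tutte-type analysis, because odd components in the exchange graph create genuine obstructions that must be discharged. As a concrete first step giving the asymptotic form of the conjecture, the sampling trick of this paper can be plugged in: first establish a weak non-bipartite asymptotic (a rainbow matching of size $n-O(\sqrt n)$ from $n+2$ edges per colour) by a Woolbright-style iterative augmentation, then sample $\Theta(\sqrt n)$ colours uniformly at random, apply the weak asymptotic on the unsampled colours, and stitch in a rainbow matching on the sampled colours to cover every colour whenever the surplus exceeds $\omega(\sqrt n)$.

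The central obstacle, and the reason the conjecture remains open as stated, is bridging the $o(n)$ surplus inherent in the sampling trick down to the sharp constant surplus $+2$. The jump from $+1$ in the bipartite case to $+2$ here is explained precisely by odd-cycle obstructions in the exchange graph: one can build small configurations in which no single augmenting swap exists and only a careful ``double surgery'', guided by a monotone potential that keeps the swaps vertex-disjoint and rainbow-legal across iterations, yields a valid augmentation. This local structural analysis lies outside the scope of the sampling trick, and so I expect the trick to deliver only the asymptotic version of Conjecture~\ref{generalfullAB}; reaching the exact constant $n+2$ will plausibly require genuinely non-bipartite ideas that classify and resolve the odd extremal configurations directly.
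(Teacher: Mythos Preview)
This statement is a \emph{conjecture}; the paper does not prove it, and you correctly recognise that the exact $+2$ surplus is out of reach of the sampling trick. What the paper actually establishes is the strong asymptotic version (Theorem~\ref{generalABthm}): a full rainbow matching exists once each colour class has size $n+20n^{15/16}$. So there is no ``paper's own proof'' of the conjecture to compare against, and your final paragraph is an accurate assessment of the situation.

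That said, your sketch of how the asymptotic would go contains two substantive inaccuracies relative to what the paper does. First, the sampling trick samples \emph{vertices}, not colours: one puts each vertex into a reservoir $S$ independently with some small probability $p$, finds a near-full rainbow matching in $G-S$ via the weak asymptotic, and then greedily completes the missing colours inside $G[S]$. Sampling colours would not help, since the point is to separate the vertex set so that the greedy completion is vertex-disjoint from the near-full matching. Second, and more seriously, the weak non-bipartite asymptotic is \emph{not} obtained by a short Woolbright-style augmentation giving $n-O(\sqrt n)$. The bipartite Woolbright argument (Proposition~\ref{baratbipAB}) relies on alternating paths whose $C_0$-edges cross the bipartition and therefore cannot close into an odd cycle; in the non-bipartite setting this breaks down, and the paper instead develops a substantially more involved ``switchable vertex'' argument (Theorem~\ref{weaknonbipAB}) that only achieves $n-20n^{7/8}$. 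This weaker error term is precisely why the strong asymptotic ends up at $n+20n^{15/16}$ rather than $n+O(n^{3/4})$ as in the bipartite case.
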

Note that in this more general case, we require $n+2$ edges in each colour class. This can be seen by the simple example of a proper $3$-edge-colouring of the disjoint union of two $K_4$'s. Contrary to Conjecture \ref{fullAB}, both the strong and weak asymptotics are unsolved here. Despite this, there are several recent results relating to this problem which place an additional restriction on the edge-multiplicity of the graph. Keevash and Yepremyan~\cite{keevash2018rainbow} showed that for every function $k = \omega(1)$, there is a $\eps = o(1)$ such that if each colour has at least $(1+\eps)n$ edges and $G$ has edge-multiplicity at most $n/k$, then it contains a rainbow matching of size $n-k$ (this is a weak asymptotic version of the above conjecture with an additional multiplicity assumption). Similarly, Gao, Ramadurai, Wanless and Wormald~\cite{gao2017full} proved that if the edge-multiplicity is at most $\frac{\sqrt{n}}{\log^2 n}$, there is $\eps = o(1)$ such that each colour class being of size at least $(1+\eps)n$, implies that there is a full rainbow matching (this is a strong asymptotic version of the conjecture with an additional multiplicity assumption). 

Finally, a recent result of Aharoni, Berger, Chudnovsky and Zerbib~\cite{aharoni2019large} states that if each colour class has $n$ edges, there is a rainbow matching of size $\frac{2}{3}n-1$. They also explicitly conjectured both the strong and weak asymptotic versions of Conjecture~\ref{generalfullAB}. In this paper we prove both of these.

\begin{thm}\label{generalABthm}
For all sufficiently large $n$, any $n$-edge-coloured multigraph such that each colour class is a matching of size at least $n + 20n^{1-1/16}$ contains a full rainbow matching.
\end{thm}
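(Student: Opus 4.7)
The plan is to mirror the strategy flagged in the introduction for the bipartite case: first establish a \emph{weak asymptotic} (a rainbow matching using nearly all colours) for non-bipartite multigraphs, and then invoke the ``sampling trick'' to upgrade it to a strong asymptotic with a polynomial error term. Since the bipartite proof of Theorem~\ref{bipartiteABthm} fed Barat--Gy\'arf\'as--S\'arkozy's weak asymptotic into the sampling trick, the difference here is that no weak asymptotic is yet available in the non-bipartite setting, so we must prove one ourselves.

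\textbf{Step 1: A non-bipartite weak asymptotic.} I would prove that any $n$-edge-coloured multigraph in which every colour class is a matching of size at least $n+1$ contains a rainbow matching of size $n-O(n^{7/8})$. The strategy adapts the Woolbright-style switching argument used in the bipartite case: start with a maximum rainbow matching $M$, and for each missing colour exhibit either an edge disjoint from $V(M)$ (immediate contradiction to maximality) or an alternating configuration on $V(M)$ that would allow augmentation. Since each colour class is a matching, each vertex in $V(M)$ is incident to at most one edge of any missing colour, so we can count these potential switches and show that if $|M|<n-cn^{7/8}$ the number of augmenting configurations forces two to combine into a genuine augmentation, contradicting maximality. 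The non-bipartite setting complicates the bipartite argument only through the possibility of alternating walks closing into odd cycles, which can be controlled by a careful parity/entropy count on the number of alternating triples encountered.

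\textbf{Step 2: Sampling trick.} With a weak asymptotic having error term $O(n/f(n))$ for $f(n)\asymp n^{1/8}$, I would apply the sampling scheme announced in the introduction. Reserve a small set $R$ of colours of size $\asymp n^{15/16}$, and for each remaining colour independently subsample its edges with a probability $p$ chosen so that each sampled colour still has at least $|C|/f(|C|)$ edges in expectation, and standard concentration (Chernoff together with a union bound over colours) shows the same bound holds for all sampled colours with high probability. The weak asymptotic applied to the sampled subgraph yields a rainbow matching $M_0$ missing only $O(n^{15/16})$ colours. We finish by greedily extending $M_0$ on the remaining colours: the edges discarded during sampling, together with the edges in the reserve colours $R$, guarantee that each missing colour still has many edges avoiding $V(M_0)$, and the greedy extension succeeds provided the parameters are balanced so that $3n/\sqrt{f(n)}\asymp n^{15/16}$, which matches the error in Step~1 and yields the advertised $n+20n^{1-1/16}$ threshold.

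\textbf{Main obstacle.} The delicate step is Step~1: the non-bipartite weak asymptotic is new, and obtaining a \emph{polynomial} error term (not just $o(n)$) is essential, because the sampling trick in Step~2 transforms a weak error $n/f(n)$ into a strong error $\asymp n/\sqrt{f(n)}$, so a sub-polynomial $f(n)$ would not produce the quantitative bound claimed. The alternating-walk analysis in non-bipartite multigraphs must therefore be pushed to a polynomial savings, which requires tracking how alternating structures interact through odd closures --- a complication absent in the Barat--Gy\'arf\'as--S\'arkozy bipartite argument. Once Step~1 is in place, Step~2 is essentially identical to the sampling argument underlying Theorem~\ref{bipartiteABthm}.
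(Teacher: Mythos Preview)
Your overall plan---prove a non-bipartite weak asymptotic with polynomial error, then upgrade via the sampling trick---is exactly the paper's strategy, and your Step~1 target of an $O(n^{7/8})$ deficit matches the paper's Theorem~\ref{weaknonbipAB} precisely. (The paper's proof of that step is not a direct Woolbright count but a more involved switching scheme with iterated layers of ``switchable'' vertices and a heavy/light edge dichotomy; your sketch is too vague to evaluate, but the target is right and this is indeed the hard part.)

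Step~2, however, is not the paper's sampling trick, and as described it does not work. The paper samples \emph{vertices}: pick each vertex into a set $S$ independently with probability $p=7n^{-1/16}$, apply the weak asymptotic inside $G\setminus S$ (where each colour still has at least $n$ edges), and then greedily complete the $O(n^{7/8})$ missing colours inside $G[S]$ (where each colour has $\asymp p^2n\asymp n^{7/8}$ edges). The whole point is the geometric separation: $M_0\subseteq G\setminus S$ while the completion edges live in $G[S]$, so they are \emph{automatically} vertex-disjoint from $M_0$. Your scheme instead reserves colours and subsamples edges. But neither the discarded edges of a colour nor the edges of a reserved colour have any reason to avoid $V(M_0)$: each colour is a matching of size roughly $n$ while $|V(M_0)|\approx 2n$, so it is perfectly possible that every edge of a given colour meets $V(M_0)$, and random edge-thinning does nothing to prevent this. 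Your parameters are also inconsistent: if you keep only $|C|/f(|C|)\asymp |C|^{7/8}$ edges per colour you cannot apply the weak asymptotic at all (it needs at least $|C|$ edges per colour), and the weak asymptotic on $n'\approx n$ colours misses $O(n^{7/8})$ of them, not $O(n^{15/16})$. The exponent $15/16$ in the theorem arises from balancing $p$ so that $p^2n\gtrsim n^{7/8}$ (enough room for the greedy finish in $G[S]$) against $(1-p)^2(n+20n^{15/16})\geq n$ (so the weak asymptotic applies in $G\setminus S$). Replace your Step~2 with vertex sampling and the argument goes through exactly as for Theorem~\ref{bipartiteABthm}.
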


\subsection*{Grinblat's Problem}
Another interesting problem involving rainbow matchings which has been studied is the following. Let an \textit{$(n,v)$-multigraph} be an $n$-edge-coloured multigraph in which the edges of each colour span a disjoint union of non-trivial cliques that have in total at least $v$ vertices. These can be seen as a generalisation of the type of edge-coloured multigraphs we mentioned before. In fact, note that Conjecture \ref{fullAB} is equivalent to the statement that every bipartite $(n,2n+2)$-multigraph contains a rainbow matching of size $n$. The question raised by Grinblat, which was originally made in the context of his study on algebras of sets but has recently been considered as a graph-theoretic problem, is to determine the minimal $v = v(n)$ such that every $(n,v)$-multigraph contains a rainbow matching of size $n$. He conjectured the following.
\begin{conj} [Grinblat, \cite{grinblat2002algebras}]\label{grinblat}
For all $n \geq 4$, $v(n) = 3n-2$.
\end{conj}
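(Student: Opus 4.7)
The plan is to prove matching lower and upper bounds, the harder direction being $v(n) \leq 3n-2$. For the lower bound $v(n) \geq 3n-2$, a classical construction produces an $(n, 3n-3)$-multigraph with no rainbow matching of size $n$: one arranges the cliques of the $n$ colour classes so that every attempted rainbow matching of size $n$ is forced to re-use a vertex. The construction is standard and does not require the sampling machinery developed in this paper.

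For the upper bound $v(n) \leq 3n-2$ I would proceed in two stages. The first stage applies the sampling trick promised in the introduction to obtain the asymptotic bound $v(n) \leq 3n + o(n)$, reproducing the Clemens--Ehrenm\"uller--Pokrovskiy result in a much shorter way. Concretely, the weak asymptotic for Grinblat's problem (every $(n,3n)$-multigraph contains a rainbow matching of size $n-o(n)$) is obtainable by a Woolbright-style augmenting argument adapted to unions of cliques. Given an $(n, 3n + f(n))$-multigraph $G$, one samples a random set $S$ of colours of size roughly $n/\sqrt{f(n)}$, applies the weak asymptotic to the submultigraph obtained by deleting the colours in $S$ to produce a rainbow matching of size $n-|S|-o(n)$, and then argues that with positive probability each colour of $S$ retains many cliques disjoint from this matching, so that a greedy extension completes it to a full rainbow matching.

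The second stage is to close the remaining $o(n)$ gap to the exact value $3n-2$. I would induct on $n$, verifying small cases by hand. In the inductive step, given an $(n, 3n-2)$-multigraph $G$, split on structure: if some colour class contains a clique of size at least three, pick an appropriate vertex/edge to remove and reduce to an $(n-1, 3(n-1)-2)$-instance, then apply induction. If every colour class is a matching, combine the near-full rainbow matching produced by the first stage with a stability analysis to force $G$ to contain the tight lower-bound configuration, producing a contradiction. The main obstacle will be precisely this second stage: the sampling trick unavoidably loses a polynomial term in the number of colours, and extracting the exact constant $3n-2$ requires a delicate stability classification of all $(n, 3n-2)$-multigraphs that come within $o(n)$ of being extremal. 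This is the same barrier that separates Theorem~\ref{bipartiteABthm} from the full Aharoni--Berger conjecture, and I expect it to be the dominant difficulty in proving the exact Grinblat conjecture.
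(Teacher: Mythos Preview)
The statement you are attempting to prove is a \emph{conjecture} in the paper, not a theorem: the paper does not contain a proof of it, and indeed the exact bound $v(n)=3n-2$ remains open. What the paper actually proves is Theorem~\ref{grinblatthm}, the asymptotic statement $v(n)=3n+O(n^{3/4})$, and the associated lower bound construction (a disjoint union of $n-1$ triangles each repeated in all colours). Your ``second stage'' is therefore not something the paper accomplishes; you yourself flag it as the dominant difficulty, and your sketch (induction plus stability for near-extremal configurations) is not a proof but a wish list. The barrier you identify is real and is precisely why the conjecture is still open.

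Regarding your first stage, your description of the sampling trick is not what the paper does. The paper samples \emph{vertices}, not colours: one picks $S\subseteq V(G)$ by including each vertex independently with probability $p\approx n^{-1/4}$, shows by Chernoff that every colour retains at least $3n$ clique-vertices in $G-S$ and at least $\Theta(\sqrt n)$ in $G[S]$, applies the weak asymptotic (Proposition~\ref{claimgrinblat}) inside $G-S$ to get a rainbow matching missing at most $\sqrt n$ colours, and then greedily completes inside $G[S]$. Your variant of sampling colours and deleting them is the trivial ``strong implies weak'' direction run in reverse; as written it does not explain why the matching found on $n-|S|$ colours leaves enough room for the $|S|$ reserved colours, since nothing prevents that matching from occupying vertices used heavily by the reserved colour classes. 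The vertex-sampling is what guarantees a clean reserved region $G[S]$ disjoint from the large matching.
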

\noindent Indeed, note that a lower bound of $v(n) > 3n-3$ occurs because we can take a disjoint union of $n-1$ triangles, each repeated in every one of the $n$ colours. This multigraph has no matching of size $n$. It is worth observing here that Conjecture \ref{grinblat} differs from the Aharoni-Berger problem in the single fact that we allow each colour class to be a disjoint union of cliques which can now be larger than just an edge. In fact, it is easy to note that one can reduce the problem to when each monochromatic clique is either a $K_2$ or a $K_3$. Conjecture \ref{generalfullAB} states that if each clique is a $K_2$, then $2n+4$ vertices in each colour class are sufficient to guarantee a full rainbow matching. As demonstrated by the above example, this changes substantially when we allow monochromatic triangles.

In terms of results towards Grinblat's problem, all effort has gone into proving the strong asymptotic version of it. Much like in the Aharoni and Berger problem, an easy greedy argument gives an upper bound on $v(n)$, namely of $v(n) \leq 4n$. Grinblat~\cite{grinblat2015families} showed that $v(n) \leq 10n/3 + o(n)$. Nivasch and Omri~\cite{nivasch2017rainbow} showed that $v(n) \leq \frac{16}{5}n + O(1)$ and later, Clemens, Ehrenm\"{u}ller and Pokrovskiy~\cite{clemens2017sets} proved the strong asymptotic version of Conjecture \ref{grinblat} by showing that $\nu(n) = 3n+O \left(\sqrt{n}\right)$. For this problem we give a one-paragraph argument proving the weak asymptotic and use again the sampling trick to establish the strong asymptotic version of
 Conjecture \ref{grinblat}, which despite not improving upon the bound from~\cite{clemens2017sets}, will only take a page. 
\begin{thm}\label{grinblatthm}
$v(n) = 3n+O \left(n^{3/4}\right)$.
\end{thm}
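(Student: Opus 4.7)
The plan is to follow the paper's template: prove a weak asymptotic version of Grinblat's conjecture with an $O(\sqrt n)$ error, and then invoke the sampling trick to upgrade it to the strong asymptotic of Theorem~\ref{grinblatthm}. As a preprocessing step I would use the standard reduction that every monochromatic component may be assumed to be a $K_2$ or a $K_3$: a larger clique of size $r\geq 4$ can be split into $\lfloor r/3\rfloor$ triangles and at most one edge, which preserves the colour class's vertex count and cannot enlarge any rainbow matching we might find.

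For the weak asymptotic I would prove that every $(n,3n)$-multigraph contains a rainbow matching of size $n-O(\sqrt n)$. Take a maximum rainbow matching $M$ of size $n-t$, and let $U$ be the $t$ uncovered colours. For each $c\in U$, the cliques of colour $c$ span at least $3n$ vertices while $|V(M)|=2(n-t)\leq 2n$, so many vertices of colour $c$ lie outside $V(M)$. Since $M$ is maximum, no clique of colour $c$ can have two vertices outside $V(M)$ (otherwise the corresponding free edge extends $M$). A double-counting of the incidences between $V(M)$ and the cliques of the colours in $U$, in the style of Woolbright and of Bar\'at--Gy\'arf\'as--S\'ark\"ozy, then shows that when $t\gg \sqrt n$ some pair of colours $c,c'\in U$ together with two edges of $M$ admits a local swap producing a rainbow matching of size $n-t+1$, contradicting maximality.

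To pass to the strong asymptotic, let $G$ be an $(n, 3n+Cn^{3/4})$-multigraph for a suitable constant $C$. I would sample a random sub-multigraph $G'$ by keeping each monochromatic clique independently with probability $1-p$, where $p=\Theta(n^{-1/4})$. With high probability the vertex count of each colour in $G'$ drops by only $O(n^{3/4})$, so $G'$ is still an $(n,3n)$-multigraph, and the weak asymptotic supplies a rainbow matching $M'$ in $G'$ of size $n-O(\sqrt n)$, missing a set $U$ of at most $O(\sqrt n)$ colours. Each missed colour $c\in U$ has in expectation $\Theta(n^{3/4})$ edges in $G\setminus G'$; a first-moment computation shows that many of these reserved edges lie in augmenting positions relative to $M'$, so with positive probability $M'$ can be extended sequentially along the colours of $U$ into a full rainbow matching.

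The main obstacle is the probabilistic calibration of the sampling, which must ensure simultaneously that $G'$ still satisfies the weak asymptotic's hypothesis, that every missed colour has reserved edges in compatible augmenting positions, and that the $|U|=O(\sqrt n)$ sequential augmentations do not deplete the slack of any single colour. The arithmetic works because $|U|\cdot |U|=O(n)\ll n^{3/4}\cdot \sqrt n$, matching the paper's heuristic ``weak error $n/f(n)$ implies strong error $3n/\sqrt{f(n)}$'' with $f(n)=\sqrt n$; a concentration argument for independent clique deletions is the main technical step needed to turn this heuristic into a proof.
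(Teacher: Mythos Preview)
Your weak asymptotic sketch is roughly correct and matches the paper's Proposition~\ref{claimgrinblat}; the paper's counting is more explicit (each missing colour yields a matching of size $\geq n+2k$ from $V_0$ to $V(M)$, and then one double-counts edges of $M$ hit twice), but your outline is in the right spirit.

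The sampling step, however, has a genuine gap. You sample by deleting each monochromatic \emph{clique} independently, so that $G'$ and the ``reserved'' edges in $G\setminus G'$ partition the edge set but \emph{not} the vertex set. The rainbow matching $M'$ you find in $G'$ covers close to $2n$ vertices of the common vertex set, and the reserved cliques of a missed colour $c$ can all meet $V(M')$: for instance, every $c$-clique could have exactly one vertex outside $V(M')$, which is consistent with the vertex-count $3n+Cn^{3/4}$ versus $|V(M')|\leq 2n$. In that case no reserved $c$-edge is disjoint from $M'$, and your greedy completion has nothing to work with. The dependence between $M'$ and the sampling makes a first-moment rescue hopeless: the obstruction is deterministic, not probabilistic.

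The paper's trick is to sample \emph{vertices}: put each vertex in $S$ independently with probability $p=2n^{-1/4}$. Then $G-S$ and $G[S]$ are vertex-disjoint by construction. Concentration (Lemma~\ref{chernoff}) gives that every colour still spans $\geq 3n$ vertices in $G-S$ and has $\geq 5\sqrt n$ edges inside $G[S]$. Apply the weak asymptotic in $G-S$ to get $M$ missing $\leq\sqrt n$ colours; since each missing colour has max degree $2$ and $>4\sqrt n$ edges in $G[S]$, a greedy extension inside $G[S]$ finishes. The vertex partition is what makes the completion automatic; your clique partition does not provide it.

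(A minor point: your preprocessing ``split a $K_r$ into $\lfloor r/3\rfloor$ triangles and at most one edge'' fails already at $r=4$. The correct reduction, used in the paper, is $\lfloor r/2\rfloor$ edges when $r$ is even, and one triangle plus $\lfloor r/2\rfloor-1$ edges when $r$ is odd.)
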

\noindent Clemens, Ehrenm\"{u}ller and Pokrovskiy~\cite{clemens2017sets} further asked the question of what occurs to $v(n)$ when we restrict our $(n,v)$-multigraph to be a simple graph. Recently, Munh\'a Correia and Yepremyan~\cite{correia2020full} determined this asymptotically, showing that every $(n,2n+o(n))$-multigraph which is simple (and even with multiplicity at most $\sqrt{n}/ \log^2 n$) contains a rainbow matching using all the colours. On the other hand, one can construct a $(n,2n)$-multigraph which is simple and does not contain such a rainbow matching by considering the Cayley table of $\mathbb{Z}_n$ for even $n$, which shows that the error term $o(n)$ is indeed needed. This leads to the following very natural question: given the maximum edge multiplicity of a $(n,v)$-multigraph, what is the minimal $v$ ensuring that it contains a rainbow matching of size $n$? Our next result essentially answers this for multiplicities $\eps n$ for some small $\eps>0$.
\begin{thm}\label{simplegrinblatthm}
Every $\left(n,2n+2m+O \left(n/(\log n)^{1/4}\right)\right)$-multigraph with edge multiplicities at most $m$ contains a rainbow matching using all the colours.
\end{thm}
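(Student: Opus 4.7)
The plan is to follow the two-step framework used throughout the paper: first prove a weak asymptotic (rainbow matching covering all but $o(n)$ colours), then upgrade it to the strong statement via the sampling trick of \Cref{grinblatthm}. Since the sampling machinery is already in place, the new ingredient is a weak asymptotic with the right dependence on the edge multiplicity $m$.

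Concretely, I would aim to prove the following weak asymptotic: every $(n,v)$-multigraph with edge multiplicity at most $m$ and $v \geq 2n + 2m + \Omega(n/\sqrt{\log n})$ contains a rainbow matching of size $n - O(n/\sqrt{\log n})$. After the standard reduction to the case where each monochromatic component is a $K_2$ or a $K_3$, I would model the problem as finding a near-perfect matching in an auxiliary hypergraph whose hyperedges are the (colour, edge) pairs coming from the coloured $K_2$'s and $K_3$'s. In this hypergraph the codegree of any pair of $G$-vertices is at most $m$ (directly by the multiplicity bound), while codegrees involving a ``colour vertex'' are bounded by a constant (at most $2$, since a vertex lies in a unique component of any given colour). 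A Pippenger--Spencer / R\"odl-nibble type argument then yields a near-perfect rainbow matching, with the $+2m$ slack in the vertex count absorbing precisely the codegree contribution coming from the multiplicity.

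Once the weak asymptotic is available, the sampling trick from \Cref{grinblatthm} applies essentially verbatim. Randomly split the colour set into a reservoir $R$ of size $k \approx n/(\log n)^{1/4}$ and a main part $S$; apply the weak asymptotic to $G$ restricted to the colours in $S$ to obtain a rainbow matching covering all but $O(|S|/\sqrt{\log |S|})$ colours of $S$; then use the still-untouched vertices made available by removing $R$ to greedily augment over the $O(n/(\log n)^{1/4})$ remaining colours (those in $R$ together with those missed in $S$). The numerics are exactly as described in the introduction: a weak error of $n/\sqrt{\log n}$ (i.e.\ $f(n) = \sqrt{\log n}$) converts into a strong error of $3n/\sqrt{f(n)} = O(n/(\log n)^{1/4})$, matching the bound claimed in the theorem.

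The principal obstacle is proving the weak asymptotic with the \emph{correct} coefficient $2$ on $m$. A slacker bound of the form $Cm$ with $C>2$ would still give a valid theorem, but it would fail to recover the $m=1$ simple-graph result of Munh\'a Correia and Yepremyan~\cite{correia2020full} as a special case, and would be unsatisfying. This forces the nibble analysis to be tight in the codegree parameter, which is the most delicate technical point: one essentially has to adapt the semi-random argument of~\cite{correia2020full} to general multiplicity, tracking the contributions of $K_2$ and $K_3$ components separately so that no avoidable constant appears in front of $m$.
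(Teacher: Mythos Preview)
Your two-step framework (weak asymptotic, then sampling trick) matches the paper. One minor correction: the sampling trick in the paper samples \emph{vertices}, not colours; your colour-splitting variant does not obviously supply the vertex reservoir needed for the final greedy completion, so you should revert to vertex sampling there.

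The real gap is in your proposed weak asymptotic. The R\"odl-nibble / Pippenger--Spencer route does not go through as sketched, for two reasons. First, the auxiliary $3$-uniform hypergraph on $V(G)\cup C$ is far from regular: after the $K_2/K_3$ reduction a colour consisting entirely of triangles has roughly twice as many hyperedges as one consisting of $K_2$'s, and vertices of $G$ have no lower bound on their degree whatsoever. Second, and decisively, the nibble deficit scales like $N\cdot(\text{codegree}/\text{degree})^{\alpha}$; when $m=\Theta(n)$ this ratio is a constant and the nibble gives nothing. There is no mechanism by which the precise additive $+2m$ emerges from a semi-random analysis --- the result of~\cite{correia2020full} you hope to generalise only treats $m\le\sqrt{n}/\log^2 n$ precisely because the nibble needs $m=o(n)$.

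The paper's weak asymptotic (Theorem~\ref{weaksimplegrinblatthm}) is obtained by an entirely different, purely combinatorial route, and this is where almost all the work lies. The coefficient $2$ on $m$ comes from a switching argument (Lemma~\ref{matching}): any $(n,2n+2m,m)$-multigraph has a (not necessarily rainbow) matching of size $n$; this is bootstrapped (Corollary~\ref{Mprimelemma}) to many edge-disjoint large matchings in the relevant colours. The heart of the proof is then a new ``block'' machinery --- structured augmenting configurations that are iteratively enlarged through roughly $\sqrt{\log n}$ rounds of Lemma~\ref{stepprocedure} --- until the total block size exceeds $|V(G)|$, contradicting maximality of the rainbow matching. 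None of this is probabilistic; the sampling trick enters only at the very end, applied to vertices exactly as in Section~\ref{grinblatsec}.
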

\noindent Note first that this greatly generalises the result in \cite{correia2020full}, showing that whenever multiplicity is $o(n)$, already the same bound $v = 2n+o(n)$ as for the Aharoni-Berger problem is enough to guarantee the desired rainbow matching. Moreover, in Section \ref{multgrinb}, we will construct $\left(n,2n+2\eps n+ O(\eps^2 n)\right)$-multigraphs with edge multiplicity at most $\eps n$ and no rainbow matching of size $n$. This shows that the dependence on $m$ in the above theorem is asymptotically tight for $m=\eps n$ with small $\eps>0$.

\subsection*{Alspach's conjecture}
Recall that a $2$-factor is a spanning subgraph of a graph in which every vertex has degree 2. Much like the Ryser-Brualdi-Stein conjecture is about finding rainbow matchings in $1$-factorizations, one can look for them in $2$-factorizations too. In 1988, Alspach made the following conjecture. 
\begin{conj} [Alspach, \cite{alspach1988problem}]\label{alspach}
Let $G$ be a $2d$-regular graph, edge-coloured so that each colour is a 2-factor. Then, there exists a rainbow matching using every colour.
\end{conj}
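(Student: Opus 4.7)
The plan is to adapt the paper's weak-to-strong sampling framework to Alspach's setting: under the Alspach hypothesis (possibly with a mild amount of extra slack, e.g.\ a vertex count $n \geq 2d + d^{3/4}$), every $2d$-regular graph whose colour classes are 2-factors contains a rainbow matching using \emph{every} colour. The two ingredients will be (i) a direct weak asymptotic producing a rainbow matching of size $d-o(d)$, and (ii) the sampling trick from Theorem~\ref{bipartiteABthm}, which promotes (i) to a full rainbow matching hitting all $d$ colours.

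For the weak asymptotic, I would take a maximum rainbow matching $M$ of size $k$ and argue that $k \geq d - O(\sqrt{d})$. For any colour $c$ not used by $M$, the 2-factor $F_c$ has $n$ edges, and each vertex of $V(M)$ lies in exactly two edges of $F_c$, so at most $4k$ edges of $F_c$ meet $V(M)$. When $k$ is not too close to $d$ this immediately yields an edge disjoint from $M$ and hence a direct extension. For $k$ close to $d$, iterate a Woolbright-style switching argument: if every edge of $F_c$ meets $V(M)$, then for each $uv \in F_c$ with $u \in V(M)$ the switch $M \mapsto M - uw + uv$ (where $uw \in M$ has some colour $c'$) frees up $c'$ and introduces $c$. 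Tracking an appropriate potential --- e.g.\ the number of colours reachable after a bounded number of such switches --- as in the Barat--Gy\'arf\'as--S\'ark\"ozy proof for Aharoni--Berger, adapted to the non-bipartite 2-factor setting, pushes the matching up to size $d - O(\sqrt{d})$.

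For the sampling step, follow the strategy behind Theorem~\ref{bipartiteABthm}: choose a uniformly random subset $S$ of about $\sqrt{d}$ colours and remove them. Apply the weak asymptotic to the remaining $d - |S|$ colours to obtain a rainbow matching $M'$ of size at least $(d - |S|) - o(d)$ that avoids every colour in $S$. The slack built into the hypothesis ensures that for each $c \in S$ the 2-factor $F_c$ still contains many edges disjoint from $V(M')$, and a first-moment computation over the random choice of $S$ shows that with positive probability one can greedily extend $M'$ by one edge of each colour in $S$, producing a rainbow matching using all $d$ colours and completing the proof.

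The main obstacle will be step (i): in the bipartite Aharoni--Berger situation, switching along alternating paths cannot revisit vertices, but here the switches happen inside the cycles of the 2-factors $F_c$, so a single switch can bring back a vertex and collapse the potential argument. Overcoming this will most likely require a stability lemma showing that, for almost every unused colour $c$, a large majority of the edges of $F_c$ behave like an independent matching with respect to $V(M)$, together with careful bookkeeping that prevents the switching chains from short-circuiting through short cycles of the 2-factors.
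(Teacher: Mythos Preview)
First, a framing remark: Conjecture~\ref{alspach} is still open, and the paper only establishes the asymptotic version, Theorem~\ref{alspachthm}, which assumes the extra slack $|G|\ge 2d+d^{3/4+o(1)}$. Your proposal is (rightly) aimed at that asymptotic statement, so I will compare it with the paper's proof of Theorem~\ref{alspachthm}.

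Your sampling step is not the paper's sampling trick, and as stated it does not work. In Theorem~\ref{bipartiteABthm} and in the proof of Theorem~\ref{alspachthm}, one samples a random set $S$ of \emph{vertices}, not colours: each vertex is placed in $S$ independently with a carefully chosen probability $p$, and concentration (Lemma~\ref{chernoff}) guarantees that every colour simultaneously has many edges inside $G[S]$ and many edges inside $G\setminus S$. The large rainbow matching $M$ is then built entirely in $G\setminus S$, so the reserved set $S$ is automatically disjoint from $V(M)$ and provides, for each of the $o(d)$ missing colours, more than $4\cdot o(d)$ edges entirely inside $S$; a greedy extension finishes. If instead you sample a set $S$ of colours and build $M'$ using the remaining $d-|S|$ colours, you have no control over which vertices $M'$ occupies. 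The $n-2|M'|\approx d^{3/4}$ vertices left uncovered may well have all of their (two) $F_c$-neighbours inside $V(M')$ for a given $c\in S$, so there need not be \emph{any} $c$-edge disjoint from $V(M')$. Since $M'$ is produced only after $S$ is chosen, there is no first-moment computation over $S$ that rescues this: the distribution of the uncovered vertex set is not under your control and is certainly not independent of $S$.

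For the weak asymptotic, the paper does not use Woolbright-type switching at all (and your own final paragraph correctly identifies why such an argument is delicate in the $2$-factor setting). Instead, after passing to $G\setminus S$, the paper observes that the auxiliary $3$-uniform hypergraph $\mathcal H$ with hyperedges $\{x,y,c\}$ for each colour-$c$ edge $xy$ in $G\setminus S$ is almost regular of degree $\approx 4d^2/n$ and has codegrees at most $2$. The Kostochka--R\"odl theorem on near-perfect matchings in such hypergraphs then gives a matching in $\mathcal H$ covering all but $O(d^{(1+\alpha)/2})$ vertices, which translates directly into a rainbow matching in $G\setminus S$ missing only $O(d^{(1+\alpha)/2})$ colours. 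This sidesteps entirely the short-cycle and revisiting-vertex obstacles you flag, at the cost of invoking a nibble-type black box rather than an elementary switching argument.
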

There are several motivations for this conjecture. Firstly, it implies the Ryser-Brualdi-Stein conjecture for \emph{symmetric} Latin squares which have the same symbol on the diagonal. To see this, consider a symmetric Latin square whose rows/columns are indexed by $1, \dots, n$. Suppose that the symbols are $1, \dots, n$ with the main diagonal consisting of only $1$'s. Consider a graph with vertices $x_1^-, x_1^+, \dots, x_n^-, x_n^+$ where for all $i\neq j$ and $s,t\in\{-,+\}$ we have the edge $x_i^sx_j^t$ and colour it by the $(i,j)$th entry of the Latin square. It is fairly easy to see that this is a $(n-1)$-edge-coloured, $2$-factorized graph in which a full rainbow matching gives a partial transversal of size $n-1$ in the Latin square. The above conjecture also strengthens problems of  Cacetta-Mardiyono~\cite{caccetta1992premature} and  Chung (see~\cite{kouider1989existence})  who asked whether Conjecture~\ref{alspach} is true when each colour class is a Hamilton cycle, rather than a general $2$-factor.

The state of previous research on Alspach's conjecture closely mirrors that of the problems previously discussed.
The weak asymptotic for Alspach's conjecture was proved by Anstee and Cacetta~\cite{anstee1998orthogonal} who showed that there is always a rainbow matching of size $d-d^{2/3}$. 
In contrast to this, it was not known  that a rainbow matching using every colour exists when we additionally assume that $|G|\geq (1+o(1))2d$. However, there has been a sequence of results finding full rainbow matchings when $|G|$ is significantly larger than $2d$.
A greedy argument proves the conjecture when we assume $|G|\geq 4d-3$. This was improved to $|G|\geq 4d-5$ by Alspach, Heinrich and Li~\cite{alspach1992orthogonal}, to $|G|\geq 3.32 d$ by Kouider and Sotteau~\cite{kouider1989existence}, to   $|G| \geq 3d-2$ by Stong~\cite{stong2002orthogonal}, and finally to $|G|\geq 2\sqrt 2d+4.5$ by Qu, Wang, and Yan~\cite{qu2015orthogonal}. Our sampling trick improves on all these results and establishes the strong asymptotic version of Alspach's conjecture.
\begin{thm}\label{alspachthm}
Let $G$ be a $2d$-regular graph which is edge-coloured so that every colour class is a 2-factor. If $G$ has at least $2d+d^{3/4+o(1)}$ vertices and $d$ is sufficiently large, then
it has a rainbow matching using every colour.
\end{thm}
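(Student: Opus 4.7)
The plan is to deploy the sampling trick advertised in the introduction using as input the weak asymptotic of Anstee and Caccetta~\cite{anstee1998orthogonal}, which gives a rainbow matching of size $d - O(d^{2/3})$ in any $2$-factorized $2d$-regular graph. Set $s := d^{3/4 + o(1)}$, so $|V(G)| \ge 2d + s$. The three high-level steps are to (i) randomly reserve a small fraction of the colours, (ii) apply the weak asymptotic to the remaining colours, and (iii) use the $s$ extra vertices together with the reservoir to cover the colours left out by step (ii).

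In more detail, I would first pick a tuning parameter $\ell$ of order $s$ and sample a uniformly random subset $R \subseteq [d]$ of $\ell$ colours, setting $C := [d] \setminus R$. The spanning subgraph $G_C$ formed by the edges of colours in $C$ is $2|C|$-regular and $2$-factorized, so by Anstee--Caccetta it contains a rainbow matching $M$ using all but at most $O(d^{2/3})$ colours of $C$; call the leftover set $C'$. The free vertex set $U := V(G) \setminus V(M)$ then satisfies $|U| \ge s + 2\ell + O(d^{2/3})$, and the remaining task is to build a rainbow matching on the colours $R \cup C'$ whose edges are all contained in $U$. Here the crucial use of randomness enters: since each colour class is a spanning $2$-factor of $G$, a Chernoff-type argument over the random choice of $R$ shows that every colour $c \in R \cup C'$ has $\Omega(|U|^2/n)$ edges inside $U$ simultaneously with high probability, after which a straightforward greedy (or defect-Hall) argument produces the desired absorbing rainbow matching.

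The main obstacle is step (iii), namely certifying enough colour-$c$ edges inside $U$. The only deterministic bound available for a single $2$-factor restricted to a subset $U$ is $2|U| - n$, which is useless when $|U|$ is close to $n/2$, so randomness is essential. One therefore has to argue that the $2$-factor of each outstanding colour in $R \cup C'$ is approximately independent of $V(M)$: for colours in $R$ this is immediate from the random choice of the reservoir, while for colours in $C'$ one needs an additional symmetrisation (e.g.\ exposing $M$ in two stages, or re-randomising the labelling of colours inside $C$). Balancing the reservoir size $\ell$ against the weak-asymptotic error $d^{2/3}$ and the concentration deviations from the Chernoff step then fixes $\ell = d^{3/4 + o(1)}$, which is precisely the exponent appearing in the theorem.
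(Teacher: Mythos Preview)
Your proposal has two independent gaps, and the approach differs from the paper's in a way that matters.

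First, the sampling trick here samples \emph{vertices}, not colours. By putting each vertex into a reservoir $S$ independently with probability $p=1-2d/n$, one obtains direct Chernoff control on the number of colour-$c$ edges inside $G[S]$ for every fixed colour $c$, \emph{before} any matching is constructed. Your scheme instead reserves a random set $R$ of colours and declares the reservoir to be $U:=V(G)\setminus V(M)$, where $M$ comes from Anstee--Caccetta applied to $G_C$. But $U$ is not a random subset of $V(G)$ in any useful sense: it is the deterministic output of an algorithm run on a fixed graph, and nothing prevents it from being badly placed with respect to the $2$-factor of a given colour $c\in R$. The assertion that ``for colours in $R$ this is immediate from the random choice of the reservoir'' is incorrect: conditioning on $c\in R$ only removes colour $c$ from the input of the procedure; it does not make the \emph{vertex set} $U$ random relative to the fixed $2$-factor of colour $c$, so there is no Chernoff inequality to invoke. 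The suggested symmetrisation for $C'$ suffers from the same defect.

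Second, even granting the idealised random-subset estimate of $\Omega(|U|^{2}/n)$ colour-$c$ edges inside $U$, the arithmetic does not close. With $n\le 4d$ and $|U|=\Theta(s+\ell)$ one gets $|U|^{2}/n=O((s+\ell)^{2}/d)$, while the greedy step needs this to exceed $4|R\cup C'|\ge 4(\ell+\Omega(d^{2/3}))$. Optimising over $\ell$ still forces $s\gtrsim d^{5/6}$, not $d^{3/4}$; the Anstee--Caccetta error $d^{2/3}$ is simply too large. The paper avoids this by not using Anstee--Caccetta for the weak asymptotic: after vertex-sampling, $G\setminus S$ is nearly $(4d^{2}/n)$-regular with all codegrees at most~$2$, so the associated $3$-uniform hypergraph $\{(x,y,c):xy\text{ has colour }c\text{ in }G\setminus S\}$ satisfies the hypotheses of the Kostochka--R\"odl near-perfect matching theorem, yielding a rainbow matching missing only $O(d^{1/2+o(1)})$ colours. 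It is this improved weak error, combined with vertex sampling, that makes the exponent $3/4$ attainable.
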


\section{The sampling trick and first applications}
In this section, we will introduce the sampling trick and give three short applications. Our approach will allow us to find rainbow matchings using all the colours available when we know the existence of one which uses almost all the colours. Informally, the idea behind the trick is the following. Given an edge-coloured multigraph $G$, such that each colour class has large size and some specific structure, our goal is to find a full rainbow matching. We will then randomly choose a set $S \subseteq V(G)$ of vertices, by putting each vertex in $S$ independently with some appropriately chosen small probability $p$. This will imply that each colour class has most of its edges in $G-S$, but still relatively many edges in $G[S]$. In order to construct a full rainbow matching, we then find a rainbow matching inside $G-S$ which uses almost all the colours and then complete it, by greedily finding a rainbow matching inside $G[S]$ which uses the rest of the colours.

In order to use the sampling trick in each application, we will need a standard probabilistic concentration bound, which will always be the following (see, e.g., \cite{hoeffding}).
\begin{lem}\label{chernoff}
Let $X$ be the sum of independent random variables $X_1, \ldots, X_n$ such that each $0 \leq X_i \leq k $. Then, for all $0 < \eps < 1$,
$$\mathbb{P} \left( |X - \mathbb{E}[X]| > \eps \mathbb{E}[X] \right) \leq 2 e^{-\eps^2 \mathbb{E}[X]/3k^2}$$
\end{lem}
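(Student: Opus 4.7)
The plan is to use a standard Chernoff-type moment generating function argument, after first reducing to the $[0,1]$-valued case by normalizing. I would set $Y_i := X_i/k$, so that each $Y_i \in [0,1]$, and let $Y := X/k$, which satisfies $\mathbb{E}[Y] = \mathbb{E}[X]/k$. Since in all applications we have $k \geq 1$, it suffices to prove the classical multiplicative Chernoff inequality
$$\mathbb{P}(|Y - \mathbb{E}[Y]| > \eps \mathbb{E}[Y]) \leq 2 e^{-\eps^2 \mathbb{E}[Y]/3},$$
as substituting $\mathbb{E}[Y] = \mathbb{E}[X]/k$ then yields a bound with $3k$ in place of $3k^2$ in the exponent, which is at least as strong as the stated inequality.

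For the upper tail, I would apply Markov's inequality to $e^{tY}$ for $t > 0$, giving
$$\mathbb{P}(Y > (1+\eps)\mathbb{E}[Y]) \leq e^{-t(1+\eps)\mathbb{E}[Y]} \prod_{i=1}^n \mathbb{E}[e^{tY_i}],$$
where independence has been used to factor the expectation. The elementary convexity bound $e^{tx} \leq 1 + (e^t - 1)x$ on $[0,1]$ yields $\mathbb{E}[e^{tY_i}] \leq 1 + (e^t-1)\mathbb{E}[Y_i] \leq \exp((e^t-1)\mathbb{E}[Y_i])$, and multiplying gives $\prod_i \mathbb{E}[e^{tY_i}] \leq \exp((e^t-1)\mathbb{E}[Y])$. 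Choosing $t = \ln(1+\eps)$ and applying the elementary calculus inequality $(1+\eps)\ln(1+\eps) - \eps \geq \eps^2/3$ for $\eps \in (0,1)$ then gives the desired upper tail bound. The lower tail is entirely symmetric, using $e^{-tY}$ in place of $e^{tY}$ and the analogous inequality $(1-\eps)\ln(1-\eps) + \eps \geq \eps^2/2$.

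Combining the two tails and translating back through $\mathbb{E}[Y] = \mathbb{E}[X]/k$ yields the lemma. There is essentially no obstacle: every step is textbook Chernoff-Hoeffding material, and the only quasi-nontrivial ingredient is the elementary calculus inequality above, whose verification is a one-line derivative computation. An alternative route, matching the reference already present in the statement, is simply to invoke Hoeffding's original inequality from \cite{hoeffding} directly and rearrange parameters, which is presumably what the authors have in mind since they state the bound without proof.
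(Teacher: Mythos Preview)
Your proposal is correct, and you have in fact anticipated exactly what the paper does: the authors state the lemma as a standard concentration inequality with a reference to \cite{hoeffding} and give no proof whatsoever. Your sketch via the moment generating function and the rescaling $Y_i = X_i/k$ is the textbook derivation and is strictly more than the paper provides; your closing remark that the authors likely intend a direct citation is precisely right.
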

\subsection{The Grinblat problem}\label{grinblatsec}
We will first give a short proof of Theorem \ref{grinblatthm}. As indicated above, in order to apply our sampling trick, we first need a result which provides us with a rainbow matching that uses almost all the colours.
\begin{prop}\label{claimgrinblat}
Let $G$ be a $(n,3n)$-multigraph. Then, it contains a rainbow matching of size at least $n - \sqrt{n}$.
\end{prop}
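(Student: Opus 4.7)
The plan is to suppose, for contradiction, that a maximum rainbow matching $M$ of $G$ has size $|M| = n-k$ with $k > \sqrt n$, and to derive an augmenting structure. Let $U$ denote the set of $k$ unused colours and set $W := V(G) \setminus V(M)$. As a preliminary reduction, every monochromatic clique may be assumed to be a $K_2$ or a $K_3$, since passing to a sub-clique of size $2$ or $3$ preserves the $(n,3n)$-property.

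The first key input comes from maximality of $M$: for every $c \in U$, no clique of colour $c$ can contain two vertices of $W$, since the edge between them would extend $M$ to a larger rainbow matching. Hence each clique of $c$ has at most one vertex in $W$, and so
$$
|V_c \cap V(M)| \;\ge\; v_c - \#(\text{cliques of } c) \;\ge\; \tfrac{v_c}{2} \;\ge\; \tfrac{3n}{2}.
$$
Summing over $c \in U$ gives $\sum_{c\in U}|V_c \cap V(M)| \ge 3nk/2$, and since $|V(M)| = 2(n-k)$, a pigeonhole argument produces a vertex $v^* \in V(M)$ that lies in $V_c$ for at least $3k/4$ distinct unused colours $c$. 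Let $f^* = u^*v^* \in M$ be the $M$-edge incident to $v^*$.

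The second step is to convert these many unused-colour touches at $v^*$ (and an analogous count at $u^*$) into an actual augmentation of $f^*$. For each $c \in U$ with $v^* \in V_c$, the clique of $c$ containing $v^*$ provides an edge $e_c$ of colour $c$ incident to $v^*$, whose other endpoint lies in $W$, equals $u^*$, or lies in $V(M) \setminus \{u^*,v^*\}$. Maximality forbids several natural configurations: most importantly, two distinct $c, c' \in U$ cannot simultaneously supply disjoint $W$-crossing edges $(u^*, x)$ and $(v^*, y)$, since then $(M \setminus \{f^*\})\cup\{(u^*,x),(v^*,y)\}$ would be a larger rainbow matching; and an unused-colour internal edge from $\{u^*,v^*\}$ into a different $M$-edge $f'$ cannot be combined with a free edge of the freed colour $c_{f'}$ disjoint from the resulting matching. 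A careful case analysis of the crossing/parallel/internal edges at both $u^*$ and $v^*$ then produces a quadratic constraint of the shape $k^2 \lesssim n$, giving $k \le \sqrt n$.

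The main obstacle lies in the case analysis of the augmentation step: the three possible locations for the far endpoint of $e_c$ (in $W$, at $u^*$, or in the rest of $V(M)$) give rise to slightly different forbidden configurations, and one must verify that maximality of $M$ simultaneously rules out every way in which two unused colours could cooperate to enlarge $M$. The $\sqrt n$ exponent should emerge naturally from the tension between the linear-in-$k$ lower bound coming from the vertex-incidence count and the quadratic obstruction imposed by the forbidden two-edge augmentations.
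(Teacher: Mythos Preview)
Your first step is fine, but the second step has a genuine gap and, as outlined, cannot be completed.

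The pigeonhole argument gives you a single vertex $v^{*}\in V(M)$ lying in $V_{c}$ for at least $3k/4$ unused colours $c$. From this you want to augment at the edge $f^{*}=u^{*}v^{*}$. But the augmentation you describe---replacing $f^{*}$ by two disjoint unused-colour edges $(u^{*},x)$ and $(v^{*},y)$ with $x,y\in W$---requires an unused-colour edge at $u^{*}$, and your argument supplies no information about $u^{*}$ whatsoever: the phrase ``an analogous count at $u^{*}$'' is unfounded, since pigeonhole only promises one heavy vertex. It is entirely consistent with your counting that $u^{*}$ lies in no $V_{c}$ for any $c\in U$, in which case no such two-edge augmentation exists. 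Moreover, the $\geq 3k/4$ edges you do have at $v^{*}$ need not go to $W$ at all; they could all land in $V(M)\setminus\{u^{*},v^{*}\}$, forcing you into alternating-path arguments you do not supply. There is no route from ``one vertex is heavy'' to a constraint of the shape $k^{2}\lesssim n$; the quadratic has to come from somewhere, and a single heavy vertex is a linear object.

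The paper's proof avoids this by working globally rather than at one vertex. For each unused colour $c$, maximality forces every $c$-clique to meet $V_{0}$ in at most one vertex; since $|V(M)|=2(n-k)$, at least $3n-2(n-k)=n+2k$ vertices of colour $c$ lie in $V_{0}$, each sending a $c$-edge into $V(M)$. These edges are pairwise disjoint (else a $c$-edge would lie in $V_{0}$), so they form a matching $M_{c}$ of size at least $n+2k$ between $V_{0}$ and $V(M)$. As $|M|=n-k$, at least $3k$ edges of $M$ have both endpoints hit by $M_{c}$. The key observation is then that maximality forbids any $e\in M$ from supporting a two-edge $C_{0}$-rainbow matching into $V_{0}$, and a short argument shows this forces each $e\in M$ to be doubly hit by at most two colours of $C_{0}$. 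Double-counting gives $3k\cdot|C_{0}|=3k^{2}\leq 2|M|<2n$, the required contradiction. The quadratic $k^{2}$ arises from summing a linear-in-$k$ count over all $k$ unused colours, not from focusing on one vertex.
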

\begin{proof}
Let $M$ be a maximal rainbow matching in $G$ and suppose for contradiction sake that $|M|=n-k$ with $k > \sqrt{n}$. Let $C_0, |C_0|=k$ denote the set of colours not used in $M$ and $V_0 := V \setminus V(M)$. Then $V_0$ contains no $C_0$-coloured edge, as this would contradict the maximality of $M$. Moreover, since $M$ covers $2n-2k$ vertices, each colour $c \in C_0$ has at least $n+2k$ vertices in $V_0$ belonging to its colour class and so, there is a $c$-edge from each such vertex to $V(M)$. Finally, since the colour class of $c$ is a disjoint union of non-trivial cliques, these edges must be pairwise disjoint, as otherwise, there would be a $c$-edge connecting their endpoints in $V_0$. Therefore, for each colour $c \in C_0$, there exists a $c$-coloured matching $M_c \subseteq E[V_0,V(M)]$ of size at least $n+2k$.

Note also that by the maximality of $M$, there is no edge $e \in M$ for which there is a $C_0$-coloured rainbow matching consisting of two edges from $e$ to $V_0$. Given this, it is easy to check that for each $e \in M$, there are at most two colours $c_1,c_2 \in C_0$ such that two edges of $M_{c_1}$ and two edges of $M_{c_2}$ intersect $e$. On the other hand, as every 
$M_c, c \in C_0$ has size at least $n+2k$ and $M$ has $n-k$ edges, there are at least $3k$ edges in $M$ which intersect two edges in $M_c$. This implies that
$2n>2|M| \geq 3k|C_0|=3k^2>3n$, a contradiction.
\end{proof}
\begin{proof} [ of Theorem \ref{grinblatthm}]
Let now $G$ be a $(n,3n+40n^{3/4})$-multigraph. We can assume that each of the monochromatic cliques in the graph are either a $K_3$ or a $K_2$, since every clique can be partitioned into disjoint edges and at most one triangle which cover the same set of vertices. 
For each colour $c$, let then $t_c$ denote the number of triangles in its colour class and $l_c$ the number of edges, so that $3t_c + 2l_c \geq 3n + 40n^{3/4}$. 

Let $S \subseteq V(G)$ be a random set obtained by choosing each vertex independently with probability $p = 2n^{-1/4}$.
For each colour $c$, let $c[S]$, $c[G\setminus S]$ denote the sets of colour $c$ edges contained in $S$ and $G\setminus S$ respectively. Let $|c[S]|$, $|c[G\setminus S]|$ be the number of non-isolated vertices in each graph. Let us calculate $\mathbb{E}[|c[S]|]$. Each $K_3$ contributes $3p^3+6p^2(1-p)$ to this expectation, whereas each $K_2$ contributes $2p^2$ to it. Thus $\mathbb{E}[|c[S]|]=t_c(3p^3+6p^2(1-p))+2l_cp^2 \geq p^2(3t_c+2l_c) \geq 3p^2 n = 12\sqrt{n}.$ Similarly, 
\begin{eqnarray*}
\mathbb{E}[|c[G\setminus S]|]&=&t_c(3(1-p)^3+6(1-p)^2p)+2l_c(1-p)^2\geq (3t_c+2l_c)(1-p)^3\geq (3n + 40n^{3/4})(1-p)^3\\
&\geq& (3n + 40n^{3/4})(1-3p)=(3n + 40n^{3/4})(1-6n^{-1/4}) \geq 3n +20n^{3/4}.
\end{eqnarray*}
Notice that these random variables are sums of independent $[0,3]$-valued random variables. Therefore, by Lemma~\ref{chernoff}, we have $\P(|c[S]|<10\sqrt n)\leq o(n^{-1})$ and $\P(|c[G\setminus S]|< 3n)\leq o(n^{-1})$. By the union bound, with positive probability none of these events happen for any of the colours. 

Thus there exists a set $S$ with $|c[S]|\geq 10\sqrt n$ and $|c[G\setminus S]|\geq 3n$ for all colours $c$. Then, each $c[S]$ has at least $5\sqrt n$ edges and, by Proposition \ref{claimgrinblat}, there is a rainbow matching $M$ in $G-S$ of size at least $n-\sqrt{n}$. Let $C_0$ denote the set of colours not used in $M$. Since now each colour class in $C_0$ has maximum degree two and more than $2\cdot 2 \cdot |C_0|=4|C_0|=4\sqrt{n}$ edges in $G[S]$, we can greedily find a rainbow matching $N \subseteq G[S]$ which uses all colours in $C_0$. As a result, $M \cup N$ is a full rainbow matching in $G$.
\end{proof}

\subsection{The Bipartite Aharoni-Berger problem}
Next we give a short proof of Theorem \ref{bipartiteABthm}. As the reader might already anticipate, we first need a result which gives us, in this setting, a rainbow matching using almost all the colours. 
This was obtained by Bar\'{a}t, Gy\'{a}rf\'{a}s and S\'{a}rk\"{o}zy \cite{barat2017rainbow}, using a short alternating paths argument which dates back to the result of Woolbright \cite{woolbright1978n} that every Latin square of order $n$ contains a transversal of size $n - \sqrt{n}$.
\begin{prop}\label{baratbipAB}
For all sufficiently large $n$, any $n$-edge-coloured bipartite multigraph in which each colour class is a matching of size at least $n$ contains a rainbow matching of size at least $n - \sqrt{n}$.
\end{prop}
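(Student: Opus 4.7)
The plan is a standard maximum-rainbow-matching / alternating-path argument in the Woolbright style. Suppose for contradiction that a maximum rainbow matching $M$ has size $n-k$ with $k>\sqrt n$. Let $V_0=V(G)\setminus V(M)$ and let $C_0$ denote the $k$ colours missing from $M$; write the bipartition as $A\cup B$. Two easy observations set up the count. First, for each $c\in C_0$, the matching $M_c$ has at least $n$ edges; at most $n-k$ of them fit inside $V(M)$, and by maximality of $M$ none lies entirely in $V_0$, so at least $k$ edges of $M_c$ cross between $V(M)$ and $V_0$, yielding at least $k^2$ such crossing incidences. Second, for any $M$-edge $uv$ and any distinct $c,c'\in C_0$, we cannot simultaneously have a $c$-edge from $u$ to $V_0$ and a $c'$-edge from $v$ to $V_0$: the two endpoints in $V_0$ automatically lie on opposite sides of the bipartition, hence are distinct, and swapping $uv$ for these two edges would enlarge $M$. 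Thus at each $M$-edge the $C_0$-edges to $V_0$ are concentrated: all at one endpoint, or one at each endpoint of the same single colour.

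A direct count from these two observations only forces $k\leq n/2$. To reach $k\leq \sqrt n$, I would iterate the extension argument along alternating paths of length~$3$. If $w\in V_0$ has a $C_0$-edge $wy$ of colour $c$ and $y'$ is the $M$-partner of $y$, then swapping $yy'$ out of $M$ and adding $wy$ frees the vertex $y'$, so $y'$ can have no $C_0$-edge to $V_0$ of any colour other than $c$: any such edge would yield a second extension and enlarge $M$. Applying this to two different $w$'s sharing the same $y$ (which happens whenever $y$ has $C_0$-edges of at least two different colours into $V_0$) then forces $y'$ to have no $C_0$-edge to $V_0$ at all. Propagating these cascading constraints across $V(M)$ yields an upper bound of order $k\sqrt n$ on the total number of $C_0$-crossings, via a double-count trading ``high $C_0$-degree'' vertices on one side of $V(M)$ against their ``dead'' partners on the other. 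Combined with the lower bound $k^2$, this forces $k\leq\sqrt n$, contradicting $k>\sqrt n$.

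The main obstacle is exactly this third step: the length-$1$ extension obstruction is too weak on its own, and the delicate part is keeping track of the ``which side carries the $C_0$-edges'' dichotomy across all of $M$ simultaneously while propagating the length-$3$ alternating path constraints. This is the combinatorial core of the Woolbright-style argument which underlies the cited theorem of Bar\'at, Gy\'arf\'as and S\'ark\"ozy.
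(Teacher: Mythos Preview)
The paper does not actually prove this proposition; it is quoted from Bar\'at, Gy\'arf\'as and S\'ark\"ozy and only described as ``a short alternating paths argument which dates back to the result of Woolbright''. So there is no in-paper proof to compare against, and I evaluate your sketch on its own merits.

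Your setup and first two observations are correct, and you are right that length-$1$ augmentations alone only force $k\le n-k$. The gap is in your third step. Restricting to alternating paths of length~$3$ and then invoking an unspecified ``double-count trading high-$C_0$-degree vertices against their dead partners'' to get an $O(k\sqrt n)$ upper bound on the crossings is not an argument; no such bound has been derived, and I do not see how the length-$3$ constraints alone can produce it. The actual Woolbright/Bar\'at--Gy\'arf\'as--S\'ark\"ozy proof is not a single double-count but an \emph{iterated expansion through all $k$ missing colours}. Order them as $c_1,\dots,c_k$, fix one side $X$ of the bipartition, set $A_0:=X\setminus V(M)$, and for each $i$ let $A_i:=A_{i-1}\cup m\bigl(N_{c_i}(A_{i-1})\cap V(M)\bigr)$. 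An augmenting-path argument of length up to $2i-1$ (not just~$3$) shows that no $c_i$-edge runs from $A_{i-1}$ to $Y\setminus V(M)$; since $c_i$ is a matching of size at least $n$ and at most $(n-k)-|A_{i-1}\cap V(M)|$ of its edges can have their $X$-endpoint outside $A_{i-1}$, one obtains $|A_i\cap V(M)|\ge |A_{i-1}\cap V(M)|+k$. After $k$ steps this yields $k^2\le |A_k\cap V(M)|\le n-k$, hence $k<\sqrt n$.

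Your ``swap $yy'$ out and free $y'$'' observation is precisely the case $i=1$ of this expansion; to reach the $\sqrt n$ bound you must continue through all $k$ colours, allowing augmenting paths of length up to $2k-1$, rather than stop at length~$3$ and hope a double-count closes the gap.
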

\begin{proof}[ of Theorem \ref{bipartiteABthm}]
Let $G$ be an $n$-edge-coloured bipartite multigraph such that each colour class is a matching of size $n + 7n^{3/4}$. 
Let $S \subseteq V(G)$ be a subset obtained by choosing each vertex independently with probability $p = 2n^{-1/4}$. 
For each colour $c$,  let $c[S]$, $c[G\setminus S]$ denote the sets of colour $c$ edges contained in $S$ and $G\setminus S$ respectively.
Letting $e(c[S])$, $e(c[G\setminus S])$ denote the number of these edges, we have $\mathbb E(e(c[S]))=p^2(n + 7n^{3/4})\geq 4\sqrt n$ and $\mathbb E(e(c[G\setminus S]))=(1-p)^2(n + 7n^{3/4})\geq (1-2p)(n + 7n^{3/4})\geq
n+2n^{3/4}$. Therefore,   by Lemma \ref{chernoff} and a union bound over all colours, we have that with positive  probability, colours have $e(c[S])\geq 3\sqrt n$ and $e(c[G\setminus S])\geq n$. Fix a set $S$ satisfying this. 

By Proposition \ref{baratbipAB}, there is a rainbow matching $M$ in $G-S$ of size at least $n-\sqrt{n}$. Let $C_0$ denote the set of colours not used in $M$. Since each colour in $C_0$ is a matching and has more than $2\cdot |C_0|=2\sqrt{n}$ edges in $G[S]$, we can greedily find a rainbow matching $N \subseteq G[S]$ which uses all colours in $C_0$. As a result, $M \cup N$ is a full rainbow matching in $G$.
\end{proof}
\subsection{The Alspach problem}
The last short application of the sampling trick will be the proof of Theorem \ref{alspachthm}. Here we will use
the well known results, proved by using the so called R\"odl-nibble type arguments, which state that nearly regular uniform hypergraphs with small codegrees have almost perfect matchings. This will allow us to find in this setting a rainbow matching using almost all the colours and then use the sampling trick to complete the proof.

\begin{proof}[ of Theorem \ref{alspachthm}]Let $\alpha<0.1$ be an arbitrarily small constant and let $G$ be a $2d$-regular graph on $n \geq 2d + d^{3/4+\alpha}$ vertices where $d$ is sufficiently large in terms of $\alpha$. Suppose further that $G$ is $d$-edge-coloured so that each colour forms a $2$-factor in $G$. Since every vertex has degree at most $2$ in any given colour, deleting the vertices belonging to some arbitrary edge in the graph can destroy at most $4$ edges of that colour. Since the number of edges of every colour is $n$, we can assume that $n \leq 4d$, since otherwise we can get a full rainbow matching greedily. Let $S \subseteq V(G)$ be a subset obtained by choosing each vertex independently  with probability $p=1-\frac{2d}n$.

For each colour, let $c[S]$, $c[G\setminus S]$ denote the colour $c$ edges contained in $S$ and $G\setminus S$, respectively. We have that $\mathbb E(e(c[S]))=p^2n=(n-2d)^2n^{-1}\geq d^{3/2+2\alpha}n^{-1}\geq d^{1/2+2\alpha}/4$ and $\mathbb E(e(c[G \setminus S]))=(1-p)^2n=4d^2/n$. For every vertex we have $|N(v)\setminus S|=(1-p)2d=4d^2/n$.
Next we prove concentration of all these random variables. For $|N(v)\setminus S|$ this is immediate from Lemma~\ref{chernoff} (since $|N(v)\setminus S|$ is a sum of independent $\{0,1\}$-valued random variables), so we have that each vertex $v$ has $|N(v)\setminus S|=4d^2/n\pm K\sqrt{d \log d}$ for some constant $K$ with probability $1-o(n^{-1})$. 
To prove concentration of $c[S]$, $c[G\setminus S]$, notice that since the colour class of $c$ is a $2$-factor, we can partition its edges into two sets $c_1,c_2$ both having at least $n/3$ different connected components with each component being either an edge or a path of length two. Then, $e(c[S]) = e(c_1[S]) + e(c_2[S])$ and each $e(c_i[S])$ is the sum of independent $\{0,1,2\}$-valued random variables with $\mathbb{E}[e(c_i[S])] \geq p^2n/3 \geq d^{1/2+2\alpha}/12$. Therefore, by Lemma \ref{chernoff}, each $e(c_i[S])=\mathbb{E}[e(c_i[S])]\pm K\sqrt{d\log d}$ with probability 
$1-e^{-\Omega(K^2\log d)}$. By taking $K$ to be large enough, this implies that for each colour $e(c[S])\geq d^{1/2+\alpha}$ with probability $1-o(n^{-1})$. The same argument gives $e(c[G\setminus S])= 4d^2/n\pm K\sqrt{d \log d}$ with probability $1-o(n^{-1})$. By taking a union bound over all vertices/colours, we have that with positive probability every colour has $e(c[S])\geq d^{1/2+\alpha}$, $e(c[G\setminus S])= 4d^2/n\pm K\sqrt{d \log d}$, and  every vertex $v$ has $|N(v)\setminus S|=4d^2/n\pm K\sqrt{d \log d}$. Fix a set $S$ for which all of these happen. 

Define an auxiliary $3$-uniform hypergraph $\mathcal{H}$ on $N \leq 5d$ vertices which consists of all edges of the form $(x,y,c)$, where $x,y$ are vertices in $G-S$ such that the edge $xy$ has colour $c$. By the choice of $p$, every vertex in $\mathcal{H}$ has degree $D \pm K\sqrt{d \log d}$, where $D:= 4d^2/n \geq d$. Moreover, since each colour class in $G$ was a $2$-factor, it is easy to check that $\mathcal{H}$ has codegree at most $2$. Therefore, by the well-known result of Kostochka and R\"{o}dl \cite{kostochka} (which extended the work of Alon, Kim and Spencer \cite{AKS}) on nearly-perfect matchings in hypergraphs, $\mathcal{H}$ has a matching covering all but at most $O\big(N/D^{(1-\alpha)/2}\big) = O\big(d^{(1+\alpha)/2} \big)$ many vertices. Every edge of this matching has a vertex representing a different colour $c$ and all but at most
$O\big(d^{(1+\alpha)/2} \big)$ colours are covered. So, deleting the vertices corresponding to colours 
gives a rainbow matching $M$ in $G-S$ of size at least $d-O\big(d^{(1+\alpha)/2} \big)$. Let $C_0$ denote the set of colours not used in $M$. Since each colour in $C_0$ has maximum degree at most two and has at least $d^{1/2+\alpha} >2 \cdot 2 \cdot  |C_0|=4|C_0|=O\big(d^{(1+\alpha)/2} \big)$ edges in $G[S]$, we can greedily find a rainbow matching $N \subseteq G[S]$ which uses all colours in $C_0$, so that $M \cup N$ is a full rainbow matching in $G$.
\end{proof}
\section{Non-bipartite Aharoni-Berger}\label{ABsection} 
In this section, we will prove Theorem \ref{generalABthm}. As the reader might already anticipate, we will first prove a weak asymptotic result and then use the sampling trick to finish. We make no serious attempt to optimize our error terms.

\begin{thm}\label{weaknonbipAB}
For all sufficiently large $n$, any $n$-edge-coloured multigraph in which each colour forms a matching of size $n$ contains a rainbow matching of size $n - 20n^{7/8}$. 
\end{thm}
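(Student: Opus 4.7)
The plan is to lift the maximum-matching switching argument of Bar\'at, Gy\'arf\'as, and S\'ark\"ozy underlying Proposition~\ref{baratbipAB} to the non-bipartite setting. Let $M$ be a maximum rainbow matching in $G$ and suppose, for contradiction, that $|M| = n - k$ with $k \geq 20\, n^{7/8}$. Set $U := V(M)$ and $W := V(G) \setminus U$, and let $C_0$ denote the set of $k$ unused colours. Maximality of $M$ precludes any $c \in C_0$ from having an edge contained in $W$; since each $c$-matching has $n$ edges on $2n$ vertices while $|U| = 2(n-k)$, a direct edge count forces at least $2k$ of the $c$-edges to cross between $U$ and $W$. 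Summing over missing colours, we obtain at least $2k^2$ ``missing-colour crossings''.

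For every $e = xy \in M$, the maximality of $M$ rules out a very specific augmenting configuration: if distinct missing colours $c_1, c_2 \in C_0$ provide edges $xw_1$ and $yw_2$ with $w_1 \neq w_2 \in W$, then $(M \setminus \{e\}) \cup \{xw_1, yw_2\}$ is a rainbow matching of size $n - k + 1$, a contradiction. A short case analysis then shows that, up to one exceptional colour, the $C_0$-crossings at $e$ either all use a single endpoint of $e$ (a \emph{one-sided} configuration), or all pass through a common pivot vertex $w^{*}(e) \in W$, reached from both $x$ and $y$ by parallel $C_0$-coloured edges (a \emph{funnel} configuration). Each configuration yields a local upper bound on the number of $C_0$-crossings at $e$, expressed in terms of $C_0$-degrees at one endpoint of $e$ or at the pivot $w^{*}(e)$.

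The final step is to combine the global lower bound $\sum_{e \in M} (\text{crossings at }e) \geq 2k^2$ with these local upper bounds, summing over pivot vertices $w \in W$ and over edges $e \in M$. The main obstacle, and the source of the weaker $n^{7/8}$ error term compared with the $n^{1/2}$ error of the bipartite case, is the funnel configuration: odd closed walks through the pivot do not automatically produce a larger rainbow matching, so excluding large clusters of edges that share a pivot requires a secondary switching step, in which one chases an augmenting walk of length five through a second-neighbour of the pivot. A careful two-level counting of pivots together with their $C_0$-neighbourhoods then produces an upper bound of order $n^{7/4}$ on the total number of crossings, contradicting $2k^2 \geq 800\, n^{7/4}$ for $k \geq 20\, n^{7/8}$ and $n$ large enough.
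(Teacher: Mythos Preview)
Your proposal correctly identifies the key obstruction that separates the non-bipartite case from the bipartite one --- the ``funnel'' configuration at a pivot $w^{*}\in W$ --- but what you have written is a plan, not a proof, and the plan as stated does not close. Two concrete gaps:

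\textbf{The length-five switching is not enough.} You assert that ``excluding large clusters of edges that share a pivot requires a secondary switching step, in which one chases an augmenting walk of length five''. There is no argument given for why length five suffices, and in fact the paper's proof needs alternating paths of length up to $k=n^{1/8}$. The reason is that a pivot $w^{*}$ can absorb arbitrarily many parallel $C_0$-edges (the multigraph has unbounded edge multiplicity), and a single secondary switch at a second neighbour of $w^{*}$ may simply land you at another pivot with the same problem. The paper handles this by building \emph{iterated} layers $V_1,V_2,\ldots$ of ``$(C_0,k)$-switchable'' vertices, and by a pigeonhole step it finds a level $i\le k$ where the next layer $V_{i+1}$ is small ($\le n/k$). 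Only at that level can one bound things, and doing so requires the notion of \emph{heavy} edges (repeated in at least $5k^3$ colours of $C_0$) together with a convexity/double-counting argument on a directed auxiliary graph. None of this machinery is present, or replaceable by a length-five switch, in your sketch.

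\textbf{The $O(n^{7/4})$ upper bound on crossings is unsubstantiated.} Your final inequality needs the total number of $C_0$-crossings to be $O(n^{7/4})$. But the local bounds you describe --- ``$C_0$-degree at one endpoint'' for one-sided edges and ``$C_0$-degree at the pivot'' for funnel edges --- only sum to the total $C_0$-degree, which is $2nk\asymp n^{15/8}$, not $n^{7/4}$. Getting below $n^{15/8}$ is precisely the hard part, and your ``careful two-level counting'' is not specified. In the paper the corresponding step is the convexity inequality $\sum_v d^{-}_H(v)^2 \ge e(H)^2/2n$ applied to a carefully constructed directed multigraph $H$ of non-heavy edges, combined with the fact that no non-switchable vertex can have many distinct $C_0$-neighbours in the relevant set; this is what forces a contradiction with the parameters $k=n^{1/8}$, $t=5k^3$.

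In short, the paper's argument is not an extension of Bar\'at--Gy\'arf\'as--S\'ark\"ozy with one extra switching layer; it is a genuinely multi-scale argument (layers $V_1,\ldots,V_k$ with $k=n^{1/8}$, heavy edges, switchable vertices) whose parameters are tuned to produce the exponent $7/8$. Your sketch would need to supply all of this, or a genuinely different mechanism, before it becomes a proof.
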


\noindent In order to prove the above proposition, let us first give some definitions and notation. As usual, the length of a path will be the number of edges in it. Given a matching $M$, we let $V(M)$ denote the vertices incident to some edge of the matching and for such a vertex $x$, we denote by $m(x)$ the vertex such that $x m(x)$ is an edge of $M$. Also, suppose $X_1, X_2, \ldots, X_r$ are sets in an edge-coloured graph, such that every $X_i$ is either a set of colours or a set of edges. Call a path $P=v_1v_2 \ldots v_{r+1}$ an $X_1 - X_2 - \ldots - X_r$ path  if for every $i$, the edge $v_i v_{i+1}$ has either a colour in $X_i$ or belongs to the set of edges $X_i$. Finally, given a rainbow matching $M$ and a set of colours $C'$, a vertex $v$ will be called \emph{$(C',k)$-switchable for $M$} if there are at least $5k^2$ many $C'-M-\ldots-C'-M$ rainbow paths of length at most $k$, which start at a vertex outside $M$, end at vertex $v$ and are colour and vertex-disjoint aside from the common last edge $v m(v)$.

\begin{proof}[ of Theorem \ref{weaknonbipAB}]
Let $G$ be a graph satisfying the assertion of the theorem, $M$ a maximal rainbow matching in $G$ and suppose it has size at most $n - 20n^{7/8}$. Let us denote the set of at least $20n^{7/8}$ colours not used in $M$ as $C_0$, $V_0$ the set of vertices not in $V(M)$ and set $k = n^{1/8}$. Hence $|C_0| \geq 20n/k$. Define now pairwise disjoint sets $V_1, V_2, \ldots \subseteq V(M)$  together with submatchings $M_j := \{x m(x) : x \in V_j\} \subseteq M$ in the following recursive manner. For each $j$, let $M_j$ be the set of edges in the matching $M' := M \setminus \bigcup_{l < j} M_l$ that have an endpoint $x$, which is $(C_0,k)$-switchable for $M'$ in the graph $G' := G - \bigcup_{1 \leq l < j} m(V_l)$. Let $V_j$ be the set of these endpoints (if both endpoints of some edge are switchable we fix one arbitrarily). Note that by definition, no two vertices in $\bigcup V_j$ are matched in $M$.

\begin{claim}\label{firstclaim}
If $j \leq k$, then there is no $C_0-M'-\ldots-M'-C_0$ rainbow path of length at most $k$ whose endpoints are in $V(G') \setminus V(M')$. In particular, there is no $C_0$-edge in $G'$ contained outside $V(M')$.
\end{claim}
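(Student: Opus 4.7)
The plan is to derive a contradiction with the maximality of $M$ by extending $P$ at its endpoints into a rainbow augmenting path for $M$ in $G$. First I would observe that, since the sets $V_l$ and $m(V_l)$ for $l<j$ are pairwise disjoint from each other (this is the reason the statement insists no two vertices of $\bigcup V_l$ are $M$-matched), one has $V(G')\setminus V(M')=V_0\cup\bigcup_{l<j}V_l$. So each endpoint $v_0,v_{2r+1}$ of the hypothetical path $P=v_0v_1\dots v_{2r+1}$ lies either in $V_0=V\setminus V(M)$ or in some $V_l$ with $l<j$.

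For each endpoint lying in some $V_l$, the fact that it was placed in $V_l$ supplies at least $5k^2$ rainbow alternating paths of length at most $k$, starting in $V_0$ and ending at that endpoint via the $M$-edge $v\,m(v)$, pairwise vertex- and colour-disjoint apart from this common last edge. I would select one such switching path $Q_1$ at $v_0$ and one $Q_2$ at $v_{2r+1}$ (omitting each if the corresponding endpoint is already in $V_0$), chosen so that $P,Q_1,Q_2$ are pairwise vertex- and colour-disjoint apart from their forced shared endpoints. Since the switching paths at a given endpoint are mutually vertex- and colour-disjoint apart from their common last edge, each of the $\leq k+1$ vertices and $\leq k$ colours of $P$ (and analogously of $Q_1$ when selecting $Q_2$) blocks at most one of the $5k^2$ candidates, so only $O(k)$ are blocked and the desired $Q_1,Q_2$ exist. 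Note also that if $v_{2r+1}\in V_l$ then automatically $m(v_{2r+1})\notin V(P)$, because $m(v_{2r+1})\in m(V_l)$ has already been deleted from $G'$; this ensures the join at $v_{2r+1}$ produces an actual path rather than a walk.

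Concatenating $Q_1^{-1}$, $P$, and $Q_2$ at the shared vertices $v_0$ and $v_{2r+1}$ yields a path $R$ in $G$ which alternates between $C_0$-colours and colours of $M$ (the alternation is correct at each join, since $P$ meets $v_0$ and $v_{2r+1}$ via $C_0$-edges while the $Q_i$ meet them via $M$-edges), and whose two endpoints lie in $V_0$. Its $C_0$-edges use distinct $C_0$-colours by our colour-disjoint choice, and its non-$C_0$ edges are distinct edges of $M$ by our vertex-disjointness, hence use distinct colours disjoint from $C_0$. Swapping $M$ along $R$---removing its $M$-edges from $M$ and adding its $C_0$-edges---therefore produces a rainbow matching of size $|M|+1$, contradicting the maximality of $M$. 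The ``in particular'' sentence is just the $r=0$ case, where $P$ is a single $C_0$-edge in $G'-V(M')$.

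The only piece of bookkeeping that needs a second look is that the switching paths $Q_i$ were defined in earlier graphs $G_l\supseteq G'$ relative to earlier submatchings of $M$, so may traverse vertices in $\bigcup_{l\leq l''<j} m(V_{l''})$ that have since been removed from $G'$; what one needs---and what holds immediately---is that they are still legitimate rainbow alternating paths in $G$ with respect to $M$, so that the augmentation is a valid operation on $M$ itself. The hypothesis $j\leq k$ enters only insofar as it keeps the construction in the regime where the counting bound $5k^2$ comfortably dominates the $O(k)$ obstructions coming from $P$, $Q_1$, $Q_2$.
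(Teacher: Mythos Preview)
There is a genuine gap. You claim that if an endpoint $v_0$ of $P$ lies in some $V_l$ with $1\le l<j$, then the switchability of $v_0$ supplies $5k^2$ alternating paths ``starting in $V_0$''. That is not what switchability gives you. By definition, $v_0\in V_l$ means $v_0$ is $(C_0,k)$-switchable for the matching $M'_{(l)}:=M\setminus\bigcup_{l'<l}M_{l'}$ in the graph $G'_{(l)}:=G-\bigcup_{1\le l'<l}m(V_{l'})$, so the $5k^2$ paths start at vertices outside $V(M'_{(l)})$, i.e.\ in $V_0\cup V_1\cup\cdots\cup V_{l-1}$, not necessarily in $V_0$. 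If the chosen $Q_1$ happens to start in some $V_{l'}$ with $1\le l'<l$, then that start vertex is still covered by $M$, and your concatenation $Q_1^{-1}PQ_2$ is \emph{not} an augmenting path for $M$: one (or both) of its endpoints lies in $V(M)$, and the swap does not increase $|M|$.

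The paper's proof deals with exactly this by iterating. After attaching $P_1^u$ at $u\in V_{l_u}$, the new endpoint lies in $\bigcup_{l<l_u}V_l$; if it is not in $V_0$, it lies in some $V_{l'}$ with $l'<l_u$, and one attaches a further switching path $P_2^u$ there, and so on. The level strictly decreases, so after at most $l_u<j$ extensions on each side one reaches $V_0$. This is precisely where the hypothesis $j\le k$ is used in earnest: the total length of the (up to $2j+1$) concatenated paths is at most $(2j+1)k$, and one needs $2(2j+1)k<5k^2$ so that at every extension step there remains an available switching path avoiding all vertices and colours already used. Your final paragraph, which says $j\le k$ only serves to make $5k^2$ ``comfortably dominate the $O(k)$ obstructions'', underestimates the role of this hypothesis for the same reason: with a single extension per side the obstruction is indeed $O(k)$, but the single extension does not suffice.
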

\begin{proof}
Suppose such a path $P$ exists and let $u,v$ be its endpoints so that for some $0 \leq l_u,l_v < j$, we have $u \in V_{l_u}$ and $v \in V_{l_v}$. Note that by definition of $V_{l_u}$, there exist at least $5k^2$ many $M- C_0 - \ldots - M- C_0$ rainbow paths of length at most $k$ which start at the edge $u m(u)$, end at a vertex in $\bigcup_{l < l_u} V_l$ and are colour and vertex-disjoint (aside from the first edge $u m(u)$). Since $2|P|\leq 2k < 5k^2$, one of these paths, which we denote by $P^{u}_1$, is colour and vertex-disjoint to $P$ (aside from the vertex $u$). Similarly, since $2|P|+2|P^{u}_1|\leq 4k < 5k^2$, we can next find a $M- C_0 - \ldots -M- C_0$ rainbow path, which we denote $P^{v}_1$, of length at most $k$ which starts at the edge $v m(v)$, ends at a vertex in $\bigcup_{l < l_v} V_l$ and is colour and vertex-disjoint to the path $P^{u}_1P$ (aside from the vertex $v$). Note we can continue this process and since $2 \cdot (2j+1) \cdot k < 5k^2$, we can ultimately find a path $P^{u}_r \ldots P^{u}_1 P P^{v}_1 \ldots P^{v}_s$ (for some $r,s \leq j$) which is a $C_0 - M - \ldots - M - C_0$ rainbow path with both endpoints in $V_0$. Note that this contradicts the maximality of $M$, as one can substitute the edges of $M$ belonging to this path with the $C_0$-edges in this path in order to construct a larger rainbow matching. 
\end{proof}
Let us now fix $i \leq k-1$ to be such that $|M_{i+1}| \leq n/k$ (such $i$ clearly exists since $\sum_j |M_j| \leq n$) and set $M' := M \setminus \bigcup_{l \leq i} M_l$ as well as the graph $G' := G - \bigcup_{1 \leq l \leq i} m(V_l)$. There are then at most $2|M_{i+1}|$ many $(C_0,k)$-switchable vertices for $M'$ in this graph. For simplicity, let us refer to these vertices from now on as just \emph{switchable} vertices. We first delete from $G'$ every $C_0$-coloured edge intersecting $V(M_{i+1})$ - note that from this, each colour in $C_0$ loses at most $2|M_{i+1}| \leq 2n/k$ edges. Therefore, each such colour now  has at least $n - |G \setminus  G'| - 2n/k \geq |M'| + 20n/k - 2n/k \geq |M'| + 18 n/k$ edges in the graph $G'$ (since $|G\setminus G'|=|M\setminus M'|\leq n-20n/k-|M'|$). Finally, we also define an edge in $G'$ to be \emph{heavy} if it is repeated in at least $t := 5k^3$ many colours belonging to $C_0$. 

Now, let us first trivially note that there cannot be a vertex $w \in V(M') \setminus V(M_{i+1})$ which has at least $5k^2$ distinct $C_0$-neighbours outside $V(M')$. Indeed, if this were the case, then $m(w)$ would be switchable, contradicting $w \notin V(M_{i+1})$. Secondly, recall that all the $C_0$-coloured edges touching $V(M_{i+1})$ were previously deleted and moreover, that by Claim \ref{firstclaim}, there is no $C_0$-coloured edge contained outside $M'$. Therefore, since $|V(M')| \leq 2n$ and $|C_0| \geq 20n/k$, the fact that no such vertex $w$ can exist implies that there are at most $|V(M')| \cdot (5k^2) \cdot t \leq 10k^2 t n \leq |C_0| \cdot (k^3 t/2)$ many $C_0$-coloured edges which are not heavy and have an endpoint outside $M'$. Let us delete all these edges. Note then that at least half of the colours $c \in C_0$ are such that at most $k^3 t$ of their edges were deleted. Call these colours $C_0'$, so that we have $|C_0'| \geq |C_0|/2$. Note next the following consequence.
\begin{claim}\label{secondclaim}
For every colour $c \in C'_0$, there are at least $14n/k$ many vertex disjoint $c-M'- \ldots -c- M'$ paths in $G'$ of length at most $k-2$, which start outside $V(M')$, whose $c$-edges are heavy and who end at vertex $v$ which is incident to a $c$-edge which is not heavy.
\end{claim}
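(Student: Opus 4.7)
My plan is to analyse the auxiliary subgraph $F := M' \cup c$ inside $G'$, where $c$ denotes the set of colour-$c$ edges that survive all earlier deletions. Since $M'$ is a (rainbow) matching and each colour class is a matching, $F$ has maximum degree $2$ and decomposes into vertex-disjoint alternating paths and even cycles. The deletion rounds preceding the claim guarantee that $|c|\ge |M'|+14n/k$ and that every $c$-edge with an endpoint in $V_0 := V(G')\setminus V(M')$ is heavy. Standard component counting then yields at least $14n/k$ \emph{type-a} paths in $F$, that is, path components whose number of $c$-edges exceeds the number of $M'$-edges by one; each such path has both endpoints in $V_0$ (since degree-1 vertices of $F$ that have no incident $M'$-edge must lie in $V_0$), and its two terminal $c$-edges are heavy.

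For each type-a path $P = u_0 u_1 u_2 \ldots$ I plan to traverse $P$ from $u_0$ and locate the first non-heavy $c$-edge $u_{2j} u_{2j+1}$. Whenever $2j \le k-2$, the prefix $u_0 u_1 \ldots u_{2j}$ is an alternating path of even length at most $k-2$ that starts in $V_0$, has only heavy $c$-edges, and terminates at the vertex $u_{2j}$ whose unique $c$-edge in $G'$ is the non-heavy edge $u_{2j} u_{2j+1}$. This is exactly a path of the form prescribed by the claim. If this left-truncation fails, I would repeat the argument symmetrically from the opposite endpoint of $P$.

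A type-a path is \emph{bad} when both truncations fail. Long bad paths (length exceeding $k-2$) have both end-windows of $(k-2)/2$ $c$-edges entirely heavy, consuming at least $k-2$ heavy $c$-edges each; since the colour class $c$ is a matching on $O(n)$ vertices, the total number of long bad paths is $O(n/k)$. Short bad paths (length at most $k-2$ with every $c$-edge heavy) are type-a components of $F' := M' \cup c_{\text{heavy}}$, and to bound them I would combine the inequality $2(e_{\text{in}}+f) + e_{\text{out}} \le 2|M'|$ (where $e_{\text{in}}, e_{\text{out}}$ count heavy $c$-edges inside $V(M')$ and across the $V_0/V(M')$ cut, and $f$ is the number of non-heavy $c$-edges) with the extra slack built into the earlier deletion round, which pushes $|c|$ somewhat above $|M'|+14n/k$ and gives a small buffer of unused type-a components. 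Since distinct components of $F$ share no vertices, the resulting good truncated paths are automatically vertex-disjoint, yielding at least $14n/k$ of them.

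The principal obstacle I anticipate is controlling the short bad paths: each such component uses only a constant number of heavy $c$-edges, so a pure density bound is inadequate. I expect the decisive input to be the combination of the matching identity above with the switchability constraint established earlier (every vertex of $V(M')\setminus V(M_{i+1})$ has fewer than $5k^2$ distinct $C_0$-neighbours in $V_0$), which together restrict how many short all-heavy type-a components can simultaneously fit inside $F$ and pin down the count of short bad paths at $o(n/k)$.
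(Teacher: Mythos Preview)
Your overall strategy (analyse $M'\cup c$, count type-a components, truncate at the first non-heavy $c$-edge) is the same as the paper's, but you miss the one observation that actually finishes the argument. A \emph{short bad path}, in your terminology, is a $c-M'-\ldots-M'-c$ path of length at most $k-2$ with both endpoints outside $V(M')$ and with \emph{every} $c$-edge heavy. Because each heavy edge is repeated in at least $t=5k^3>k-2$ colours of $C_0$, you can greedily assign pairwise distinct $C_0$-colours to the $c$-edges of such a path and obtain a $C_0-M'-\ldots-M'-C_0$ \emph{rainbow} path of length at most $k$ with both endpoints in $V(G')\setminus V(M')$. This is exactly what Claim~\ref{firstclaim} forbids. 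Hence there are \emph{no} short bad paths at all, and the elaborate counting you propose (the inequality $2(e_{\mathrm{in}}+f)+e_{\mathrm{out}}\le 2|M'|$ together with the switchability bound) is not needed---and, as stated, it is not clear it would even give an $o(n/k)$ bound, since a short bad path could consist of a single heavy $c$-edge.

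Once you use Claim~\ref{firstclaim} this way, the long-path case also becomes simpler than your two-sided truncation. The paper just discards \emph{all} type-a components of length exceeding $k-2$: each such component contains at least $(k-3)/2$ edges of $M'$, so there are fewer than $3|M'|/k\le 3n/k$ of them. Starting from the correct lower bound $|c|\ge |M'|+17n/k$ (you wrote $14n/k$, but the text before the claim gives $18n/k$ and only $k^3t=o(n/k)$ further edges are lost for $c\in C'_0$), this leaves at least $17n/k-3n/k=14n/k$ short type-a paths, each automatically good by the previous paragraph. Truncating each one at its first non-heavy $c$-edge then yields the required collection.
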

\begin{proof}
Consider $M_c \cup M'$, where $M_c$ is the matching of edges of colour $c$. This is a union of alternating paths/cycles between edges of $M_c$ and $M'$ such that in each path the number of edges from $M_c$ is at most one larger then the number of edges from $M'$. Because of the previous deletion of $C_0$-coloured edges and since $c \in C'_0$, $M_c$ has now size at least $|M'| + 18n/k - k^3 t \geq |M'| + 17n/k$, and so there are at least $17n/k$ many $c - M' - \ldots - M' - c$ vertex disjoint paths with both endpoints outside $M'$. Further, by disjointedness of these paths, at most $\frac{|M'|}{(k-1)/2-1} = 2|M'|/(k-3) < 3|M'|/k \leq 3n/k$ of them have size larger than $k-2$ and so, at least $14n/k$ of them have size at most $k-2$. Let $P$ be such a path. Note that $P$ cannot be such that all its $c$-edges are heavy. Indeed, if that were the case, since $t > k-2$, we could pick distinct colours in $C_0$ for those edges in order to produce a rainbow path which contradicts Claim \ref{firstclaim}. Further, note that the first $c$-edge of $P$, which has an endpoint outside $M'$, must be heavy - since otherwise, it would have been deleted just before the statement of this claim. Therefore, we are done since then $P$ must contain a subpath $P'$ of the form $c-M- \ldots - c - M$ which starts outside $V(M')$, whose $c$-edges are heavy and which ends at a vertex $v$ which is an endpoint of a $c$-edge that is not heavy. 
\end{proof}
Define now for each colour $c \in C'_0$, the set $V_c\subset V(M')$ to be the set of at least $14 n/k$ many vertices $v$ which are produced by the above claim. Our final objective is now to show that there must be a vertex $w \in V(M') \setminus V(M_{i+1})$ which has at least $5k^2+2$ distinct $C_0$-neighbours in one of the sets $V_c$. Indeed, note that if this holds, then by the above claim, there are at least $5k^2+2$ many $c-M' - \ldots - M'$ alternating vertex-disjoint paths $P_1,P_2, \ldots$ of length at most $k-2$, whose $c$-edges are heavy, which start outside $M'$, and whose ending vertices $v_1,v_2, \ldots$ are such that each $w v_l$ is a $C_0$-coloured edge. Let us remove from this collection two paths $P_i,P_j$ which possibly intersect $\{w,m(w)\}$. Notice that the edges $w v_l$ all have different colours, since they touch $w$. We can then use that $t \geq 5k^2 \cdot k$, to pick the $C_0$-colours for the heavy edges in all the paths, so that the new paths $P'_l := m(w) w v_l + P_l$ are colour-disjoint and of length at most $k$, thus implying that $m(w)$ is switchable, and so, contradicting $w \notin V(M_{i+1})$ (notice that each $P'_l$ is indeed a path because of the earlier removal of $P_i,P_j$ from the collection). 

To finish, suppose no such vertex $w$ exists. For each colour $c \in C_0'$, Claim \ref{secondclaim} implies that there are at least $14n/k$ many non-heavy $c$-edges which were not deleted, and thus contained in $V(M') \setminus V(M_{i+1})$, that have an endpoint in $V_c$. Define $H$ to be the directed multigraph formed by these $c$-edges for all $c \in C'_0$ and orienting them towards the vertex which belongs to $V_c$. For a vertex $v \in V(M')$, let $d^+_H(v),d^-_H(v)$ denote the out and in-degrees of it in $H$. Note in particular that by the properties of these edges, $d^{-}_H(v)$ is equal to the number of colours $c \in C'_0$ such that $v \in V_c$. Therefore,
$\sum_{c \in C'_0} \sum_{v \in V_c} d^-_H(v) = \sum_v d^-_H(v)^2$. Further, by convexity, we have
$$\sum_{v} d^-_H(v)^2 \geq \frac{1}{|V(M')|} \left(\sum_v d^-_H(v) \right)^2 \geq e (H )^2/2n \geq (14n/k \cdot |C'_0|)^2/2n \geq (900n^2/k^3) \cdot |C'_0| $$
and so, there exists a colour $c \in C'_0$ such that $\sum_{v \in V_c} d^-_H(v) \geq 900n^2/k^3$, which is then a lower bound for the number of $C_0$-coloured edges contained in $V(M') \setminus V(M_{i+1})$ which are not heavy and have an endpoint belonging to $V_c$. At the same time, since no vertex $w$ as described earlier can exist, it must be that there are at most $|V(M')| \cdot (5k^2+2) \cdot t <  12k^2 t n=60k^5 n$ of these edges, which is a contradiction since $900n^2/k^3 =900nk^5 >60k^5 n$. 
\end{proof}

\noindent We can now use the sampling trick to complete the proof.
\begin{proof}[ of Theorem \ref{generalABthm}]
Let $G$ be an $n$-edge-coloured multigraph such that each colour class is a matching of size $n + 20n^{15/16}$. 
Let $S \subseteq V(G)$ be a subset obtained by choosing each vertex independently with probability $p = 7n^{-1/16}$. 
For each colour $c$,  let $c[S]$, $c[G\setminus S]$ denote the sets of colour $c$ edges contained in $S$ and $G\setminus S$ respectively.
We have $\mathbb E(e(c[S]))=p^2(n + 20n^{15/16})\geq 49n^{7/8}$ and $\mathbb E(e(c[G\setminus S]))=(1-p)^2(n + 20n^{15/16})\geq (1-2p)(n + 20n^{15/16})\geq
n+2n^{15/16}$. Therefore,   by Lemma \ref{chernoff} and a union bound over all colours, we have that with positive  probability, colours have $e(c[S])\geq 40n^{7/8}$ and $e(c[G\setminus S])\geq n$. Fix a set $S$ satisfying this. 

By Proposition \ref{weaknonbipAB}, there is a rainbow matching $M$ in $G-S$ of size at least $n-20n^{7/8}$. Let $C_0$ denote the set of colours not used in $M$. Since each colour in $C_0$ is a matching and has more than $2\cdot |C_0|=40n^{7/8}$ edges in $G[S]$, we can greedily find a rainbow matching $N \subseteq G[S]$ which uses all colours in $C_0$. As a result, $M \cup N$ is a full rainbow matching in $G$.
\end{proof}
\section{Bounded multiplicity Grinblat problem}\label{multgrinb}
In this section, we will prove Theorem \ref{simplegrinblatthm}. As expected, our main focus here will be to prove the following weak asymptotic result. 
In order to state it, let us further define now a $(n,v,m)$-multigraph to be a $(n,v)$-multigraph with maximum edge-multiplicity at most $m$. 

\begin{thm}\label{weaksimplegrinblatthm}
For all sufficiently large $n$, every $(n,2n+2m + n^{3/4},m)$-multigraph contains a rainbow matching of size $n -\frac{1001n}{\sqrt{\log n}}$.
\end{thm}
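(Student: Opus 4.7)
My plan is to mirror the template of Theorem~\ref{weaknonbipAB}: consider a maximal rainbow matching $M$ of size $n - k$, suppose for contradiction that $k \geq \frac{1001 n}{\sqrt{\log n}}$, and build an augmenting structure through iterated alternating paths. Let $C_0$ denote the set of unused colours and $V_0 = V(G)\setminus V(M)$, so $|C_0| = k$ and $|V_0| \geq 2k$. The initial structural observation is that for each $c \in C_0$ maximality prevents a $c$-edge inside $V_0$, so each monochromatic $c$-clique contains at most one vertex of $V_0$. Since the cliques of colour $c$ span at least $2n + 2m + n^{3/4}$ vertices while $|V(M)| = 2(n-k)$, a direct count shows that $V_0$ meets at least $2m + n^{3/4} + 2k$ distinct $c$-cliques, each contributing a $c$-edge into $V(M)$; the multiplicity bound $m$ then forces these to hit at least $(2m+n^{3/4}+2k)/m$ distinct vertices of $V(M)$.

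The key engine is an iterative switching argument analogous to Theorem~\ref{weaknonbipAB}. Fix a path length $\ell \asymp \sqrt{\log n}$ and recursively define $V_j \subseteq V(M) \setminus \bigcup_{l < j} V_l$ to consist of those $x$ for which $m(x)$ is $(C_0, \ell)$-switchable in the residual graph $G_j := G - \bigcup_{l < j} m(V_l)$. Mirroring Claim~\ref{firstclaim}, I would show that for every $j \leq \ell$ the graph $G_j$ contains no short rainbow $C_0{-}M{-}\cdots{-}M{-}C_0$ path between two unmatched vertices: such a path, combined with the many colour- and vertex-disjoint switching certificates available at each endpoint, could be stitched into a genuine augmenting rainbow path, contradicting the maximality of $M$.

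Next, fix the first level $i < \ell$ with $|M_{i+1}| \leq n/\ell$ (which exists since $\sum_j |M_j| \leq n$). In the residual $G_{i+1}$, after removing $C_0$-edges incident to $V(M_{i+1})$ and declaring an edge \emph{heavy} if it is shared by $\omega(\ell^3)$ colours, a pigeonhole argument parallel to Claim~\ref{secondclaim} produces, for a positive proportion of $c \in C_0$, at least $\sim n/\ell$ vertex-disjoint short alternating paths from $V_0$ ending at a non-heavy $c$-edge. The multiplicity hypothesis $\leq m$ converts ``many $c$-edges'' into ``many distinct endpoints'', so a convexity double count over $V(M')$ yields either a vertex $w \in V(M')\setminus V(M_{i+1})$ with at least $5\ell^2 + 2$ distinct $C_0$-neighbours among these path endpoints — which forces $m(w)$ switchable and contradicts $w \notin V(M_{i+1})$ — or an aggregate edge count that violates the multiplicity bound.

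The main obstacle I anticipate is calibrating the path length $\ell$: each stitching step costs a factor exponential in $\ell$ to avoid colour and vertex collisions among the switching certificates, while each colour delivers only $\sim m$ edges at each endpoint, so balancing these competing costs forces $\ell \asymp \sqrt{\log n}$, producing exactly the $n/\sqrt{\log n}$ error term in the statement. The multiplicity bound $m$ is used crucially throughout, since it is what converts the ``many $c$-edges'' guarantee into the ``many distinct $c$-neighbours'' that each switching operation actually consumes; without it, the tight $2n + 2m + n^{3/4}$ vertex budget would not support the iteration.
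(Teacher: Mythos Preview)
Your plan has a genuine gap at the step mirroring Claim~\ref{secondclaim}. That claim produces $\Omega(n/k)$ vertex-disjoint $c-M'-\ldots-c-M'$ alternating paths by looking at $M_c \cup M'$, and the count works only because the colour-$c$ matching $M_c$ has size at least $|M'| + \Omega(n/k)$. In the Grinblat setting this fails outright: a colour class is a disjoint union of cliques, and if colour $c$ consists mostly of triangles its matching number is only about $(2n+2m+n^{3/4})/3 \approx 2n/3$, which is strictly smaller than $|M'| \approx n$. So $M_c \cup M'$ yields no excess of alternating paths at all, and the engine feeding the convexity step never starts. You have also misidentified the role of the multiplicity bound. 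It does \emph{not} convert ``many $c$-edges'' into ``many distinct $c$-neighbours'' --- within a single colour the disjoint-clique structure already forces that, and indeed the $c$-edges from $V_0$ to $V(M)$ you describe already form a matching irrespective of $m$. What the bound $m$ actually buys is that a $(n,2n+2m,m)$-multigraph contains a (not necessarily rainbow) matching of size $n$: this is Lemma~\ref{matching}, a nontrivial switching argument in which the threshold ``$\geq m+1$ edges'' is exactly what guarantees two of them land on different vertices.

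The paper's proof is accordingly structured quite differently from your template. It first establishes Lemma~\ref{matching} and its Corollary~\ref{Mprimelemma}, which supply many edge-disjoint \emph{multicoloured} matchings of size roughly $|C_0| + |N|$ for any $N \subseteq M$; these serve as the replacement for the unavailable monochromatic matchings $M_c$. The iteration is then organised not around switchable vertices but around growing families of disjoint ``blocks'' --- sets of vertices reachable from outside $M$ by short $M$-alternating paths whose off-$M$ edges are each repeated in $\geq n^{1/10}$ unused colours --- via a four-option step lemma (Lemma~\ref{stepprocedure}) that alternates the multicoloured matchings against the block matchings, together with an inductive Property~$P_i$ certifying that any small transversal of the current blocks can be avoided by some maximal rainbow matching in $G$. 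The $\sqrt{\log n}$ in the error term arises from balancing the number of iterations (each adding $\sim n/\sqrt{\log n}$ to the total block size) against the exponential growth of individual block sizes, not from the single-colour switching budget you describe.
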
 
\noindent The proof of the strong asymptotic result then follows as an application of the sampling trick.
\begin{proof}[ of Theorem \ref{simplegrinblatthm}]
Let $G$ be a $\left(n,2n+2m+\frac{1500n}{(\log n)^{1/4}},m \right)$-multigraph. We can assume that each of its monochromatic cliques are either a $K_3$ or a $K_2$, since every clique can be partitioned into disjoint edges and at most one triangle which cover the same set of vertices. 
For each colour $c$, let then $t_c$ denote the number of triangles in its colour class and $l_c$ the number of edges, so that $3t_c + 2l_c \geq 2n+2m+\frac{1500n}{(\log n)^{1/4}}$. Let $S \subseteq V(G)$ be a random set obtained by choosing each vertex independently with probability $p = 100(\log n)^{-1/4}$.
For each colour $c$, let $c[S]$, $c[G\setminus S]$ denote the sets of colour $c$ edges contained in $S$ and $G\setminus S$ respectively. Let $|c[S]|$, $|c[G\setminus S]|$ be the number of non-isolated vertices in each graph. By standard considerations, much like those done in Section \ref{grinblatsec}, it is easy to show that $\P \left(|c[S]|< \frac{4004n}{\sqrt{\log n}} \right) \leq o(n^{-1})$ and $\P(|c[G\setminus S]|< 2n+2m+n^{3/4})\leq o(n^{-1})$ for each colour $c$. By the union bound, with positive probability none of these events happen for any of the colours. 

Thus there exists a set $S$ with $|c[S]|\geq \frac{4004n}{\sqrt{\log n}}$ and $|c[G\setminus S]|\geq 2n+2m+n^{3/4}$ for all colours $c$. By Theorem \ref{weaksimplegrinblatthm}, there is a rainbow matching $M$ in $G-S$ of size at least $n-\frac{1001n}{\sqrt{\log n}}$. Let $C_0$ denote the set of colours not used in $M$. Since each colour class in $C_0$ has maximum degree two and more than $4|C_0|$ edges in $G[S]$, we can greedily find a rainbow matching $N \subseteq G[S]$ which uses all colours in $C_0$. As a result, $M \cup N$ is a full rainbow matching in $G$.
\end{proof}
\subsection{A matching problem}
Before going into the proof of Theorem \ref{weaksimplegrinblatthm}, we will first need the following simpler result. In this section, we prove that one can always find a matching of the size $n$, although it might not be a rainbow one.
\begin{lem}\label{matching}
Let $G$ be a $(n,2n+2m,m)$-multigraph. Then, it contains a matching of size $n$.
\end{lem}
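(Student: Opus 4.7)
Plan. I proceed by contradiction: assume the maximum matching $M$ of $G$ satisfies $|M|\le n-1$, and set $U:=V(G)\setminus V(M)$, which is independent in $G$ (otherwise $M$ could be enlarged). Following the same reduction used in the proof of Theorem~\ref{grinblatthm}, I may assume every monochromatic clique is either a $K_2$ or a $K_3$, so for every colour $c$ we have $3t_c+2l_c\ge 2n+2m$, where $t_c$ and $l_c$ count the triangles and matching edges of colour $c$.

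The first quantitative input is a lower bound on the ``$U$-footprint'' of each colour. Since each colour-$c$ clique has at most one vertex in $U$ and $|V(M)|\le 2n-2$, every colour has at least $2m+2$ vertices in $U$. Each such $U$-vertex lies in a $c$-clique together with at least one vertex of $V(M)$, so it is an endpoint of at least one colour-$c$ edge crossing from $U$ to $V(M)$. Summing over colours, the total number of colour-labelled crossing edges is at least $n(2m+2)=2n(m+1)$.

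The second input is an upper bound coming from the multiplicity constraint together with the absence of augmenting paths. For every matching edge $vv'\in M$, the existence of distinct $u\in N(v)\cap U$ and $u'\in N(v')\cap U$ would yield the length-$3$ augmenting path $uvv'u'$, so either one of $v,v'$ has no $U$-neighbour, or the two endpoints share a single common $U$-neighbour. In the latter ``paired'' case, the number of crossing multi-edges incident to the pair is at most $2m$; in the ``unpaired'' case, only the heavy endpoint contributes, and its multi-edges to $U$ must respect the multiplicity cap pair-by-pair. Iterating this augmenting-path analysis along longer alternating walks (handling odd cycles via Edmonds' blossom argument, or more cleanly by invoking the Tutte-Berge formula directly) bounds the $U$-degree of each heavy vertex, so the aggregate crossing count can be upper bounded in terms of $m$, $|U|$, and $|M|$.

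The main obstacle is the final bookkeeping for the unpaired edges, and my cleanest route is via Tutte-Berge. Pick a set $S\subseteq V(G)$ witnessing the deficiency $o(G-S)-|S|\ge |V(G)|-2n+2$, and use the fact that each monochromatic clique lives in $S$ together with at most one component of $G-S$ (cliques are connected in $G$). For each vertex $v$ in an odd component $C$ of $G-S$, the number of colours containing $v$ is at most $m(|C|-1+|S|)$, since such a colour forces an edge from $v$ inside $C\cup S$ and the multiplicity is at most $m$ per pair. Summing these bounds over all odd components and comparing with the lower bound $n(2n+2m)$ on the total colour-vertex incidences available forces $|V(G)|$ to exceed what the Tutte-Berge inequality allows, giving the desired contradiction and hence $\nu(G)\ge n$.
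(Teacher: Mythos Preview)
Your lower bound is correct: since $U$ is independent and $|V(M)|\le 2n-2$, every colour has at least $2m+2$ vertices in $U$, each sending a colour-$c$ edge into $V(M)$; this is exactly the paper's Claim~\ref{triangles} specialised to $A=\emptyset$. The length-$3$ augmenting-path dichotomy (paired vs.\ unpaired) is also fine. But the argument stops being a proof at the sentence ``Iterating this augmenting-path analysis \ldots\ bounds the $U$-degree of each heavy vertex.'' This assertion is false as stated: a heavy vertex $v$ (whose $M$-partner $v'$ has no $U$-neighbours) can have arbitrarily many distinct $U$-neighbours without creating any augmenting path, provided the alternating walks leaving $v'$ into $V(M)$ never return to $U$. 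So no pointwise bound on $U$-degrees follows, and without one the aggregate upper bound on crossing edges is simply missing.

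The Tutte--Berge paragraph does not close the gap either. Your per-vertex bound $m(|C|-1+|S|)$ is correct, but summing it over a component $C$ gives $m|C|(|C|-1+|S|)$, and the $|C|^2$ terms are completely uncontrolled. More fundamentally, Tutte--Berge says $o(G-S)-|S|=|V(G)|-2\nu(G)$; it does \emph{not} furnish an upper bound on $|V(G)|$ independent of $\nu(G)$, so there is nothing for the incidence count to ``exceed''. In the extreme case where every odd component is a singleton and $|S|\le n-1$, your inequalities only yield that there are roughly $n/m$ singletons --- no contradiction.

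The paper's proof is indeed an iterated augmenting-path argument, but the iteration is organised differently. One recursively defines $V_1,V_2,\ldots\subseteq V(M)$, where $V_i$ collects endpoints $x$ of edges of $M\setminus\bigcup_{j<i}E_j$ whose partner $m(x)$ has at least $m+1$ edges into $V_0\cup\bigcup_{j<i}V_j$. The key claim, proved by induction, is that \emph{any two} vertices of $\bigcup_{j\le i}V_j$ can be simultaneously uncovered by some maximum matching; the threshold $m+1$ is precisely what guarantees a switching target distinct from the other vertex to be freed. This forces $V_0\cup\bigcup_j V_j$ to be independent, and then your crossing-edge lower bound --- now applied with the growing set $A=\bigcup_{j<i}V_j$ rather than $A=\emptyset$ --- together with averaging shows $E_i\neq\emptyset$ at every stage. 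Hence the process terminates only when $\bigcup_j E_j=M$, i.e.\ $|A|=|M|$, which contradicts Claim~\ref{triangles}. A single global count is not enough; one genuinely needs the set $A$ to grow.
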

\begin{proof}
For contradiction sake, let $M$ be a maximal matching in $G$ and suppose that $|M| \leq  n-1$. We will let $V_0$ denote the set $V(G) \setminus V(M)$ and will denote the edges in $E[V_0,V(M)]$ as \emph{external}. First, we give the following claim, which will essentially allow us to forget about the various cliques which can appear in the colour class of $c$, and only consider edges. 
\begin{claim}\label{triangles}
Let $c \in C$ be a colour for which there exists a set $A \subseteq V(M)$, such that no two vertices in $A$ are matched by $M$ and such that there are no $c$-edges contained in $V_0 \cup A$. Then, there exist at least $2m+2$ pairwise disjoint $c$-edges in $E[V(M) \backslash (A \cup m(A)),V_0 \cup A]$.
\end{claim}
\begin{proof}
Define the set $S := V(M) \setminus A$ and note that there is no $c$-edge contained outside $S$. Moreover, the colour class of $c$ has at least $2|M|+2m+2-|S| = |A|+2m+2$ vertices outside $S$ and so, there is a $c$-edge from each such vertex to $S$. Finally, since the colour class of $c$ is a disjoint union of non-trivial cliques, these edges must be pairwise disjoint, as otherwise, there would be a $c$-edge connecting their endpoints outside $S$. Therefore, there exists a $c$-coloured matching in $E[S,V \setminus S]$ of size at least $|A|+2m+2$. Since at most $|A|$ edges of this matching intersect $m(A)$, we are done.
\end{proof}
Note that in particular, the above claim implies a contradiction when $|A| = |M|$. Therefore, the goal of the subsequent arguments is to construct such a set $A$. Let us then first recursively define sets $E_1, E_2, \ldots \subseteq M$ and $V'_1,V_1,V'_2, V_2, \ldots \subseteq V(M)$ in the following way.
\begin{itemize}
    \item $E_1$ is the set of edges in $M$ which have an endpoint incident to at least $m+1$ edges which go to $V_0$. We let $V'_1$ be the set of those endpoints and $V_1 = m(V'_1)$.
    \item Having defined the sets $E_1, \ldots, E_{i-1}$ and while $\bigcup_{j < i} E_j \neq M$, we define $E_{i}$ to be the set of edges in $M \setminus \bigcup_{j < i} E_j$ which have an endpoint incident to at least $m+1$ edges which go to $\bigcup_{j < i} V_j$. We then let $V'_{i}$ be the set of those endpoints and $V_{i} = m(V'_i)$. 
\end{itemize}
\noindent As a result of the above definitions, note the following claim.
\begin{claim}
For any two vertices $u,v \in \bigcup_{l \leq i} V_l$, there is a maximal matching $M'$ such that $u,v \notin V(M')$ and $V(M \Delta M') \subseteq V_0 \cup \bigcup_{l \leq i} (V_l \cup V'_l)$. 
\end{claim}
\begin{proof}
We prove this by induction on $i$. For $i = 0$, the statement is trivial by taking $M' = M$. Suppose then that $i \geq 1$ and the statement is true for all smaller values. We can then assume that $u \in V_i$ and $v \in V_j$ for some $j \leq i$. Now, by assumption, $m(u)$ has at least $m+1$ edges which go to $\bigcup_{j < i} V_j$. At most $m$ of these edges go to $v$ and so, there is an edge $m(u)u'$ with $v \neq u' \in \bigcup_{l < i} V_l$. Now, if $j < i$, then we apply the induction hypothesis with the vertices $v,u'$. This gives a maximal matching $M'$ avoiding $v, u'$ and containing the edge $m(u)u$. Replacing $m(u)u$ by $m(u)u'$ gives a new maximal matching satisfying the claim. 
If $j=i$, then with a similar argument as above, there is an edge $m(v)v'$ with $u' \neq v' \in \bigcup_{l < i} V_l$. We now apply the induction hypothesis with the vertices $v',u'$. This gives a maximal matching $M'$ avoiding $v', u'$ and containing the edges $m(u)u, m(v)v$. Replacing these two edges by $m(u)u', m(v)v'$ gives a new maximal matching satisfying the claim. 
\end{proof}
Now, suppose that at some $i$, we have $\bigcup_{1 \leq j < i} E_j \neq M$. It is easy to note that the above claim implies that there is no edge in $\bigcup_{0 \leq j < i} V_l$. Indeed, for any such edge $e=(u,v)$ there is a maximal matching $M'$ with $u, v \not \in V(M')$ and therefore $M'$ can be extended using $e$ . Thus, letting $A = \bigcup_{1 \leq j < i} V_l$, we can apply Claim \ref{triangles} to get that each colour has at least $2(m+1)$ pairwise disjoint $c$-edges in $E[V(M) \backslash (A \cup m(A)),V_0 \cup A]$. By averaging over the $\leq 2n$ vertices of $V(M) \backslash (A \cup m(A))$, there exists an edge $e \notin \bigcup_{1 \leq j < i} E_j$ with an endpoint incident to at least $m+1$ edges going to $A \cup V_0$. Therefore, by definition, $E_i \neq \emptyset$. To finish, there must then exist some $i $, such that $\bigcup_{1 \leq j < i} E_j = M$, which is a contradiction by Claim \ref{triangles} with $A = \bigcup_{1 \leq j < i} V_j$, so that $|A| = |M|$.
\end{proof}
As a corollary, note the following.
\begin{cor}\label{Mprimelemma}
Let $G$ be a $(n,2n+2m+n^{3/4},m)$-multigraph, $M$ be a rainbow matching in $G$ and $C_0$ the set of colours not used in it. Then, for any $N \subseteq M$, there are at least $n^{1/4}$ many edge-disjoint matchings of size $|C_0|+ |N| - n^{3/4}$ using colours in $C_0 \cup C(N)$ and edges which are not contained in $V(M \setminus N)$.
\end{cor}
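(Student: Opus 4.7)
The plan is to iterate Lemma~\ref{matching} roughly $n^{1/4}$ times on an auxiliary multigraph derived from $G$. Set $n':=|C_0|+|N|$; since $M$ is rainbow, $|M|=n-|C_0|$, and thus $|M|-|N|=n-n'$, which is the arithmetic identity the whole proof hinges on. Let $H'$ be the subgraph of $G$ consisting of the edges whose colour lies in $C_0\cup C(N)$ and which are not contained in $V(M\setminus N)$. The first task is to verify that $H'$ is a $(n',\,2n'+2m+n^{3/4},\,m)$-multigraph. Using the standard reduction to $K_2$'s and $K_3$'s, a short case analysis shows that within any clique $K$ of colour $c$, removing the edges of $K$ contained in $V(M\setminus N)$ destroys at most $|V(K)\cap V(M\setminus N)|$ vertices of the best sub-clique decomposition of what survives (a $K_2$ with both endpoints in $V(M\setminus N)$ loses both; a $K_3$ with two endpoints inside leaves a path from which a $K_2$ still survives, losing only $1$). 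Since cliques of a fixed colour are pairwise vertex-disjoint, summing yields a total loss of at most $|V(M\setminus N)|=2(n-n')$, so $v_c(H')\geq 2n+2m+n^{3/4}-2(n-n')=2n'+2m+n^{3/4}$.

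Next I would build the matchings iteratively. Let $H_1:=H'$ and, at step $i$, call a colour \emph{alive} in $H_i$ if its colour class in $H_i$ still admits a disjoint-clique sub-decomposition covering at least $2n'+2m$ vertices. Restricted to the alive colours, $H_i$ contains an $(n_i^{\mathrm{alive}},\,2n'+2m,\,m)$-multigraph, and $2n'+2m\geq 2n_i^{\mathrm{alive}}+2m$, so Lemma~\ref{matching} produces a matching $M_i$ of size $n_i^{\mathrm{alive}}$; I then set $H_{i+1}:=H_i\setminus E(M_i)$ and continue until $i=\lceil n^{1/4}\rceil$.

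The remaining point is to lower-bound $n_i^{\mathrm{alive}}$. The key local estimate is that removing one colour-$c$ edge from the current decomposition costs at most $2$ vertices in $v_c$ (a $K_2$ vanishes, losing $2$; removing one edge of a $K_3$ still leaves an embedded $K_2$, losing only $1$). Hence a single iteration decreases $\sum_c v_c$ by at most $2|M_i|\leq 2n'$, and after $n^{1/4}$ iterations the cumulative loss is at most $2n^{5/4}$. A colour can become dead only after losing more than $n^{3/4}$ vertices, so the number of dead colours is at most $2n^{5/4}/n^{3/4}=2n^{1/2}\leq n^{3/4}$ for $n$ large. Therefore $n_i^{\mathrm{alive}}\geq n'-n^{3/4}$ throughout, and the produced matchings $M_1,\ldots,M_{\lceil n^{1/4}\rceil}$ are edge-disjoint by construction, each of size at least $n'-n^{3/4}=|C_0|+|N|-n^{3/4}$, use only colours from $C_0\cup C(N)$, and avoid edges contained in $V(M\setminus N)$, as required. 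I do not expect any serious obstacle here; the only mildly delicate step is the clique-structure vertex-loss accounting in the first paragraph, and it is precisely the identity $|M|-|N|=n-n'$ that matches the slack $n^{3/4}$ in $H'$ to the budget $n^{3/4}$ needed over $n^{1/4}$ applications of Lemma~\ref{matching}.
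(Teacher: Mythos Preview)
Your proof is correct and follows essentially the same strategy as the paper: build the auxiliary $(n',2n'+2m+n^{3/4},m)$-multigraph (with $n'=|C_0|+|N|$) and iterate Lemma~\ref{matching} roughly $n^{1/4}$ times, keeping track of how many vertices each colour class has lost. The vertex-loss accounting you do in both paragraphs (at most $|V(K)\cap V(M\setminus N)|$ per clique for the initial deletion, at most $2$ per removed edge thereafter) is exactly right.

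The one genuine difference from the paper is in how colours that lose too many vertices are handled. The paper, after each application of Lemma~\ref{matching}, \emph{actively} deletes the at most $\sqrt{n}$ colours that appear more than $\sqrt{n}$ times in the matching just found; this guarantees each surviving colour loses at most $2\sqrt{n}$ vertices per round. You instead let colours become ``dead'' \emph{passively} once their clique-decomposition vertex count drops below $2n'+2m$, and control the number of dead colours by a single global budget: the cumulative vertex loss over all $n^{1/4}$ rounds is at most $2n'\cdot n^{1/4}\leq 2n^{5/4}$, and each dead colour accounts for more than $n^{3/4}$ of this, so there are at most $2\sqrt{n}$ dead colours in total. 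Both mechanisms deliver the required $n'-n^{3/4}$ lower bound on the matching sizes; your averaging argument is arguably a touch cleaner since it avoids tracking per-round colour multiplicities in the matchings.
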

\begin{proof}
Delete all edges contained in $V(M \setminus N)$ and all colours not in $C_0 \cup C(N)$ (in this proof when we delete edge from a triangle of some color we substitute triangle with one of its non-deleted edges so that every color class is still union of cliques). Note this produces a $(|C_0 \cup C(N)|,2|C_0 \cup C(N)|+2m+n^{3/4},m)$-multigraph $G'$. Now, the desired consequence follows by applying the previous lemma to $G'$ at least $n^{1/4}$ many times and after each iteration deleting the maximal matching found along with those colours which appear more than $\sqrt{n}$ (of which there are at most $\sqrt{n}$ many) many times in that matching.
\end{proof}
\subsection{Building blocks}
One can view Corollary \ref{Mprimelemma} as somewhat of a strong indicator for Theorem \ref{weaksimplegrinblatthm} since we might suspect from the previous section on the non-bipartite Aharoni-Berger problem, that having many large matchings, despite not being necessarily rainbow, should be favorable in some way. Indeed, this will be the key observation here. Before diving into that, we will need to make some preliminary considerations first.

In this section, we will give some new definitions, but the reader should keep in mind that some notation will carry over from Section \ref{ABsection}. Throughout this section, we will always be considering an underlying multigraph $G$ which is edge-coloured with $n$ colours so that it is locally $2$-bounded, that is, there is no vertex incident on more than two edges of the same colour. We also require, for simplicity, that the edges of each colour form a simple graph. Note in particular, that any $(n,v,m)$-multigraph is an example of this. Let us start with a definition.
\begin{defn}
Given a rainbow matching $M$ and a set of colours $C$, a \emph{$(C,t, r)$-block for $M$} is a pair $(B,M')$ where $B$ is a set of vertices and $M' \subseteq M$ such that the following hold.
\begin{enumerate}
    \item $B$ contains exactly one vertex $v \notin V(M)$ and $M' = \{x m(x) : x \in B \setminus \{v\}\}$.
    \item $|M'| \leq t$.
    \item For all vertices $x \in B$, there is a $M'-C-\ldots -M'-C$ path of length at most $r$, starting at $x$ and ending at $v$, whose $C$-edges are each repeated in at least $n^{1/10}$ many colours of $C$. 
\end{enumerate} 
We will usually refer to the block as just the set $B$. We define the set of colours of the block to be $C(B) := C(M')$. We also define, $M(B) := M'$ and $v_{B} := v$. Two blocks $B,B'$ are said to be \emph{disjoint} if the sets $V(M(B)) \cup \{v_{B}\}$ and $V(M(B')) \cup \{v_{B'}\}$ are disjoint. 
\end{defn}
\noindent Notice that in particular, it follows from the definition that for all vertices $v \notin V(M)$, the pair $(v,\emptyset)$ is a $(C,0,0)$-block for $M$. 
\noindent We will now introduce two ways of iteratively constructing blocks. The first is simple to check, and we thus omit its proof.
\begin{lem}\label{blockconst1}
Let $(B,M')$ be a $(C,t,r)$-block for $M$ and $v := v_B$. Let $w_1, w_2, \ldots w_k \in V(M \setminus M')$ and $z_1,z_2, \ldots z_k \in B$ be distinct vertices such that for each $i$, there are at least $n^{1/10}$ many $C$-colours repeated in the edge $m(w_i)z_i$. Then, $(B \cup \{w_i : i \leq k\},M' \cup \{w_im(w_i) : i \leq k\})$ is a $(C,t+k,r+2)$-block for $M$.
\end{lem}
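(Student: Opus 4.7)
The plan is to verify the three defining conditions of a $(C, t+k, r+2)$-block directly, with the construction of new alternating paths being the only nontrivial piece. I would open by noting that enlarging the matching subset from $M'$ to $M'\cup\{w_im(w_i): i\leq k\}$ and the vertex set from $B$ to $B\cup\{w_i : i\leq k\}$ does not destroy any existing structure: paths certified for the old block still use only edges in the old $M'$ and $C$-edges of sufficiently high multiplicity, hence they remain valid witnesses for vertices $x\in B$ in the enlarged block.

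For condition (1), observe that since every $w_i\in V(M\setminus M')\subseteq V(M)$, the only vertex of $B\cup\{w_i\}$ outside $V(M)$ is still $v=v_B$, and $\{x m(x) : x\in (B\cup\{w_i\})\setminus\{v\}\}$ equals $M'\cup\{w_im(w_i) : i\leq k\}$ by construction. Condition (2) is immediate from $|M'|\leq t$ together with the distinctness of the $w_i$.

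The heart of the argument is condition (3) for the new vertices $w_1,\dots,w_k$. For each $i$, I would concatenate the length-$2$ prefix $w_i \to m(w_i)\to z_i$ with an existing $M'$–$C$–$\dots$–$M'$–$C$ path $P_{z_i}$ of length at most $r$ from $z_i$ to $v$. The first edge $w_im(w_i)$ lies in the enlarged $M'$, the second edge $m(w_i)z_i$ is by hypothesis a $C$-edge repeated in at least $n^{1/10}$ colours of $C$, and $P_{z_i}$ begins with an $M'$-edge (or is empty, in which case $z_i=v$ and the two-edge prefix itself is a valid $M'$–$C$ path to $v$). Hence the concatenation is an alternating $M'$–$C$–$\dots$–$M'$–$C$ path of length at most $r+2$ from $w_i$ to $v$ with the correct multiplicity property on its $C$-edges.

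The only point that might appear subtle is that the concatenation must be a \emph{simple} path. Here I would use that every internal vertex of $P_{z_i}$ lies in $V(M')$ (because each $M'$-edge is incident to two vertices of $V(M')$ and each endpoint of a $C$-edge interior to the path is incident to an $M'$-edge), whereas $w_i, m(w_i)\in V(M\setminus M')$ by assumption. Consequently $w_i$ and $m(w_i)$ are disjoint from the vertex set of $P_{z_i}$, so the prepended two-edge segment does not repeat a vertex. The distinctness of the $z_i$ is not needed for verifying condition (3) vertex-by-vertex, which is why the lemma does not require any interaction between the different new paths. As the authors note, no real obstacle arises; the proof is entirely a careful bookkeeping of which edges belong to $M'$ versus $C$ along the augmented paths.
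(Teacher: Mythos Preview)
Your proposal is correct and is exactly the natural verification the paper has in mind; indeed, the paper explicitly omits the proof of this lemma as ``simple to check'', and your argument supplies precisely that check. The only genuine content is building the length-$(r+2)$ alternating path for each new vertex $w_i$ by prepending $w_i\,m(w_i)\,z_i$ to the existing witness path for $z_i$, and your observation that $w_i, m(w_i)\in V(M\setminus M')$ while $V(P_{z_i})\subseteq V(M')\cup\{v\}$ cleanly handles simplicity.
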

\begin{lem}\label{blockconst2}
Let $(B,M')$ be a $(C,t,r)$-block for $M$ and $v := v_B$. Let $P_1, P_2, \ldots \subseteq B$ be vertex disjoint paths of the form $C - M'- \ldots - M'-C$ whose endpoints are vertices $w$ with either $w = v$ or $w \in V(M')$ and $m(w) \notin B$ and whose $C$-edges are each repeated in at least $n^{1/10}$ many colours of $C$. Then, for each $i$, there is a choice of an endpoint $w_i$ of $P_i$ with $w_i \in V(M')$ and $m(w_i) \notin B$ so that $\left(B \cup \{m(w_1),m(w_2), \ldots\},M' \right)$ is a $(C,t,r+\max_i |P_i|)$-block for $M$.
\end{lem}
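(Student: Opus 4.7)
The plan is to verify directly that the pair $(B', M')$, with $B' := B \cup \{m(w_1), m(w_2), \ldots\}$, satisfies the three defining conditions of a $(C, t, r + \max_i |P_i|)$-block. Conditions 1 and 2 require essentially no work: each $w_i \in V(M')$ means its match $m(w_i)$ already lies in $V(M)$, so $v$ remains the unique vertex of $B'$ outside $V(M)$; and the edge $w_i m(w_i)$ was already in $M'$, so adding $m(w_i)$ to $B$ introduces no new $M$-edges incident to $B' \setminus \{v\}$, preserving $M' = \{x m(x) : x \in B' \setminus \{v\}\}$ and $|M'| \le t$.

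Note also that the constraint $w_i \in V(M')$ forces the choice of $w_i$ to a non-$v$ endpoint of $P_i$ (since $v \notin V(M) \supseteq V(M')$), and such an endpoint always exists because $P_i$ has two endpoints, at most one of which can equal $v$. The real work is condition 3. Vertices already in $B$ keep their old paths, which have length at most $r \le r + \max_i |P_i|$. For each new vertex $m(w_i)$, produce the required $M' - C - \ldots - M' - C$ path to $v$ by concatenating three segments: first the $M'$-edge $m(w_i) w_i$; then the path $P_i$ itself, traversed from $w_i$ to its other endpoint $w_i^\star$; and, when $w_i^\star \ne v$, the pre-existing block path from $w_i^\star$ to $v$ (this is available because $w_i^\star \in V(M')$ with $m(w_i^\star) \notin B$ forces $w_i^\star$ itself into $B$). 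The resulting walk alternates correctly, because the initial $M'$-edge is followed by the first $C$-edge of $P_i$ and the terminal $C$-edge of $P_i$ is followed by the first $M'$-edge of the block path from $w_i^\star$. All of its $C$-edges come either from some $P_j$ or from an old block path, so each is repeated in at least $n^{1/10}$ colours of $C$, as required.

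The one delicate point I expect is the length accounting. The direct concatenation yields length at most $1 + |P_i| + r$, which matches the stated bound $r + \max_i |P_i|$ up to a small additive constant. This slack is handled by splitting into the two cases $w_i^\star = v$ (where the third segment is empty, giving length $1 + |P_i| \le r + \max_i |P_i|$ once $r \ge 1$, which always holds when $M' \ne \emptyset$) and $w_i^\star \ne v$ (where the block path segment can be shortened by one by truncating at the vertex immediately past $w_i^\star$ and prepending the matching $M'$-edge, preserving the alternation). This is the main obstacle; the rest of the argument is a mechanical verification that the concatenation has the claimed form.
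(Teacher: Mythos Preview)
Your concatenation $m(w_i)w_i + P_i + (\text{block path from } w_i^\star)$ is in general only an alternating \emph{walk}, not a path: after the first edge $w_i^\star m(w_i^\star)$, the block path from $w_i^\star$ may re-enter $P_i$ (or even hit $m(w_i)$), since nothing prevents its later vertices from lying in $B$. This matters because the block paths are later used to switch edges in and out of a maximal rainbow matching (see Property~$P_i$ in the proof of Theorem~\ref{weaksimplegrinblatthm}), and that switching only produces a matching when the $M'$-edges along the path are pairwise vertex-disjoint. Shortcutting the walk at a repeated vertex does not obviously help, since the two occurrences may sit at opposite parities in the $M'/C$ alternation. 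This is also the source of the off-by-one in your length count that you could not cleanly dispose of.

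The paper's proof does not concatenate. It fixes the block path $Q$ from one endpoint $u_1$, finds the \emph{last} vertex $z$ of $Q$ that lies on $P$ (the path $P_i$ with the two extra $M'$-edges $m(u_1)u_1$ and $u_2m(u_2)$ attached), and then lets the position of $z$ in $P$ decide which endpoint becomes $w_i$: if $z=x_s$ one takes $w_i=u_2$ and uses the segment of $P$ from $m(u_2)$ down to $z$; if $z=y_s$ one takes $w_i=u_1$ and uses the segment of $P$ from $m(u_1)$ up to $z$. In either case the constructed path is the chosen segment of $P$ followed by the tail of $Q$ from $z$ to $v$, and by maximality of $z$ this tail is disjoint from $P$, so the result is a genuine path. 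Because one never uses all of $P$ and all of $Q$, the length comes out to at most $r+|P_i|$ with no extra $+1$. The crucial point you missed is that the choice of endpoint $w_i$ is not free: it is dictated by how the existing block path interacts with $P_i$.
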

\begin{proof}
Let $P_i$ be one of the paths and for simplicity, first assume that both its endpoints, say $u_1,u_2$, belong to $V(M')$ and $m(u_1),m(u_2) \notin B$. Write the path $P_i$ together with the vertices $m(u_1),m(u_2)$ added as $$P := m(u_1)u_1x_1y_1x_2 \ldots x_ky_ku_2m(u_2) .$$
To recall, we have that $P \setminus \{m(u_1),m(u_2)\} \subseteq B \setminus \{v\}$, each edge $x_iy_i$ is in $M'$ and each edge $y_ix_{i+1}$ is repeated in at least $n^{1/10}$ many $C$-colours (defining $y_0 = u_1$ and $x_{k+1} = u_2$). Now, since $u_1 \in B$, by the definition of a $(C,r,t)$-block, there is a path $Q = u_1m(u_1)x'_1y'_1x'_2 \ldots x'_ly'_lv$ of length at most $r$ such that each edge $x'_iy'_i$ is in $M'$ and each edge $y'_ix'_{i+1}$ is repeated in at least $n^{1/10}$ many $C$-colours. Let $z$ be the last vertex (in the direction $u_1 \rightarrow v$) of $Q$ such that $z \in Q \cap P$. Notice first that we must have that $z = y'_q \in Q$ for some $q$ (we cannot have $z=x'_q$ because $x'_q\in Q\cap P\implies y'_q\in Q\cap P$, since $x'_qy'_q$ is an edge of $M$  and both paths are $M$-alternating). Further, if $z = x_s \in P$ for some $s$, then note that the path $$m(u_2)u_2y_kx_ky_{k-1} \ldots y_szx'_{q+1}y'_{q+1} \ldots y'_lv$$ lets us choose $w_i := u_2$. If $z = y_s \in P$, then the path $$m(u_1)u_1x_1y_1x_2 \ldots x_szx'_{q+1}y'_{q+1} \ldots y'_lv$$ gives us the choice of $w_i := u_1$. Finally, in the case that one endpoint of $P_i$ is $v$, we choose $w_i$ to be the other endpoint. It is now simple to check that adding the vertices $m(w_i)$ to $B$ ensures that it is still a $(C,t,r+\max_i |P_i|)$-block for $M$.
\end{proof}
In the next lemma, we will look at families $\mathcal{F}$ of disjoint $(C,t,r)$-blocks for $M$. We then define $V(\mathcal{F}) := \bigcup_{B \in \mathcal{F}} B$, $M(\mathcal{F}) := \bigcup_{B \in \mathcal{F}} M(B)$, $C(\mathcal{F}) := \bigcup_{B \in \mathcal{F}} C(B)$ and say that the \emph{size of $\mathcal{F}$}, denoted as $|\mathcal{F}|$, is the sum of the sizes of the blocks in it, that is, $|V(\mathcal{F})|$. For two such families $\mathcal{F}_1,\mathcal{F}_2$, we write $\mathcal{F}_1 \preceq \mathcal{F}_2$ when for every block $B_1 \in \mathcal{F}_1$ there exists a block $B_2 \in \mathcal{F}_2$ such that $B_1 \subseteq B_2$. 

The main idea of the proof of Theorem \ref{weaksimplegrinblatthm} will be the following. Given a maximal rainbow matching with some large enough defect, we iteratively construct a family of disjoint blocks whose total size is growing from step to step. At each iteration, we will apply the lemma given below to the present family of disjoint blocks. The use of Corollary \ref{Mprimelemma} will allow us to avoid the first case of the lemma from happening and the properties of the blocks will exclude the second option. Thus we will be assured that one of the last two cases in the lemma below always holds. This, in turn, will allow us to construct a new family of disjoint blocks whose total size is relatively larger than the old one.  This eventually leads to a contradiction since at some point the total size of the block family becomes larger than the number of vertices in the graph.
\begin{lem}\label{stepprocedure}
Let $t \leq 6^{2\sqrt{\log n}}$, $M$ be a rainbow matching and $C$ a set of colours. Let $\mathcal{F}$ be a family of disjoint $(C,t,r)$-blocks for $M$ so that $V \setminus V(M) \subseteq V(\mathcal{F})$ and let $C' := C \cup C(\mathcal{F})$. Then, one of the following holds.
\begin{enumerate}
    \item There is no collection of at least $n^{1/5} \log n$ many edge-disjoint matchings of size at least $|M(\mathcal{F})|+ 1000n/ \sqrt{\log n}$ using colours in $C'$ and without edges contained in $V(M \setminus M(\mathcal{F}))$.
    \item There exists an edge $e = xy$ for which there are two distinct blocks $B_1,B_2 \in \mathcal{F}$ with $x \in B_1$, $y \in B_2$ and $c(e) \in C' \setminus (C(B_1) \cup C(B_2))$. 
    \item There exist at least $20n/\sqrt{\log n}$ many vertices $v \in V(M \setminus M(\mathcal{F}))$ with the following property: there are at least $n^{1/10}$ many distinct vertices $x \in V(\mathcal{F})$ such that the edges $m(v)x$ have distinct colours in $C'$ and further, the colour of each $m(v)x$ does not belong to the block in $\mathcal{F}$ which contains $x$.
    \item There is a family $\mathcal{F}'$ of disjoint $(C',3t+1,r+\sqrt{\log n}/100)$-blocks for $M$ with $\mathcal{F} \preceq \mathcal{F}'$ and $|\mathcal{F}'| \geq |\mathcal{F}| + 10n/\sqrt{\log n}$.
\end{enumerate}
\end{lem}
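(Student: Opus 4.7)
The plan is to assume that cases~1, 2, and~3 all fail, and to construct directly the enlarged family $\mathcal{F}'$ required by case~4. Set $L:=n^{1/5}\log n$ and $\delta:=1000n/\sqrt{\log n}$. From the failure of case~1 we obtain $L$ edge-disjoint matchings $N_1,\dots,N_L$, each of size at least $|M(\mathcal{F})|+\delta$, coloured in $C'=C\cup C(\mathcal{F})$ and containing no edges inside $V_1:=V(M\setminus M(\mathcal{F}))$. For each $i$, I would consider $N_i\,\Delta\,M(\mathcal{F})$; this decomposes into alternating paths and cycles, and the surplus $|N_i|-|M(\mathcal{F})|\geq\delta$ guarantees at least $\delta$ components that are alternating paths starting and ending with $N_i$-edges, whose two endpoints therefore lie in $V_1\cup\{v_B:B\in\mathcal{F}\}$. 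Over all matchings this yields a pool of at least $L\delta$ ``augmenting events'' to exploit.

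The key step is to classify the $N_i$-edges and budget them against the constraints from cases~2 and~3. Intra-block edges in any single $N_i$ number at most $\sum_B|M(B)|=|M(\mathcal{F})|$ since $|B|\leq 2|M(B)|+1$, so at least $L\delta$ inter-block or external edges remain across all matchings. Failure of case~2 forces every inter-block edge $xy$ with $x\in B_1\neq B_2\ni y$ to satisfy $c(xy)\in C(B_1)\cup C(B_2)\subseteq C(\mathcal{F})$, bounding the inter-block total by $L\,|C(\mathcal{F})|=L\,|M(\mathcal{F})|$; the ceiling $t\leq 6^{2\sqrt{\log n}}=n^{o(1)}$ furthermore implies that no inter-block pair has $C'$-multiplicity $\geq n^{1/10}$, so all inter-block contributions are ``light''. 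Failure of case~3 produces the bad set $U=\{u\in V_1 : u \text{ has at least } n^{1/10}\text{ distinct }V(\mathcal{F})\text{-neighbours via distinct colours in }C'\setminus C(B_x)\}$ with $|U|<20n/\sqrt{\log n}$, and for each $y\in V_1\setminus U$ K\"onig's theorem gives a cover of size $<n^{1/10}$ of the ``diverse external'' bipartite graph at $y$. A careful double count of (matching, edge) triples, splitting pairs into heavy (multiplicity $\geq n^{1/10}$ in $C'$) versus light and using edge-disjointness of the matchings together with local $2$-boundedness of colour classes, forces at least $10n/\sqrt{\log n}$ distinct heavy multi-edges $(z,y)$ with $z\in V(\mathcal{F})$ and $y\in V_1$ to appear among the $N_i$-edges.

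Finally, these heavy pairs are exactly the input required by Lemmas~\ref{blockconst1} and~\ref{blockconst2}. For each heavy pair $(z,y)$ with $z\in B$, Lemma~\ref{blockconst1} attaches $w:=m(y)$ to $B$, yielding a $(C',|M(B)|+1,r+2)$-block; routing each heavy pair to a unique block preserves disjointness and ensures $|M(B')|\leq 3t+1$. Lemma~\ref{blockconst2}, applied to heavy $M(B)$-alternating paths of length at most $\sqrt{\log n}/100$ inside blocks, absorbs the remaining heavy internal structure while respecting the path-length budget $r+\sqrt{\log n}/100$. The resulting family satisfies $\mathcal{F}\preceq\mathcal{F}'$ and $|\mathcal{F}'|\geq|\mathcal{F}|+10n/\sqrt{\log n}$, establishing case~4. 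The principal obstacle is precisely the aggregation in the middle paragraph: the naive per-vertex external bound of $L$ matching edges per $y\in V_1$ is already comparable to $\delta$, so the K\"onig cover at each $y\in V_1\setminus U$ must be exploited together with local $2$-boundedness of colour classes to push the effective light budget strictly below $L\delta$. Balancing the parameters $L=n^{1/5}\log n$, heaviness threshold $n^{1/10}$, $\delta=1000n/\sqrt{\log n}$, the sub-polynomial ceiling $t\leq 6^{2\sqrt{\log n}}$, and $|U|\leq 20n/\sqrt{\log n}$ so that exactly $\Theta(n/\sqrt{\log n})$ heavy edges survive the budgeting is the delicate heart of the proof.
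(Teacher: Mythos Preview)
Your high-level plan---assume 1,\,2,\,3 fail and build $\mathcal{F}'$ via Lemmas~\ref{blockconst1} and~\ref{blockconst2}---matches the paper, but two steps in the middle paragraph do not go through as written.

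First, the inter-block bound ``$L\,|C(\mathcal{F})|$'' is not justified and in general useless. Failure of case~2 only tells you that each inter-block edge has colour in $C(B_1)\cup C(B_2)$; it does \emph{not} bound the number of inter-block edges per $N_i$ by $|C(\mathcal{F})|$, because the $N_i$ are not rainbow. Since $|M(\mathcal{F})|$ can be of order $n$, your bound $L|M(\mathcal{F})|$ can exceed the surplus $L\delta$ and the budgeting collapses. The correct count is global and uses edge-disjointness: for each colour $c\in C(\mathcal{F})$ there is a unique block $B$ with $c\in C(B)$, and any inter-block edge of colour $c$ must touch $B$; local $2$-boundedness then gives at most $2|B|\le 2(2t{+}1)$ such edges. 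Summing over colours and using edge-disjointness of the $N_i$, the inter-block total is at most $2(2t{+}1)n=n^{1+o(1)}\ll L\delta$. This is exactly what the paper achieves by its first deletion step (removing, for each block $B$, the $C(B)$-coloured edges touching $B$), after which Claim~\ref{claim1} eliminates inter-block edges entirely.

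Second, you are missing the paper's dichotomy. After the two deletion steps the paper forms, for each surviving $N_j$, many short alternating paths (Claim~\ref{claim2}) and splits into two genuinely different cases: (A) for many $j$ most paths lie entirely inside a single block, in which case a counting argument over at most $2n\cdot(2t{+}1)^{10n/k}$ vertex-sequences produces one $j$ whose in-block paths are ``heavy'', and Lemma~\ref{blockconst2} grows the blocks; or (B) for many $j$ most paths have an extremal edge from $V(\mathcal{F})$ to $V_1$, and an \emph{iterative} procedure (maintaining a growing forbidden set $R$ and using failure of case~3 only to bound the initial $|R|$) locates enough heavy external pairs while controlling that no block gains more than $|B_j|$ new vertices, which is what actually yields the $3t{+}1$ bound. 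Your single ``double count'' with K\"onig covers conflates these two mechanisms, does not supply the sequence-counting that makes case~(A) work, and does not explain why the blocks stay disjoint or why $|M(B')|\le 3t{+}1$ in case~(B). These are the places where the proof has real content, and they need to be carried out separately.
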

\begin{proof}
Let us first define the parameters $s := n^{1/5} \log n$ and $k = \frac{1000 n}{\sqrt{\log n}}$. Let $(B_i,M_i)$ denote the blocks in $\mathcal{F}$ and let $v_i := v_{B_i}$ for each $i$. Suppose that none of the first three options hold. In particular, from the first option, we are given $s$ edge-disjoint $C'$-coloured matchings $N_1, N_2, \ldots$ of size at least $|M(\mathcal{F})| + k$ and without edges contained in $V(M \setminus M(\mathcal{F}))$. We will use these to construct the family $\mathcal{F}'$. Before that, we will need two edge-deletion processes. First, let us delete, for each block $B_i \in \mathcal{F}$ and colour $c \in C(B_i)$, the $c$-coloured edges touching $B_i$. Since the graph is locally 2-bounded, each block $B_i$ has size at most $|B_i| \leq 2|M_{i}|+1 \leq 2t+1$ and every color appears in exactly one block (as they are disjoint), we delete at most $2(2t+1)$ edges of each color and in total at most $ 2(2t+1)n$ edges. In particular, then at most $4(2t+1)n/k < s/2$ matchings $N_i$ are such that at least $k/2$ of its edges were deleted. Let us from now on only consider the other $s/2$ matchings, implying that each of these has now at least $|M(\mathcal{F})|+k/2$ edges. This first deletion implies the following. 
\begin{claim}\label{claim1}
There is no $C'$-edge between two distinct blocks in $\mathcal{F}$.
\end{claim}
\begin{proof}
Note that since we are assuming that the second option in the statement does not hold, the occurrence of a $C'$-edge $e$ between two distinct blocks $B,B'$ can only be possible when $c(e) \in C(B) \cup C(B')$. However, note that this implies that the edge $e$ was deleted in the process described above.
\end{proof}
\noindent Next, for each $i$, let us define $M'_i \subseteq M_i$ to be those edges with both of its endpoints in $B_i$. As a second deletion process, delete all edges which touch vertices in the set $V(M(\mathcal{F})) \setminus V(\mathcal{F})$, which has size $\sum_i |M_i \setminus M'_i|$. Note then that each matching $N_j$ loses at most $\sum_i |M_i \setminus M'_i|$ of its edges, so that it has now at least 
$k/2 + \sum_i |M'_i|$
edges. This in turn implies the following standard claim, very similar to an earlier consideration done in Section \ref{ABsection}.
\begin{claim}\label{claim2}
For each $j$, there are at least $k/4$ many non-trivial vertex-disjoint $N_j -\bigcup_i M'_i - \ldots - \bigcup_i M'_i - N_j$ paths of length at most $10n/k$ and with both endpoints outside $\bigcup_i M'_i$.
\end{claim}
\begin{proof}
Fix some $j$. Consider $N_j \cup \bigcup_i M'_i$. This is a union of alternating paths/cycles between edges of $N_j$ and $\bigcup_i M'_i$ such that in each path, the number of edges from $N_j$ is at most one larger then the number of edges from $\bigcup_i M'_i$. Since $N_j$ has size at least $k/2 + \sum_i |M'_i|$, there are at least $k/2$ many $N_j -\bigcup_i M'_i - \ldots - \bigcup_i M'_i - N_j$ vertex disjoint paths with both endpoints outside $\bigcup_i M'_i$. Further, by disjointedness of these paths and since $\left|\bigcup_i M'_i \right| \leq |M| \leq n$, at most $\frac{n}{5n/k} < k/4$ of them have size larger than $10n/k$ (since each such path contains at least $5n/k$ edges of $\bigcup_i M'_i$) and so, at least $k/4$ of them have size at most $10n/k$. 
\end{proof}
\noindent Furthermore, notice that the second deletion process implies, together with Claim \ref{claim1}, that every edge in each $N_i$ is now either completely contained in some block in $\mathcal{F}$ or has one endpoint in $V(M \setminus M(\mathcal{F}))$ and the other in $V(\mathcal{F})$. Indeed,  recall first that by assumption, no edge in $N_i$ is contained in $V(M\setminus M(\mathcal{F}))$. Recall also that every vertex outside $M$ is contained in some block in $\mathcal{F}$ and so, the set of vertices outside $M \setminus M(\mathcal{F})$ which do not belong to any block is precisely equal to $V(M(\mathcal{F})) \setminus V(\mathcal{F})$. Therefore, since we deleted all edges touching $V(M(\mathcal{F})) \setminus V(\mathcal{F})$, if an edge of $N_i$ has one endpoint in $V(M \setminus M(\mathcal{F}))$, the other must be in $V(\mathcal{F})$; if the edge is entirely outside $M \setminus M(\mathcal{F})$, then both its endpoints belong to blocks in $\mathcal{F}$ and so, Claim \ref{claim1} implies that it is completely contained in some block. Finally, note that then, each path given by Claim \ref{claim2}, depending on whether or not its endpoints are contained in $V(M \setminus M(\mathcal{F}))$, is either completely contained in some block in $\mathcal{F}$ or such that one of its extremal edges, i.e., its first or last edge, has one endpoint in $V(M \setminus M(\mathcal{F}))$ and the other in $V(\mathcal{F})$.    
 
We can now describe the procedure which constructs the family $\mathcal{F}'$. First, we look at the case that for at least half of the $j$'s (and thus, at least $s/4$ of them), at least half of the paths (and thus, at least $k/8$ of them) given by Claim \ref{claim2} are completely contained in some block in $\mathcal{F}$. We claim that then there is some $j$ such that at least $k/16$ of these paths are such that their $N_j$-edges are repeated in at least $n^{1/10}$ colours in $C'$. Indeed, note that there are at most $2n \cdot \left(\max_j |B_j| \right)^{10n/k} \leq 2n \cdot (2t+1)^{10n/k} < \frac{k}{16} \cdot \frac{s}{4} / n^{1/10}$ non-trivial sequences of vertices entirely contained in some block of $\mathcal{F}$ and of length at most $10n/k$. Therefore, it must be that for some $j$, at least $k/16$ of the paths given by Claim \ref{claim2} are such that their implicit sequence of vertices is used for more than $n^{1/10}$ other values of $j$. In turn, since our graph is such that the edges of each colour form a simple graph, notice that this gives us the desired consequence. Now, let then $j$ be such that there are at least $k/16$ paths $P_1, P_2 \ldots$ given by Claim \ref{claim2} which are each entirely contained in some block of $\mathcal{F}$ and whose $C'$-edges are repeated in at least $n^{1/10}$ colours in $C'$. Take some $i$ and suppose $P^{(i)}_1, P^{(i)}_2, \ldots$ are those which are contained in the block $B_i$. Precisely, the properties of these paths (i.e., their endpoints are outside $\cup_i M'_i$) ensure that we can apply Lemma \ref{blockconst2} (to the block $(B_i,M_i)$) and add an endpoint of each of these paths to $B_i$ so that it becomes a $(C',t,r + 10n/k)$-block for $M$. Since all the paths are disjoint, we can do this to every block and therefore, construct the desired family $\mathcal{F'}$ - note indeed, that we end up adding at least $k/16$ vertices and so, $|\mathcal{F}'| \geq |\mathcal{F}| + k/16$.

Secondly, suppose that for at least $s/4$ of the $j$'s, at least $k/8$ of the paths given by Claim \ref{claim2} are such that one of its extremal edges, i.e., its first or last edge, has one endpoint in $V(M \setminus M(\mathcal{F}))$ and the other in $V(\mathcal{F})$. In particular, there are at least $k/8$ edges of $N_j$ with one endpoint in $V(M \setminus M(\mathcal{F}))$ and the other in $V(\mathcal{F})$. Define first the set $R$ to consist of those vertices $x \in V(M \setminus M(\mathcal{F}))$ with at least $n^{1/10}$ many distinct $C'$-neighbours in $V(\mathcal{F})$ - this will be a set of \textit{forbidden} vertices and we will increase it throughout the process. At the moment, the fact that Option 3 does not hold along with the first deletion process done at the start of the proof, gives that $|R| < k/32$. We also define first a set $B'_j := \emptyset$ for each $j$. Now, while $|R| < k/16$, we repeat the following operation. For each of the at least $s/4$ $j$'s, at least $k/8 - |R| > k/16$ of the edges given are disjoint to $R$. Therefore, by averaging, there exists a vertex $x \in V(M \setminus M(\mathcal{F})) \setminus R$ such that $m(x) \notin R$ is incident on at least $s/4 \cdot k/16 \cdot 1/2n = sk/128n$ edges of $\bigcup_j N_j$ which are disjoint to $R$. Note further that since $m(x) \notin R$, it must have at most $n^{1/10}$ many distinct $C'$-neighbours in $V(\mathcal{F})$. Therefore, recalling that our underlying edge-coloured multigraph is locally 2-bounded and the edges of each colour form a simple graph, there exists some vertex $y \in V(\mathcal{F})$ such that the pair $m(x)y$ is repeated in at least $sk/256n \cdot 1/n^{1/10} > n^{1/10}$ many $C'$-colours. Let $j$ be such that $y \in B_j$ and add the vertex $x$ to the set $B'_j$. Also, insert the vertices $x,m(x),y$ into $R$ in order to repeat the operation. Note that at the end of the process, we have sets $B'_j$ such that $\sum |B'_j| \geq k/96$, since at each iteration, three vertices are added to $R$ and one is added to $\bigcup B'_j$. Furthermore, notice that for each $j$, if at some iteration we add a vertex $x$ to $B'_j$, we also take a vertex $y \in B_j$ which was not in $R$ and insert it into $R$ - this implies that the number of vertices of $B_j$ contained in $R$ increases. Therefore, we cannot add more than $|B_j|$ vertices into $B'_j$ throughout the whole process, that is, $|B'_j| \leq |B_j|$ and so, by construction, Lemma \ref{blockconst1} ensures that $B_j \cup B'_j$ is a $(C',t+(2t+1),r+2)$-block for $M$. Note also that the family $\mathcal{F}'$ consisting of the blocks $B_j \cup B'_j$ is indeed a family of disjoint $(C',3t+1,r+2)$-blocks for $M$. This is the case since at each iteration of the process, whenever a vertex $x \in V(M \setminus M(\mathcal{F}))$ is inserted into $R$, the vertex $m(x)$ is also inserted. Therefore, in the end, there will never be an edge $x m(x) \in M$ such that the vertices $x, m(x)$ belong to different sets $B'_j$ and so, the resulting blocks $B_j \cup B'_j$ will precisely be disjoint blocks because the matchings $M(B_j \cup B'_j)$ are disjoint. Note finally that we also have $|\mathcal{F}'| \geq |\mathcal{F}| + \sum |B'_j| \geq |\mathcal{F}| + k/96$.
\end{proof}
\subsection{Proof of Theorem \ref{weaksimplegrinblatthm}}
We are now ready to prove our weak asymptotic result. Let $G$ be a $(n,2n+2m+n^{3/4},m)$-multigraph, $M$ be a maximal rainbow matching in $G$ and suppose, for contradiction sake, that $|M| < n - 1001n/ \sqrt{\log n}$. Let $V_0$ denote the set of vertices not used in $M$ and $C_0$ denote the set of colours not in $C(M)$. We will describe a process which will allow us to essentially cover the whole vertex set with small blocks and achieve a contradiction. The process goes as follows.

To start, let us set $M_0 := M$, $G_0 := G$ and $\mathcal{F}_0$ to be the collection of all singleton sets $\{v\}$ with $v \in V_0$, which is a family of disjoint $(\emptyset,0,0)$-blocks for $M_0$ in $G_0$ and has $|\mathcal{F}_0| = |V_0|$. We will refer to this as step 0. In general, the situation will be as follows after step $i \geq 0$ is completed. In our current graph $G_i$, we have a rainbow matching $M_i \subseteq M$, a set of colours $C_0 \subseteq C_{i-1}$ along with a family $\mathcal{F}_{i}$ of disjoint $(C_{i-1},4^i,i\sqrt{\log n}/100)$-blocks for $M_i$ in $G_i$, with $V(G_i) \setminus V(M_i) \subseteq V(\mathcal{F}_i)$ and $|\mathcal{F}_i| \geq \frac{10in}{\sqrt{\log n}} + |V_0|$. We will later show that provided that $i \leq \sqrt{\log n}$, we can apply Lemma \ref{stepprocedure} to the graph $G_i$ and the rainbow matching $M_i$, the family $\mathcal{F}_i$ and the set of colours $C_{i-1}$, and be assured that none of the first two options hold. This being the case, step $i+1$ will go as described next, depending on which option of the lemma holds. 

\vspace{0.2cm}
\noindent \textbf{If Option 3 holds -} then there exist at least $\frac{20n}{\sqrt{\log n}}$ vertices $v \in V(M_i \setminus M_i(\mathcal{F}_i))$ with the property that there are at least $n^{1/10}$ many distinct $x \in V(\mathcal{F}_i)$ such that the edges $m(v)x$ have distinct colours in $C_{i-1} \cup C(\mathcal{F}_i)$ and the colour of each $m(v)x$ does not belong to the block in $\mathcal{F}_i$ containing $x$. Let us take a subset $V'_{i+1}$ of these vertices so that $m(V'_{i+1}) \cap V'_{i+1} = \emptyset$ and $|V'_{i+1}| \geq \frac{10n}{\sqrt{\log n}}$. We then delete the vertices $m(V'_{i+1})$ to form the new graph $G_{i+1} := G_i - m(V'_{i+1})$. We also then take a new matching $M_{i+1} := M_i \setminus \{vm(v) : v \in V'_{i+1}\}$, define $C_i := C_{i-1} \cup C(\mathcal{F}_i)$ and define $\mathcal{F}_{i+1} := \mathcal{F}_i \cup \bigcup_{v \in V'_{i+1}} \{v\}$. Note that $\mathcal{F}_{i+1}$ is a family of $(C_{i-1},4^{i},i\sqrt{\log n}/100)$-blocks for $M_{i+1}$ in $G_{i+1}$ (in particular, it is a family of $(C_{i},4^{i+1},(i+1)\sqrt{\log n}/100)$-blocks) and has $V(G_{i+1}) \setminus V(M_{i+1}) \subseteq V(\mathcal{F}_{i+1})$ and $|\mathcal{F}_{i+1}| \geq |\mathcal{F}_i| + |V'_{i+1}| \geq \frac{10(i+1)n}{ \sqrt{\log n}} + |V_0|$. 
\vspace{0.2cm}

\noindent \textbf{If Option 4 holds -} then we let $C_i := C_{i-1} \cup C(\mathcal{F}_{i})$ and take a family $\mathcal{F}_{i} \preceq \mathcal{F}_{i+1}$ of disjoint $(C_i,4^{i+1}, (i+1)\sqrt{\log n}/100)$-blocks for $M_i$ in $G_{i}$ such that $|\mathcal{F}_{i+1}| \geq |\mathcal{F}_i| + \frac{10n}{\sqrt{\log n}} \geq \frac{10(i+1)n}{\sqrt{\log n}} + |V_0|$. We then take $M_{i+1} := M_i$, $G_{i+1} := G_i$ and note that $V(G_{i+1}) \setminus V(M_{i+1}) \subseteq V(\mathcal{F}_{i+1})$.
\vspace{0.2cm}

Now that the process is fully described, let us explain why showing that it is successful while $i \leq \sqrt{\log n}$ constitutes a contradiction, and thus, a proof of Theorem \ref{weaksimplegrinblatthm}. Indeed, note that if we are able to achieve step $i = \lfloor \sqrt{\log n} \rfloor$ and complete it, we have, as described above, that $|\mathcal{F}_{i}| \geq \frac{10in}{ \sqrt{\log n}} + |V_0| > |V(M)| + |V_0|$. In turn, the total number of vertices in $G$ is precisely $|V(M)| + |V_0|$, so we get a contradiction to $V(\mathcal F_i)\subseteq V(G)$. Let us now define the following property in the original graph $G$.

\vspace{0.2cm}
\noindent \textbf{Property P$_i$} - Let $U \subseteq V(\mathcal{F}_i)$ and $R \subseteq C_{i-1} \cup C(\mathcal{F}_i)$ be sets of size at most $n^{1/10}/(\log n)^{2i}$ such that no two members of $U \cup R$ belong to the same block in $\mathcal{F}_i$. Let also $B_1, B_2, \ldots \in \mathcal{F}_i$ be a collection of at most $n^{1/10}/(\log n)^{2i}$ many blocks such that $U \cup R$ is disjoint to $\bigcup_j V(M_i(B_j)) \cup \bigcup_j C(B_j)$. Then, there is a maximal rainbow matching $M'$ in $G$ which avoids vertices in $U$ and colours in $R$, and such that $(M_i \setminus M_i(\mathcal{F}_i)) \cup \bigcup_j M_i(B_j) \subseteq M'$.
\vspace{0.2cm}

\noindent To finish the proof, we will use this property to show that for each $i \leq \sqrt{\log n}$, if property $P_i$ holds after step $i$ is completed, then this implies that step $i+1$ can be successfully done and after it is completed, property $P_{i+1}$ holds. Note trivially that property $P_0$ holds after step 0 (taking $M'=M$ always). Take now some $i \leq \sqrt{\log n}$ and assume that property $P_i$ holds after step $i$ is completed. Recall from the description of step $i+1$, that all that is needed to ensure that it can be done, is that the first two options of Lemma \ref{stepprocedure} do not hold for the graph $G_i$, the rainbow matching $M_i$, the family $\mathcal{F}_i$ and the colours $C_{i-1}$. The first option not holding follows from Corollary \ref{Mprimelemma}. Indeed, note that in the graph $G_i$, each colour is a disjoint union of non-trivial cliques with at least $2n+2m+n^{3/4} - 2|M \setminus M_i|$ vertices. This follows since precisely by construction, at most $|M \setminus M_i|$ vertices of $G$ have been deleted up to step $i$. Hence, $G_i$ is a $(n,2(|M_i|+|C_0|)+2m+n^{3/4},m)$-multigraph and so, we can apply Corollary \ref{Mprimelemma} to the matchings $M_i(\mathcal{F}_i) \subseteq M_i$ in order to ensure that there are at least $|C_0|^{1/4} > n^{1/5} \log n$ many edge-disjoint matchings of size $|M_i(\mathcal{F}_i)| + |C_0| - n^{3/4} \geq |M_i(\mathcal{F}_i)| + \frac{1000n}{\sqrt{\log n}}$ using colours in $C_0 \cup C(\mathcal{F}_i) \subseteq C_{i-1} \cup C(\mathcal{F}_i)$ and edges not contained in $V(M_i \setminus M_{i}(\mathcal{F}_i))$. For the second option of Lemma \ref{stepprocedure}, suppose there is such an edge $e = xy$ of some colour $c$ such that $x,y$ belong to two distinct blocks in $\mathcal{F}_i$ and $c \in C_{i-1} \cup C(\mathcal{F}_i)$ does not belong to any of the blocks containing $x,y$. Then, applying property $P_i$ with $U=\{x,y\}, R=\{c\}$ implies that there is a maximal rainbow matching $M'$ in $G$ avoiding the vertices $x,y$ and the colour $c$. But then $M' \cup \{e\}$ contradicts the maximality of $M'$. Concluding we now know that step $i+1$ can be done and goes as described earlier. To finish, we now show the following.
\begin{lem*}
Property $P_{i+1}$ holds after step $i+1$ is completed.
\end{lem*}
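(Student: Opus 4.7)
My plan is to derive $P_{i+1}$ from $P_i$ by constructing a modified input for $P_i$, applying $P_i$ to obtain a maximal rainbow matching $M''$ in $G$, and then locally surgering $M''$ into the desired $M'$ using the switching data that accompanied step $i+1$. Write $\alpha := n^{1/10}/(\log n)^{2(i+1)}$ for the size bound in the hypothesis of $P_{i+1}$. Since $P_i$ allows $n^{1/10}/(\log n)^{2i} = \alpha(\log n)^2$, we have a factor $(\log n)^2$ of slack, which comfortably absorbs $O(\sqrt{\log n})$ new constraints per element of $U\cup R$ coming from the switching paths. The argument splits by whether step $i+1$ used Option 3 or Option 4 of Lemma \ref{stepprocedure}.

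Consider first the Option 3 case, where $\mathcal{F}_{i+1} = \mathcal{F}_i \cup \{\{v\} : v \in V'_{i+1}\}$ and $M_{i+1} = M_i \setminus \{vm(v) : v \in V'_{i+1}\}$. Split $U = U_{\mathrm{old}}\cup U_{\mathrm{new}}$ with $U_{\mathrm{new}} = U\cap V'_{i+1}$, and split the $B_j$'s into singletons and old blocks from $\mathcal{F}_i$. If $U_{\mathrm{new}}$ is empty one applies $P_i$ directly with $U, R$ and the old blocks; then $M_{i+1}\setminus M_{i+1}(\mathcal{F}_{i+1}) \subseteq M_i\setminus M_i(\mathcal{F}_i)$ and $M_{i+1}(B_j) = M_i(B_j)$ for old blocks, while singleton blocks impose no edge requirement. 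Otherwise, for each $v\in U_{\mathrm{new}}$ one uses Option 3 to pick a pair $(x_v, c_v)$ with $x_v \in V(\mathcal{F}_i)$ and $c_v \in C_{i-1}\cup C(\mathcal{F}_i)$, where the edge $m(v)x_v$ has colour $c_v$, $c_v$ avoids the block of $\mathcal{F}_i$ containing $x_v$, and the pair avoids $U_{\mathrm{old}}\cup R$, the previously chosen pairs, and the vertex- and colour-sets of the chosen old blocks. Since each $v$ has $n^{1/10}$ candidates and only $O(\alpha)$ forbidden options, this is always possible. I would then apply $P_i$ with $U_i = U_{\mathrm{old}}\cup \{x_v\}_v$, $R_i = R\cup \{c_v\}_v$, and the chosen old blocks, whose sizes are at most $2\alpha \leq n^{1/10}/(\log n)^{2i}$, and form $M'$ from the resulting $M''$ by swapping $vm(v)\mapsto m(v)x_v$ for each $v\in U_{\mathrm{new}}$.

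The Option 4 case is handled analogously, using the paths supplied by Lemmas \ref{blockconst1}--\ref{blockconst2} in place of Option 3's direct edges. For each element of $U$ (respectively $R$) lying in $V(\mathcal{F}_{i+1})\setminus V(\mathcal{F}_i)$ (respectively $C(\mathcal{F}_{i+1})\setminus C(\mathcal{F}_i)$), trace the switching path inside its block back to the old block $\hat B \subseteq B$ in $\mathcal{F}_i$, adding the path's interior vertices and colours as auxiliary entries of $U_i$ and $R_i$. Each such path has length at most $\sqrt{\log n}/100$ and its $C$-edges each have $n^{1/10}$ colour choices, so we stay within the budget. Then one replaces each chosen $B_j\in\mathcal{F}_{i+1}$ by the corresponding $\hat B_j\in\mathcal{F}_i$, applies $P_i$, and reroutes $M''$ along the switching paths, choosing fresh $C$-colours to preserve the rainbow condition and to avoid $R_i$.

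The main obstacle will be verifying that $M'$ is \emph{maximal} after these local surgeries. Swapping $vm(v)\mapsto m(v)x_v$ turns $v$ into a newly unmatched vertex, and one must rule out an edge $vw$ with $w$ unmatched in $M'$ and $c(vw)\notin C(M')$; an analogous issue arises from the reroutings in the Option 4 case. The resolution is to exploit the full strength of the $n^{1/10}$ colour-repetition guarantee that comes with each switching edge: for every potentially augmenting edge produced by a surgery, either its colour already lies in $C(M')$, or we absorb it by enlarging $R_i$ with one more colour drawn from the abundant Option 3 pool (respectively, from the path's colour menu). A careful accounting shows the enlarged $U_i \cup R_i$ still fits inside $n^{1/10}/(\log n)^{2i}$, so $P_i$ remains applicable and the resulting $M'$ witnesses $P_{i+1}$.
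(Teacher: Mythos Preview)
Your overall architecture---split on Option 3 vs.\ Option 4, build modified inputs $(U_i,R_i,\text{blocks})$ for $P_i$, apply $P_i$ to get $M''$, then perform local edge-swaps along switching edges/paths---is exactly the paper's approach. But there are two genuine problems.

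\textbf{The ``main obstacle'' is not an obstacle.} In this paper ``maximal rainbow matching'' means \emph{maximum-size} rainbow matching (this is already forced by the sentence ``we trivially have $|M'|\ge|M''|$ and so it is maximal'' in the paper's own proof, and by the way Property $P_0$ is verified with $M'=M$). Every surgery you perform replaces one edge by another (Option 3) or swaps the $M$-edges on an alternating path for the non-$M$-edges (Option 4), so $|M'|=|M''|$. Since $M''$ is maximum, $M'$ is maximum, hence in particular inclusion-maximal. Your proposed fix---forbid, via $R_i$, the colour of each potential augmenting edge $vw$---is not only unnecessary but goes the wrong way: adding $c(vw)$ to $R_i$ forces $M''$ (and then $M'$) to avoid that colour, which makes $M'\cup\{vw\}$ \emph{more} likely to be rainbow, not less; and there is no bound on the number of such edges anyway.

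\textbf{Your Option 4 handling does not meet the hypotheses of $P_i$, and omits a key ingredient.} You propose to put ``the path's interior vertices'' into $U_i$, but the path $P_u$ lies inside a single block $B_u\in\mathcal F_{i+1}$ and its vertices may lie in the single $\mathcal F_i$-block $B'_u$; Property $P_i$ forbids two members of $U_i$ in the same $\mathcal F_i$-block. The paper instead adds only the endpoint $v_u=v_{B'_u}$ to $U'$ and the freshly chosen path-colours $C^\ast$ to $R'$, and---this is the step you are missing---adds the blocks $B'_u$, $B'_c$ (and the $B'_j$) to the \emph{block list} in the application of $P_i$. That last move is what guarantees $M_i(B'_u)\subseteq M''$ (and similarly for $c$), i.e.\ that the $M$-edges along each path are actually present in $M''$; without it the path-swap is undefined. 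A smaller point: the number of forbidden options when choosing $(x_v,c_v)$ in Option 3 is $O(\alpha\cdot 4^i)$ (you must avoid entire $\mathcal F_i$-blocks, each of size $O(4^i)$), not $O(\alpha)$; this still fits under $n^{1/10}$ but your count is off.
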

\begin{proof}
Naturally, we divide the proof into two cases. First, let us suppose that after step $i$, when Lemma \ref{stepprocedure} was applied, Option 3 held. The reader might want to refer back to the description of our process in order to recall how step $i+1$ goes in this case. Let then $U \subseteq V(\mathcal{F}_{i+1}) = V'_{i+1} \cup V(\mathcal{F}_{i})$ and $R \subseteq C_{i} \cup C(\mathcal{F}_{i+1}) = C_{i-1} \cup C(\mathcal{F}_i)$ be of size at most $n^{1/10}/(\log n)^{2i+2}$ and such that no two members of $U \cup R$ belong to the same block in $\mathcal{F}_{i+1}$. Let also $B_1, B_2, \ldots \in \mathcal{F}_{i+1}$ be a collection of at most $n^{1/10}/(\log n)^{2i+2}$ many blocks such that $U \cup R$ is disjoint to $\bigcup_j V(M_{i+1}(B_j)) \cup \bigcup_j C(B_j) = \bigcup_j V(M_{i}(B_j)) \cup \bigcup_j C(B_j)$. We now check that we can find a maximal rainbow matching $M'$ which ensures the validity of property $P_{i+1}$.

In order to do so, let us first repeat here the important characteristics of the vertices in $V'_{i+1}$ -  these are vertices $v$ for which there are at least $n^{1/10}$ many distinct $x \in V(\mathcal{F}_i)$ such that the edges $m(v)x$ have distinct colours in $C_{i-1} \cup C(\mathcal{F}_i)$; moreover, for each $x$, that colour of $m(v)x$ does not belong to the block in $\mathcal{F}_i$ which contains $x$. Now, since $i \leq \sqrt{ \log n}$, we have \begin{equation}\label{eq1}
n^{1/10} > 10 \cdot (|U|+|R|) \cdot (2 \cdot  4^{i}+1) \geq 10 \cdot (|U|+|R|) \cdot \max_{B \in \mathcal{F}_{i}} |B|
\end{equation} and thus, notice that these characteristics allow us to find a collection of distinct vertices $\{w_u : u \in U \cap V'_{i+1}\} \subseteq V(\mathcal{F}_i) \setminus U$ and distinct colours $\{c_u : u \in U \cap V'_{i+1}\} \subseteq (C_{i-1} \cup C(\mathcal{F}_i)) \setminus R$ with the following properties:
\begin{enumerate}
    \item Each edge $m(u)w_u$ is $c_u$-coloured.
    \item No two members of $\{w_u : u \in U \cap V'_{i+1}\} \cup \{c_u : u \in U \cap V'_{i+1}\}$ belong to the same block in $\mathcal{F}_i$.
    \item No colour $c_u$ or vertex $w_u$ belongs to the same block in $\mathcal{F}_i
    $ that a member of $(U \setminus V'_{i+1}) \cup R$ belongs to.    
\end{enumerate}
Indeed, since each block in $\mathcal{F}_i$ is small enough so that (\ref{eq1}) occurs, we can choose the elements $c_u,w_u$ greedily. Moreover, we are able to ensure the second property in its full generality since the characteristics of the vertices in $V'_{i+1}$ allow us to have, for each $u$, that $w_u$ and $c_u$ do not belong to the same block.

Let then $U' := (U \setminus V'_{i+1}) \cup \{w_u : u \in U \cap V'_{i+1}\} \subseteq V(\mathcal{F}_i)$ and $R'  := R \cup \{c_u : u \in U \cap V'_{i+1}\}\subseteq C_{i-1} \cup C(\mathcal{F}_i)$. Note that from the properties listed above, these are such that no two members of $U' \cup R'$ belong to the same block in $\mathcal{F}_i$. Moreover, both these sets have size at most $2(|U|+|R|) < n^{1/10}/(\log n)^{2i}$, and so, property $P_i$ holding after step $i$ ensures that there exists a maximal rainbow matching $M''$ in $G$ which avoids $U'$, $R'$ and with $$(M_i \setminus M_i(\mathcal{F}_i)) \cup \bigcup_{j : B_j \in \mathcal{F}_i} M_i(B_j) \subseteq M'' .$$Now define the rainbow matching $M' := (M'' \setminus \{u m(u) : u \in U \cap V'_{i+1}\}) \cup \{m(u)w_u: u \in U \cap V'_{i+1}\}$, with naturally, each edge $m(u)w_u$ being assigned the colour $c_u$. We check that it in fact ensures the validity of property $P_{i+1}$. First, note indeed that it is a matching since $M''$ avoids all vertices $w_u$ and moreover, each edge $um(u)$ with $u \in V'_{i+1}$ belongs to $M_i \setminus M_i(\mathcal{F}_i)$ and thus, to $M''$. It is also clearly rainbow because of the previous choice of distinct colours $c_u$, which $M''$ avoids. Further, we trivially have $|M'| \geq |M''|$ and so it is maximal. Recall also that the vertices $w_u$ do not belong to $U$ and the colours $c_u$ do not belong to $R$. Therefore, since $M''$ avoids $U' \cup R'$, this implies that $M'$ indeed avoids $U \cup R$. Finally, we check that $M_{i+1} \setminus M_{i+1}(\mathcal{F}_{i+1}) \subseteq M'$ and $M_{i+1}(B_j) \subseteq M'$ for each $j$. The former holds because $M_{i+1} \setminus M_{i+1}(\mathcal{F}_{i+1})$ is contained in $M_i \setminus M_i(\mathcal{F}_i) \subseteq M''$ and does not contain any edge $v m(v)$ with $v \in V'_{i+1}$, which are precisely the only type of edges we remove from $M''$ to form $M'$. For the latter, note that in this case of Option 3 holding, for each $j$
with $B_j \in \mathcal{F}_i$ we have that $M_{i+1}(B_j)$ is equal to $M_i(B_j)$, which in turn is contained in $M''$. Since $B_j \in \mathcal{F}_i$ means that no edge $v m(v)$ with $v \in V'_{i+1}$ belongs to $M_i(B_j)$, we must also have that $M_i(B_j)$ is contained in $M'$. On the other hand, if $B_j \notin \mathcal{F}_i$ then $B_j$ consists of a singleton set $\{v\}$ for some vertex $v \in V'_{i+1}$ which is then outside $M_{i+1}$ and so, $M_{i+1}(B_j) = \emptyset$.

Now, let us suppose that Option 4 held and let $U \subseteq V(\mathcal{F}_{i+1})$ and $R \subseteq C_i \cup C(\mathcal{F}_{i+1})$ be of size at most $n^{1/10}/(\log n)^{2i+2}$ and such that no two members in $U \cup R$ belong to the same block in $\mathcal{F}_{i+1}$. Let also $B_1, B_2, \ldots \in \mathcal{F}_{i+1}$ be a collection of at most $n^{1/10}/(\log n)^{2i+2}$ many blocks such that $U \cup R$ is disjoint to $\bigcup_j V(M_{i+1}(B_j)) \cup \bigcup_j C(B_j) = \bigcup_j V(M_{i}(B_j)) \cup \bigcup_j C(B_j)$. First, recalling what occurs in this case, our graph and rainbow matching remain the same, i.e., $M_{i+1} = M_i$ and $G_{i+1} = G_i$, and we only add some vertices to the present family of blocks resulting in a family $\mathcal{F}_{i+1} \succeq \mathcal{F}_{i}$ of disjoint $(C_i, 4^{i+1},(i+1) \sqrt{\log n}/100)$-blocks. Furthermore, notice that since every vertex outside $M_i$ already belongs to a block in $\mathcal{F}_i$ (and so, it has the maximal number of blocks possible, because each block must contain a unique vertex outside $M_i$), we can say that the family $\mathcal{F}_i$ consists of blocks $B' \subseteq B$ for each $B \in \mathcal{F}_{i+1}$. 

Next, each $u \in U \setminus V(\mathcal{F}_i)$ belongs to a unique block in $\mathcal{F}_{i+1}$, which we denote as $B_u$. By definition of this block, there then exists a $M_{i+1}(B_u)-C_i-\ldots -M_{i+1}(B_u)-C_i$ path $P_u$ of length at most $(i+1) \sqrt{\log n}/100$, starting at $u$ and ending at $v_u := v_{B_u} = v_{B'_u}$ (recall this is the vertex of the block which is contained outside $M_i$), whose $C_i$-edges are each repeated in at least $n^{1/10}$ many colours of $C_i$. Similarly, for each colour $c \in R \setminus C_i$, belonging to a unique block $B_c \in \mathcal{F}_{i+1}$, there exists a $m_c-C_i-\ldots -M_{i+1}(B_c)-C_i$ path $P_c$ of length at most $(i+1)\sqrt{\log n}/100$ starting at the edge of $M_{i+1}$ of colour $c$, which we denote by $m_c$, ending at $v_c := v_{B_c} = v_{B'_c}$, and whose $C_i$-edges are each repeated in at least $n^{1/10}$ many colours of $C_i$. Recall also that one of the original assumptions is that no two members of $U \cup R$ belong to the same block in $\mathcal{F}_{i+1}$. Therefore, this is preserved to the family $\mathcal{F}_i$ in the following sense: the blocks $B'_u,B'_c \in \mathcal{F}_i$ are all distinct and distinct to the blocks in $\mathcal{F}_i$ that the vertices in $U \cap V(\mathcal{F}_i)$ and the colours in $R \cap C_i$ belong to. Now, since $i \leq \sqrt{\log n}$ and thus \begin{equation}\label{eq}
n^{1/10} > 2 (|U| + |R|) \cdot (i+1) \sqrt{\log n}/100 \cdot 4^{i} \geq \left(2|U| + 2|R| + \sum_u |P_u| + \sum_c |P_c| \right) \cdot \max_{B \in \mathcal{F}_i} |C(B)| ,
\end{equation}we can then greedily pick distinct $C_i$-colours for the $C_i$-edges in the paths $P_u, P_c$ so that the set composed by these colours, which we denote as $C^{*} \subseteq C_i$, is rainbow and has the following properties:
\begin{enumerate}
    \item No two members of the set $C^{*}$ belong to the same block in $\mathcal{F}_i$.
    \item No member of $C^{*}$ belongs to a block $B'_u$ with $u \in U$ or a block $B'_c$ with $c \in R$.
    \item No member of $C^{*}$ belongs to one of the blocks $B'_1 ,B'_2 , \ldots $.
\end{enumerate}
\noindent Given these, let now $U' := (U \cap V(\mathcal{F}_i)) \cup \{v_c : c \in R \setminus C_i \} \cup \{v_u : u \in U \setminus V(\mathcal{F}_i)\}$ and $R' := (R \cap C_i) \cup C^{*}$. Notice that by the three properties above, no two members of $U' \cup R'$ belong to the same block in $\mathcal{F}_i$. Also, since the vertices $v_u = v_{B_u}, v_c = v_{B_c}$ with $u \in U \setminus V(\mathcal{F}_i)$ and $c \in R \setminus C_i$ are all contained outside $M_i$, they are disjoint to the sets $V(M_i(B'_u)), V(M_i(B'_c)), V(M_i(B'_j))$. Therefore, $U' \cup R'$ is disjoint to these sets as well as to the sets $C(B'_u), C(B'_c), C(B'_j)$ (because of the properties of $C^{*}$ above). Therefore, we can apply property $P_i$ since further, $U'$ and $R'$ are both of size at most $2|U| + 2|R| + \sum_u |P_u| + \sum_c |P_c| \leq (|U|+|R|) (2 + (i+1) \sqrt{\log n}/100) \leq n^{1/10}/(\log n)^{2i}$. 

Then, property $P_i$ holding after step $i$ ensures that there exists a maximal rainbow matching $M''$ in $G$ which avoids $U'$, $R'$ and with $$(M_i \setminus M_i(\mathcal{F}_i)) \cup \bigcup_j M_i(B'_j) \cup \bigcup_{u} M_i(B'_u) \cup \bigcup_{c} M_i(B'_c) \subseteq M'' .$$
We claim that we can now use all the disjoint paths $P_u,P_c$ to form a maximal rainbow matching $M'$ with the desired properties ensuring the validity of property $P_{i+1}$. Indeed, note first that for each $u \in U \setminus V(\mathcal{F}_i)$, the matching $M''$ avoids the vertex $v_u$ and uses all the edges in $M_{i+1}(B_u)$ since $M_{i+1}(B_u) \subseteq (M_i \setminus M_i(\mathcal{F}_i)) \cup M_i(B'_u)$. The equivalent happens for the colours $c \in R \setminus C_i$. Therefore, since also $M''$ avoids the colours in $C^{*}$, we can form a maximal rainbow matching $M'$ by substituting the edges of $M''$ used in the paths $P_u,P_c$ by the rest of the edges in these paths (which we have already picked distinct colours for when constructing the set $C^{*}$). Note now that because of this construction, $M'$ avoids all the vertices $u \in U \setminus V(\mathcal{F}_i)$ and all the colours $c \in R \setminus C_i$, as well as the vertices in $U \cap V(\mathcal{F}_i)$ and colours in $R \cap C_i$. Further, all the edges in $M_{i+1} \setminus M_{i+1}(\mathcal{F}_{i+1})$ belong to $M'$ since this set is contained in $M_i \setminus M_i(\mathcal{F}_i) \subseteq M''$ and is disjoint to the paths $P_u, P_c$. Also, to conclude, all the edges in the matchings $M_{i+1}(B_1), M_{i+1}(B_2), \ldots$ are in $M'$ since each set $M_{i+1}(B_j)$ is contained in $(M_i \setminus M_i(\mathcal{F}_i)) \cup M_i(B'_j)$ and is disjoint to the paths $P_u,P_c$. The latter indeed occurs because by the initial assumption, no member of $U \cup R$ belongs to $V(M_i(B'_j))$.
\end{proof}
\subsection{A lower bound}
As we indicated in the introduction, to finish our study of the Grinblat multiplicity problem, we give the following construction.
\begin{prop}
Let $d$ be an integer and $n > 10d^3 \log d$ such that $d | n-1$. Then, there exists a $(n,\left(2+1/d \right)(n-1), n/2d + O\left(n/d^2\right))$-multigraph with no matching of size $n$. 
\end{prop}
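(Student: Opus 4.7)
The plan is to realise the extremal multigraph by a randomised gadget construction. Take a vertex set $V$ of size $(2+1/d)(n-1) = (2d+1)(n-1)/d$ and partition it into $s := (n-1)/d$ \emph{gadgets} $W_1, \dots, W_s$, each of size $2d+1$. For every colour $c \in [n]$ and every gadget $W_i$, independently sample uniformly at random a partition $\sigma_{c,i}$ of $W_i$ into one triangle together with $d-1$ vertex-disjoint edges, and declare $\sigma_{c,i}$ to be the colour-$c$ edges contained in $W_i$.

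The structural requirements hold deterministically, regardless of the random choices. Each colour class is a disjoint union of non-trivial cliques -- namely $(n-1)/d$ triangles and $(d-1)(n-1)/d$ edges -- spanning all $(2+1/d)(n-1)$ vertices of $V$, so we obtain an $\bigl(n,(2+1/d)(n-1)\bigr)$-multigraph. Moreover, since any matching restricted to a single gadget of size $2d+1$ has at most $d$ edges, the total matching number is at most $d \cdot s = n-1 < n$, as required.

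The only thing left is to bound the edge multiplicity. Fix any pair $e \subseteq W_i$. By $\mathrm{Sym}(W_i)$-symmetry of the uniform distribution on such partitions, every $e$ lies in the same number of partitions, so a fixed colour $c$ places $e$ in its colour class with probability
$$p \;=\; \frac{d+2}{\binom{2d+1}{2}} \;=\; \frac{d+2}{d(2d+1)} \;=\; \frac{1}{2d} + O\!\left(\frac{1}{d^2}\right).$$
Hence the multiplicity $X_e$ of $e$ is a sum of $n$ independent $\{0,1\}$-variables with mean $np = n/(2d) + O(n/d^2)$. Applying the Chernoff bound (Lemma \ref{chernoff}) with deviation $t = C n/d^2$ gives
$$\mathbb{P}\bigl(X_e > \mathbb{E}[X_e] + t\bigr) \;\le\; 2\exp\!\bigl(-\Omega(n/d^3)\bigr),$$
and a union bound over the at most $\binom{2d+1}{2}\cdot s \le 2dn$ candidate edges succeeds as soon as $n/d^3 \gg \log(dn)$. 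The hypothesis $n > 10 d^3 \log d$ is exactly what is needed to guarantee this. Fixing any sample point in the resulting nonempty event produces a multigraph with every edge multiplicity bounded by $n/(2d) + O(n/d^2)$, completing the construction.

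The main obstacle is precisely this concentration step: the allowed slack in the target multiplicity, $O(n/d^2)$, is asymptotically smaller than the natural Chernoff fluctuation $\sqrt{\mathbb{E}[X_e]\log(dn)} = \Theta(\sqrt{(n/d)\log(dn)})$, and balancing these two scales is what forces the hypothesis $n > 10 d^3 \log d$. Everything else -- existence of the local partitions, $\mathrm{Sym}(W_i)$-symmetry of the sampling, the matching bound, and the union bound -- is essentially routine once the right vertex partition into $(2d+1)$-gadgets is in place.
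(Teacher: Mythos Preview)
Your proof is correct and follows essentially the same construction as the paper: both partition the vertex set into $(n-1)/d$ blocks of size $2d+1$ and assign each colour a random triangle-plus-matching partition on each block, then use Chernoff plus a union bound to control the multiplicity. The only difference is that the paper takes \emph{identical} copies of a single random gadget $H$ (so its union bound runs over only $\binom{2d+1}{2}=O(d^2)$ edges rather than your $O(dn)$), which makes the condition $n>10d^3\log d$ slightly cleaner to verify; your independent-sampling variant still goes through once the implied constant in $O(n/d^2)$ is chosen suitably.
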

\begin{proof}
Let $H$ be a $n$-edge-coloured multigraph on the vertex set $\{1, \ldots, 2d+1\}$ constructed by doing the following: independently for each colour $c$, pick uniformly at random a spanning subgraph $H_c \subseteq H$ consisting of a disjoint union of $d-1$ edges and one triangle; set the edges of colour $c$ to be the edges of $H_c$. Note that the multiplicity of each edge $e \in H$ behaves like a $\textsc{Bin}(n,p)$ random variable with $p := \frac{d+2}{d(2d+1)}$. Therefore, since there are $O(d^2)$ edges, \textit{whp} the multiplicity of $H$ is at most $n/2d + O\left(n/d^2\right)$. Now, let $G$ be the disjoint union of $\frac{n-1}{d}$ copies of $H$. Since $|H| = 2d+1$, it has no matching of size $d+1$ and thus, $G$ has no matching of size $n$. Moreover, by construction of $H$, each colour has at least $\frac{n-1}{d} \cdot (2d+1)$ vertices in its colour class.
\end{proof}
Note that for multiplicity $m = \eps n$ with $\eps > n^{-1/3 + o(1)}$, this gives an example of $(n,2n+2\eps n-O(\eps^2 n),\eps n)$-multigraphs without the desired rainbow matching. This shows that the error term $2m$ in Theorem \ref{simplegrinblatthm} is asymptotically tight. 
\section{Concluding remarks}
In this paper we obtained improved bounds for a wide variety of rainbow matching problems, resolving several conjectures. For this, we introduced an effective method for proving strong asymptotic results when their weak versions are known. The most natural open problem is to obtain even better error terms in all the problems considered here.

For the Aharoni-Berger conjecture(s), we now have polynomial bounds on the error term in both the strong and weak asymptotic versions. This is quite far from the best bounds in Ryser's conjecture, the main problem motivating the Aharoni-Berger conjecture, where we know (see \cite{keevash2020new}) how to find rainbow matchings of size $n-O(\log n/\log\log n)$. It would then be interesting to prove sub-polynomial bounds for the Aharoni-Berger conjecture(s) as well as for the other problems considered in this paper. Note that our sampling trick currently requires error terms to be polynomial. Thus, having a weak asymptotic result with sub-polynomial error term will not immediately imply a sub-polynomial error term for the corresponding strong asymptotic version.

For Alspach's conjecture, we can actually prove a sub-polynomial bound in the weak asymptotic.
\begin{prop}
Let $G$ be a $2$-factorized graph with $n$ colours. Then there is a rainbow matching of size $n-O(\log n / \log\log n)$.
\end{prop}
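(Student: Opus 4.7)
The plan is to adapt the iterated switching argument of Keevash, Pokrovskiy, Sudakov, and Yepremyan~\cite{keevash2020new}, which established the analogous $n - O(\log n / \log\log n)$ bound for Ryser's conjecture (rainbow matchings in $1$-factorizations of $K_{n,n}$). Assume for contradiction that a maximum rainbow matching $M$ has size $n - k$ with $k \geq C\log n / \log\log n$ for a sufficiently large absolute constant $C$. Let $C_0$ be the set of $k$ unused colors and $V_0 = V(G) \setminus V(M)$; since $G$ is $2n$-regular we have $|V_0| \geq 2k$. The starting structural observation is that for every $c \in C_0$, the $c$-coloured $2$-factor contains no edge inside $V_0$ (else $M$ extends), while every vertex of $V_0$ has degree $2$ in that $2$-factor, so each $c \in C_0$ contributes at least $2|V_0| \geq 4k$ edges from $V_0$ into $V(M)$.

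The basic move is a \emph{shift}: given $u \in V_0$, $v \in V(M)$ with $c(uv) = c \in C_0$, replace $vm(v) \in M$ by $uv$. The resulting rainbow matching $M'$ has the same size, and the new $V_0'$, $C_0'$ (obtained by swapping $u$ for $m(v)$ and the color $c$ for $c(vm(v))$) have the same sizes as $V_0$, $C_0$; crucially, the $4k$-edges-per-color lower bound persists along every shift sequence. If at any step of this process the current unused color set contains a color with an edge inside the current $V_0$, then $M$ was not maximum. The plan is to build a rooted tree $\mathcal{T}$ of shift sequences of depth $t = \Theta(\log n / \log\log n)$: at each internal node, after subtracting the $O(t)$ colors and vertices touched along the path from the root, one may branch on a triple $(c,u,v)$ with branching factor at least $(k-O(t))(4k-O(t)) \geq k^2$, giving $\Omega(k^{2t})$ leaves. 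The goal, following \cite{keevash2020new}, is to show that every leaf admits a short ``signature'' (essentially an ordered list of $O(t)$ colors together with a small amount of structural auxiliary data) which determines the leaf up to a controlled number of reconstructions; a pigeonhole/probabilistic analysis then forces two leaves to coincide in a way that produces a rainbow matching with an available augmenting edge, contradicting the maximality of $M$.

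The main obstacle is transferring the signature/reconstruction step from the rigid bipartite $1$-factorization setting of \cite{keevash2020new} to the present $2$-factor setting. Each color class is now a disjoint union of cycles rather than a perfect matching, so a vertex of $V(M)$ may be incident to two edges of the same color, and a careless shift can create short alternating detours along a $2$-factor that spoil the reconstruction. The fix is to restrict attention to a class of ``rigid'' shifts — for instance, forbidding immediate backtracking along a color's $2$-factor and forbidding the reuse of any vertex or color from the current signature — so that the shift history is recoverable from the signature while the branching factor at each step remains $\Omega(k^2)$. Provided this can be arranged (which I expect to be a routine but careful modification of the combinatorial bookkeeping in \cite{keevash2020new}), the depth-$t$ calculation goes through and yields a rainbow matching of size $n - O(\log n/\log\log n)$.
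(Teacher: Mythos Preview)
Your proposal is a plan rather than a proof, and the step you flag as ``routine'' is precisely where the difficulty lies. The paper takes a completely different route that avoids this difficulty entirely.

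The paper's argument is a \emph{reduction} to the bipartite setting of~\cite{keevash2020new}, not an adaptation of its internals. First, Theorem~\ref{alspachthm} (the strong asymptotic for Alspach's conjecture, proved earlier in the paper) is invoked to assume $N:=|G|\le 2n(1+n^{-0.24})$, so $G$ is essentially $K_{2n}$. Then each monochromatic $2$-factor is randomly oriented into directed cycles, $V(G)$ is randomly split into $X\cup Y$, and only edges directed from $X$ to $Y$ are kept. The resulting bipartite graph $H$ is \emph{properly} coloured (each colour is now a matching), and standard concentration gives $d_H(v)=n/2\pm n^{1-\eps}$, $d_H(u,v)\le n/4+n^{1-\eps}$, $e_H(c)=n/2\pm n^{1-\eps}$, and the appropriate colour-pair codegrees. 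At this point Corollary~4.6 of~\cite{keevash2020new} applies as a black box to yield a rainbow matching of size $n-O(\log n/\log\log n)$.

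Your approach instead tries to rerun the switching/signature machinery directly on the $2$-factorization. The gap is real: the reconstruction step in~\cite{keevash2020new} relies on each vertex having a \emph{unique} edge of each colour, so that a signature of $O(t)$ colours determines the shift path. In your setting every vertex has two edges of each colour, so decoding a signature incurs a factor of $2$ at every step, and after depth $t$ the number of reconstructions per signature is multiplied by $2^{\Theta(t)}$; this has to be balanced against your branching factor, and you have not shown the arithmetic survives. Your proposed ``rigid shift'' restriction would have to simultaneously kill this ambiguity and preserve $\Omega(k^2)$ branching, and it is not clear that both can be achieved. Moreover, the cycle-closing and codegree arguments in~\cite{keevash2020new} use the bipartite structure in a nontrivial way; you give no indication of how to handle odd alternating closed walks that arise in the non-bipartite setting. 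Finally, you never reduce to $|G|\approx 2n$, so you would also need to control the case where $|V_0|$ is a constant fraction of $n$. The paper's random-orientation trick sidesteps all of these issues at once.
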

\begin{proof}[  sketch]
By Theorem~\ref{alspachthm}, we can assume that $N:=|G|\leq 2n(1+n^{-0.24})$. This means that $G$ is essentially a complete graph. Let the vertices of $G$ be $v_1, \dots, v_N$. Randomly orient the graph so that each colour is a union of directed cycles (for each cycle choosing its direction independently). Randomly partition $V(G)$ into two sets $X,Y$ with each vertex ending up in each set with probability $1/2$. For each colour $c$ we 
delete all edges which aren't directed from $X$ to $Y$.  Call the resulting graph $H$. 

Using standard probabilistic arguments, one can show that for some $\eps>0$, with positive probability $n\leq |X|,|Y|\leq n+n^{1-\eps}$, all vertices have $d_H(v)=n/2\pm n^{1-\eps}$, $d_H(u,v)\leq n/4+ n^{1-\eps}$, and all colours $c,c'$ have $e_H(c)=n/2\pm n^{1-\eps}$ and $a_H(c,c'), b_H(c,c')= n/4+n^{1-\eps}$ (where $a_H(c,c')/b_H(c,c')$ denote the number of vertices in $A/B$ incident to edges of both colours $c$ and $c'$ in $H$).
 Next we apply Corollary~4.6 from~\cite{keevash2020new} which essentially says that graphs with these properties contain a rainbow matching of size $n-O(\log n/\log \log n)$ (actually Corollary~4.6 has the slightly stronger assumption $d_H(u,v)= n/4\pm n^{1-\eps}$ on the graph; however it can be checked that this assumption is never fully used in the proof, and just the upper bound suffices on these quantities). 
\end{proof}
 
\vspace{0.4cm}
\noindent
{\bf Acknowledgements.} The authors would like to thank He Guo for carefully reading the manuscript and pointing out some inaccurate details.

\end{document}